\documentclass[11pt,reqno]{amsart}
\usepackage[all]{xy}
\usepackage{amsfonts,amsmath,amssymb,amscd}
\usepackage{color}
\usepackage[pagebackref,colorlinks, linkcolor=blue, citecolor=red]{hyperref}
\usepackage{caption}
\usepackage{hyperref}
\usepackage{mathtools}

\usepackage[top=1in, bottom=1in, left=1in, right=1in]{geometry}
\usepackage{geometry}
\usepackage{tikz}
\usetikzlibrary{positioning}
\usetikzlibrary{arrows}
\usetikzlibrary{snakes}
\usepackage{graphicx} 



\def\opn#1#2{\def#1{\operatorname{#2}}} 
\opn{\cone}{cone}
\opn{\PF}{PF}
\opn{\F}{F}
\opn{\RF}{RF}
\opn{\Ap}{Ap}

\numberwithin{equation}{section}

\newtheorem{theorem}{Theorem}[section]
\newtheorem{proposition}[theorem]{Proposition}

\newtheorem{lemma}[theorem]{Lemma}
\newtheorem{coro}[theorem]{Corollary}
\newtheorem{definition}[theorem]{Definition}

\newtheorem{remark}[theorem]{Remark}
\newtheorem{example}[theorem]{Example}
\newtheorem{conjecture}[theorem]{Conjecture}

\newtheorem{question}[theorem]{Question}

\title{Affine semigroups of maximal projective dimension-II}

\address{IIT Gandhinagar, Palaj, Gandhinagar, Gujarat-382355 India}

\author{Om Prakash Bhardwaj}
\email{om.prakash@iitgn.ac.in}

\author{Indranath Sengupta}
\email{indranathsg@iitgn.ac.in} 

\thanks{2010 Mathematics Subject Classification: 13A02, 13D02, 20M14, 20M25.}

\thanks{Keywords: Maximal projective dimension semigroups, Pseudo-Frobenius elements, Betti-type, $\prec$-almost symmetric semigroups.}
\thanks{} 

\begin{document}

\begin{abstract}
If the Krull dimension of the semigroup ring is greater than one, then affine semigroups of maximal projective dimension ($\mathrm{MPD}$) are not Cohen-Macaulay, but they may be Buchsbaum. We give a necessary and sufficient condition for simplicial $\mathrm{MPD}$-semigroups to be Buchsbaum in terms of pseudo-Frobenius elements. We give certain characterizations of $\prec$-almost symmetric $\mathcal{C}$-semigroups. When the cone is full, we prove the irreducible $\mathcal{C}$-semigroups, and $\prec$-almost symmetric $\mathcal{C}$-semigroups with Betti-type three satisfy the extended Wilf's conjecture. For $e \geq 4$, we give a class of MPD-semigroups in $\mathbb{N}^2$ such that there is no upper bound on the Betti-type in terms of embedding dimension $e$. Thus, the Betti-type may not be a bounded function of the embedding dimension. We further explore the submonoids of $\mathbb{N}^d$, which satisfy the Arf property.
\end{abstract}

\maketitle

\section{Introduction}
Throughout this article, the sets of integers and non-negative integers will be denoted by $\mathbb{Z}$ and $\mathbb{N}$, respectively. By an affine semigroup $S$, we mean a finitely generated submonoid of $\mathbb{N}^d$ for some positive integer $d.$ If $S$ is minimally generated by $ a_1, \ldots, a_n $ then $\{a_1, \ldots, a_n \}$ is called the minimal system of generators of $S$. It is known that an affine semigroup $S$ has a unique minimal set of generators. The cardinality of this set is called the embedding dimension of $S$ and is denoted by $e(S)$. If $d =1$, then $S$ is a submonoid of $\mathbb{N}$. Dividing all elements of $S$ by the greatest common divisor of its nonzero elements, we obtain a numerical semigroup. Equivalently, a numerical semigroup $S$ is a submonoid of $\mathbb{N}$ such that $\mathbb{N} \setminus S$ is finite. If $S \neq \mathbb{N},$, then the largest integer not belonging to $S$ is known as the Frobenius number of $S,$ denoted by $\F(S).$ Also, the finiteness of $\mathbb{N} \setminus S$ implies that there exists at least one element $f \in \mathbb{N} \setminus S$ such that $f + (S \setminus \{0\}) \subseteq S.$ These elements are called pseudo-Frobenius elements of the numerical semigroup $S$. The set of pseudo-Frobenius elements is denoted by $\PF(S).$ In particular, $\F(S)$ is a pseudo-Frobenius number. However, for affine semigroups in $\mathbb{N}^d$, the existence of such elements is not always guaranteed. In \cite{pfelements}, the authors consider the complement of the affine semigroup in its rational polyhedral cone and study of pseudo-Frobenius elements in affine semigroups over $\mathbb{N}^d$.

\medskip
Let $k$ be a field. The semigroup ring $k[S]$ of $S$ is a $k$-subalgebra of the polynomial ring $k[t_1,\ldots,t_d].$ In other words, $k[S] = k[{\bf t}^{a_1}, \ldots, {\bf t}^{a_n}]$, where ${\bf t}^{a_i} = t_1^{a_{i1}} \cdots t_d^{a_{id}}$ for $a_i = (a_{i1},\ldots,a_{id})$ and for all $i=1,\ldots,n.$ In \cite{pfelements}, the authors prove that an affine semigroup $S$ has pseudo-Frobenius elements if and only if the length of the graded minimal free resolution of the corresponding semigroup ring $k[S]$ is maximal. Affine semigroups having pseudo-Frobenius elements are called maximal projective dimension ($\mathrm{MPD}$) semigroups. We call the cardinality of the set of pseudo-Frobenius elements the Betti-type of $S,$ and it is denoted by $\beta$-$\mathrm{t}(S).$ Note that this is not the Cohen-Macaulay type of $S$ since $\mathrm{MPD}$-semigroups, when $\mathrm{dim}(k[S]) \geq 2,$ are not Cohen-Macaulay. 

\medskip
Let $S$ be an $\mathrm{MPD}$-semigroup and let $\mathrm{cone}(S)$ denote the rational polyhedral cone of $S.$ Set $\mathcal{H}(S) = (\mathrm{cone}(S) \setminus S) \cap \mathbb{N}^d.$ For a fixed term order $\prec$ on $\mathbb{N}^d$, we define the Frobenius element as $\F(S)_{\substack{\\\prec}} = \max_{\substack{\\\prec}} \mathcal{H}(S).$ Note that in the case of $\mathrm{MPD}$-semigroups, Frobenius elements may not always exist, with respect to any term order. However, if there is a term order $\prec$ such that $\F(S)_{\substack{\\\prec}}$ exists, then in \cite{op} (also see \cite{opfpsac}), the authors extend the notions of symmetric, pseudo-symmetric, and almost symmetric numerical semigroups to the $\mathrm{MPD}$-semigroups with respect to a term order. In \cite{op}, they studied $\prec$-symmetric and $\prec$-pseudo-symmetric $\mathrm{MPD}$-semigroups. In this article, we study $\prec$-almost symmetric $\mathrm{MPD}$-semigroups. Put $\mathrm{PF}_{\substack{\\\prec}}' (S) = \mathrm{PF}(S) \setminus \{ \mathrm{F}(S)_{\substack{\\\prec}} \}.$ If $\mathrm{PF}_{\substack{\\\prec}}' (S) \neq \emptyset$ and if for any $g \in \mathrm{PF}_{\substack{\\\prec}}' (S)$, $\mathrm{F}(S)_{\substack{\\\prec}} - g \in \mathrm{PF}_{\substack{\\\prec}}' (S)$, we say that $S$ is a $\prec$-almost symmetric $\mathrm{MPD}$-semigroup. Let $S$ be an $\mathrm{MPD}$-semigroup, $T \subseteq S$ is called an ideal of $S$ if $S+T \subseteq T$. If $\mathrm{F}(S)_{\prec}$ exists, then we define $\Omega_{\prec} = \{z \in \mathrm{cone}(S) \cap \mathbb{N}^d \mid \mathrm{F}(S)_{\prec} - z \notin S\}$. Then $S \subseteq \Omega_{\prec} \subseteq \mathbb{N}^d$, and $\Omega_{\prec}$ is an ideal of $S$. In \cite{barucci-froberg}, Barucci and Fr\"{o}berg gave a characterization of almost symmetric numerical semigroups in terms of canonical semigroup ideal. We give a similar characterization for $\prec$-almost symmetric $\mathrm{MPD}$-semigroups in terms of the ideal $\Omega_{\prec}$.

\medskip
If $\mathcal{H}(S)$ is finite then the affine semigroup $S$ is called $\mathcal{C}$-semigroup. Note that $\mathcal{C}$-semigropus are $\mathrm{MPD}$-semigroups. In 1978, Wilf \cite{wilf}, proposed a conjecture related to the Diophantine Frobenius Problem that claims that the inequality 
\[ \F(S)+1 \leq e(S) \cdot |\{ s \in S \mid s < \F(S) \}| \] 
is true for every numerical semigroup. While this conjecture remains open, a potential extension of Wilf's conjecture to affine semigroups is studied in \cite{Wilfconjecture}. When the cone is full, we prove that $\prec$-almost symmetric $\mathcal{C}$-semigroups with Betti-type three satisfy the extended Wilf's conjecture.

\medskip
If the dimension of a $\mathrm{MPD}$-semigroup ring is one then it is isomorphic to the coordinate ring of a monomial curve defined by a numerical semigroup. In this case, the semigroup ring is always Cohen-Macaulay. If the dimension of a $\mathrm{MPD}$-semigroup ring is greater than one, then it is never Cohen-Macaulay. The Buchsbaum property is the next good thing to study for these semigroup rings. Let $(A,\mathbf{m})$ be a local ring. A system of elements $x_1, \ldots, x_s$ of $A$ is called a weak-regular sequence if $\mathbf{m}(x_1,\ldots,x_{i-1}) : x_i \subseteq (x_1,\ldots,x_{i-1})$, for $i = 1, \ldots,s$. Let $d = \mathrm{dim}(A)$,  system of $d$ elements $x_1,\ldots,x_d$ of A is called a system of parameters of $A$ if the ideal $(x_1,\ldots,x_d)$ is an $\mathbf{m}$-primary ideal. The local ring $A$ is called Buchsbaum if every system of parameters of $A$ is a weak-regular sequence. Suppose $A$ is a finitely generated homogeneous algebra over a field, and $\mathbf{m}$ is its maximal homogeneous ideal. In that case, we call $A$ a Buchsbaum ring if the local ring of $A$ at $\mathbf{m}$ is Buchsbaum. Based on the criteria of Trung \cite{trung}, we give a necessary and sufficient condition for simplicial affine $\mathrm{MPD}$-semigroups to be Buchsbaum in terms of pseudo-Frobenius elements.

\medskip
Now we summarize the contents of the paper. Unless and otherwise stated, $S$ denotes an $\mathrm{MPD}$-semigroup in $\mathbb{N}^d,$ minimally generated by elements $a_1,\ldots,a_n \in \mathbb{N}^d.$ Section 2 begins with a recollection of some basic definitions and results. Let $S$ be an $\mathrm{MPD}$-semigroup and the dimension of the semigroup ring $k[S]$ is greater than one then it is never Cohen-Macaulay, but it may be Buchsbaum. In section 3, we study the Buchsbaum property of $\mathrm{MPD}$-semigroup rings. In Proposition \ref{buchsbaummpd}, we give a necessary and sufficient condition for a simplicial $\mathrm{MPD}$-semigroup to be Buchsbaum. Due to the non-Cohen-Macaulayness of $\mathrm{MPD}$-semigroups, we deduce in Proposition \ref{noncm} that the associated graded ring of the ring of differential operators of a numerical semigroup ring is never Cohen-Macaulay. In section 4, we explore the $\prec$-almost symmetric $\mathrm{MPD}$-semigroups. When $\mathcal{H}(S)$ is finite, $S$ is known as $\mathcal{C}$-semigroups. In Proposition \ref{almostsymmetric} and \ref{almostsymmetriciff}, we give a characterization of $\prec$-almost symmetric $\mathcal{C}$-semigroups. A different
characterization of these semigroups is also given in Proposition \ref{cardH(S)} and \ref{cardhSiff} under suitable conditions. We define $\Omega_{\prec} = \{z \in \mathrm{cone}(S) \cap \mathbb{N}^d \mid \mathrm{F}(S)_{\prec} - z \notin S\}$. Then $S \subseteq \Omega_{\prec} \subseteq \mathbb{N}^d$, and $\Omega_{\prec}$ is an ideal of $S$. We study the $\prec$-symmetric, $\prec$-pseudo-symmetric, and $\prec$-almost symmetric $\mathcal{C}$-semigroups in terms of $\Omega_\prec$. Under suitable conditions, characterizations and equivalent conditions for $\prec$-almost symmetric $\mathcal{C}$-semigroups are given in Proposition \ref{almostsymmfirst} and \ref{almostsymmsecond}. In section 5, we study the extended Wilf's conjecture for $\prec$-almost symmetric $\mathcal{C}$-semigroups. When the cone is full, in Proposition \ref{wilfssufficient} and Corollary \ref{wilfsuffcor}, we give sufficient conditions for $\prec$-almost symmetric $\mathcal{C}$-semigroups to satisfy extended Wilf's conjecture. Using these sufficient conditions and characterizations in section 4, we prove in Theorem \ref{wilfbettithree} that $\prec$-almost symmetric $\mathcal{C}$-semigroups satisfy the extended Wilf's conjecture when the Betti-type is three. In section 6, we study the unboundedness of Betti-type of $\mathrm{MPD}$-semigroups in terms of embedding dimension with the help of a class of $\mathrm{MPD}$-semigroups in $\mathbb{N}^2$. Let $a \geq 3$ be an odd natural number and $p \in \mathbb{Z}^+$. We define
\[
S_{a,p} = \langle (a,0),(0,a^p),(a+2,2),(2,2+a^p) \rangle.
\]
$S_{a,p}$ is a simplicial affine semigroup in $\mathbb{N}^2$. In Proposition \ref{unboundedbettitype}, we prove that the Betti-type of $S_{a,p}$ is greater than or equal to $a^p-1$. Note that the embedding dimension of $S_{a,p}$ is $4$, but the Betti-type is unbounded. Using the technique of gluing, in remark \ref{remarkunbdd}, we give a class of $\mathrm{MPD}$-semigroups in $\mathbb{N}^2$ of each embedding dimension $e \geq 4$ such that there is no upper bound on the Betti-type in terms of the embedding dimension. Thus, we conclude that for $\mathrm{MPD}$-semigroups, the Betti-type may not be a bounded function of the embedding dimension. In the last section, we study the submonoids of $\mathbb{N}^d$, which satisfy the Arf property. In Proposition \ref{a+arf} and \ref{7.10}, we generalize the results of Arf numerical semigroups to Arf submonoids. In Corollary \ref{arf=pi}, we prove that Arf submonoids containing multiplicity are $\mathrm{PI}$-monoids.

\section{Preliminaries}
Let $S$ be a finitely generated submonoid of $\mathbb{N}^d$, say minimally generated by $\{a_1, \ldots, a_n\} \subseteq \mathbb{N}^d.$ Such submonoids are called affine semigroups. Consider the cone of $S$ in $\mathbb{Q}^d_{\geq 0}$,
\[  \mathrm{cone}(S) := \left\lbrace \sum_{i=1}^n \lambda_i a_i  \mid \lambda_i \in \mathbb{Q}_{\geq 0}, i = 1, \ldots, n \right\rbrace \] 
and set $\mathcal{H}(S) := (\mathrm{cone}(S) \setminus S) \cap \mathbb{N}^d.$

It is known that a finitely generated submonoid of $\mathbb{N}^d$ has a unique minimal set of generators. This set is also known as the Hilbert basis of $S$ (see \cite[Definition 7.17]{millersturmfels}).

\begin{definition} \label{pf}
	An element $f \in \mathcal{H}(S)$ is called a pseudo-Frobenius element of $S$ if
	$f + s \in S$ for all $s \in S \setminus \{0\}.$ The set of pseudo-Frobenius elements of $S$ is denoted by $\mathrm{PF}(S).$ In particular,
	\[ \mathrm{PF}(S) = \{ f \in \mathcal{H}(S) \mid f + a_j \in S, \ \forall j \in [1,n] \}.  \]  
\end{definition}

If the $\mathrm{cone}(S)$ is $\mathbb{Q}_{\geq 0}^d$ for any submonoid $S$ of $\mathbb{N}^d$, then we say $S$ has full cone. Observe that the set $\mathrm{PF}(S)$ may be empty, but if $\mathcal{H}(S)$ is finite then the existence of pseudo-Frobenius elements is guaranteed. Moreover, we have

\begin{coro}[{\cite[Corollary 2.15]{op}}]
Let $\mathcal{H}(S)$ be a non-empty finite set and $x \in \mathrm{cone}(S)$. Then $x \in \mathcal{H}(S)$ if and only if $f-x \in S$ for some $f \in \mathrm{PF(S)}$.
\end{coro}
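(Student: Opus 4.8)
The plan is to prove the two implications separately, and to observe that the backward (``if'') direction is nearly immediate. Suppose $f - x \in S$ for some $f \in \mathrm{PF}(S)$. Since $f \in \mathcal{H}(S) \subseteq \mathbb{N}^d$ and $f - x \in S \subseteq \mathbb{N}^d$, the vector $x = f - (f-x)$ lies in $\mathbb{Z}^d$; combined with the hypothesis $x \in \mathrm{cone}(S) \subseteq \mathbb{Q}^d_{\geq 0}$, this forces $x \in \mathbb{N}^d$. If $x$ were in $S$, then $f = x + (f-x)$ would be a sum of two elements of $S$ and hence lie in $S$, contradicting $f \in \mathcal{H}(S)$. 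Therefore $x \notin S$, and since $x \in \mathrm{cone}(S) \cap \mathbb{N}^d$ we conclude $x \in \mathcal{H}(S)$. Note this direction uses only that $f \notin S$, not the full pseudo-Frobenius property.

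For the forward (``only if'') direction I would run a maximality argument that exploits the finiteness of $\mathcal{H}(S)$. Given $x \in \mathcal{H}(S)$, consider the set
\[
A = \{\, h \in \mathcal{H}(S) \mid h - x \in S \,\},
\]
using the convention $0 \in S$. This set is non-empty, since $x - x = 0 \in S$ shows $x \in A$, and it is finite because $A \subseteq \mathcal{H}(S)$. Pick $f$ to be a maximal element of $A$ with respect to the componentwise partial order $\le$ on $\mathbb{N}^d$; such an $f$ exists precisely because $A$ is a non-empty finite set. By construction $f - x \in S$, so the only thing left to verify is that $f \in \mathrm{PF}(S)$.

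The crux of the argument, and the step I expect to be the main obstacle, is verifying that this maximal $f$ is genuinely pseudo-Frobenius, i.e. that $f + s \in S$ for every $s \in S \setminus \{0\}$. I would argue by contradiction: if $f + s \notin S$ for some $s \in S \setminus \{0\}$, then since $f \in \mathrm{cone}(S)$ and $s \in S \subseteq \mathrm{cone}(S)$ we have $f + s \in \mathrm{cone}(S) \cap \mathbb{N}^d$, and being outside $S$ it lies in $\mathcal{H}(S)$. Moreover $(f+s) - x = (f-x) + s \in S$, so in fact $f + s \in A$. But $s \in \mathbb{N}^d \setminus \{0\}$ forces $f + s$ to dominate $f$ strictly in the componentwise order, contradicting the maximality of $f$ in $A$. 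Hence $f \in \mathrm{PF}(S)$ and $f - x \in S$, which completes the proof. The subtlety to keep in mind is that the finiteness of $\mathcal{H}(S)$ is exactly what supplies a maximal element and keeps $A$ finite; without it one would have to locate a maximal hole via Dickson's lemma or a term order, and the clean contradiction above could fail.
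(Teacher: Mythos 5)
Your proof is correct. Note that the paper does not actually prove this statement --- it is imported verbatim as \cite[Corollary 2.15]{op} --- so there is no in-paper argument to compare against; your two-step argument (the easy ``if'' direction using only $f \notin S$, and the ``only if'' direction by taking a maximal element of $A = \{h \in \mathcal{H}(S) \mid h - x \in S\}$ under the componentwise order and showing maximality forces the pseudo-Frobenius property) is the standard and expected route, and every step checks out, including the use of finiteness of $\mathcal{H}(S)$ to guarantee a maximal element of $A$.
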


Recall that a term order (also known as monomial ordering) on $\mathbb{N}^d$ is a total order compatible with the addition of $\mathbb{N}^d.$

\begin{definition}
	Let $\prec$ be a term order on $\mathbb{N}^d.$ Then $\mathrm{F}(S)_{\substack{\\\prec}} = \max_{\substack{\\\prec}} \mathcal{H}(S)$, if it exists, is called the Frobenius element of $S$ with respect to the term order $\prec.$
\end{definition}

Note that Frobenius elements may not exist. However, if $| \mathcal{H}(S) | < \infty$, then Frobenius elements do exist. Also, from \cite[Lemma 12]{pfelements}, we have that every Frobenius element is a pseudo-Frobenius element.

Let $k$ be a field. The semigroup ring $k[S]$ of $S$ is a $k$-subalgebra of the polynomial ring $k[t_1,\ldots,t_d].$ In other words, $k[S] = k[{\bf t}^{a_1}, \ldots, {\bf t}^{a_n}]$, where ${\bf t}^{a_i} = t_1^{a_{i1}} \cdots t_d^{a_{id}}$ for $a_i = (a_{i1},\ldots,a_{id})$ and for all $i=1,\ldots,n.$ Set $R = k[x_1,\ldots,x_n]$ and define a map $\pi : R \rightarrow k[S]$ given by $\pi(x_i) = {\bf t}^{a_i}$ for all $i=1,\ldots,n.$ Set $\deg x_i = a_i$ for all $i=1,\ldots,n.$ Observe that $R$ is a multi-graded ring and that $\pi$ is a degree preserving surjective $k$-algebra homomorphism. We denote by $I_S$ the kernel of $\pi.$ Then $I_S$ is a homogeneous ideal generated by binomials, called the defining ideal of $S$. Note that a binomial $\phi = \prod_{i=1}^n x_i^{\alpha_i} - \prod_{j=1}^n x_j^{\beta_j} \in I_S$ if and only if $\sum_{i=1}^n \alpha_i a_i = \sum_{j=1}^n \beta_j a_j.$ With respect to this grading, $\deg \phi = \sum_{i=1}^n \alpha_i a_i.$

\begin{definition}
	An affine semigroup $S$ is called {\bf maximal projective dimension ($\mathrm{MPD}$)} semigroup if $\mathrm{pdim}_R k[S] = n-1.$ Equivalently, $\mathrm{depth}_R k[S]=1.$
\end{definition}

In \cite[Theorem 6]{pfelements}, the authors proved that $S$ is an $\mathrm{MPD}$-semigroup if and only if $\mathrm{PF}(S) \neq \emptyset$. In particular, they prove that if $S$ is an $\mathrm{MPD}$-semigroup then $b \in S$ is the $S$-degree of the $(n-2)$th minimal syzygy of $k[S]$ if and only if $ b \in \left\lbrace a + \sum_{i=1}^n a_i  \mid a \in \mathrm{PF}(S)\right\rbrace$. Moreover, $\mathrm{PF}(S)$ has finite cardinality.

\begin{definition}
If $\mathcal{H}(S)$ is a non-empty finite set, then $S$ is said to be a $\mathcal{C}$-semigroup, where $\mathcal{C}$ denotes the cone of the semigroup. Note that $\mathcal{C}$-semigroups are $\mathrm{MPD}$-semigroups.
\end{definition}

\begin{example}
{ \rm Let $S$ be the submonoid of $\mathbb{N}^2$ generated by the elements
\[  
\{(0, 1), (3, 0), (4, 0), (1, 4), (5, 0), (2, 7)\}.
\]
Observe that
\[
\mathcal{H}(S) = \{(1, 0), (2, 0), (1, 1), (2, 1), (1, 2), (2, 2), (1, 3), (2, 3), (2, 4), (2, 5), (2, 6) \}.
\]
Thus, we have $\mathrm{PF}(S) = \{(1,3), (2,6)\}$. On the other hand, we obtain by Macaulay2 \cite{M2}, that the graded minimal free resolution of $k[S]$ as a module over $R=k[x_1,\ldots,x_6]$ module is given by
\[
0 \longrightarrow R(-(17,18))\oplus R(-(16,15)) \longrightarrow R^{12} \longrightarrow R^{27} \longrightarrow R^{28} \longrightarrow R^{12} \longrightarrow R \longrightarrow k[S] \longrightarrow 0.
\]
Therefore, the degrees of minimal generators of the fifth syzygy module are $(16, 15), (17, 18)$. Thus, we have
\[
\mathrm{PF}(S) = \{(16,15)-(15,12), (17,18)-(15,12)\}= \{(1,3),(2,6)\}.
\]}
\end{example}

The cardinality of $\mathrm{PF}(S)$ is equal to the last Betti number of $k[S]$ as a module over $R$. For Cohen-Macaulay modules, the last Betti number is known as the Cohen-Macaulay type of module. Note that the semigroup ring corresponding to a $\mathrm{MPD}$-semigroup is not Cohen-Macaulay unless it is the coordinate ring of a monomial curve. Motivated by this, we call the cardinality of $\mathrm{PF}(S)$, the Betti-type of $S$.

\section{Buchsbaumness of simplicial MPD-semigroups}
\medskip

\begin{definition}
Let $S$ be an affine semigroup generated by $ \{a_1,\ldots,a_n \} \subseteq \mathbb{N}^d$. Then $S$ is called simplicial if there exist $a_{i_1},\ldots, a_{i_d} \in \{ a_1,\ldots,a_n\}$ such that
\begin{enumerate}
\item  $a_{i_1},\ldots, a_{i_d}$ are linearly independent over $\mathbb{Q}$,

\item  for each $a \in S$, there exist $0 \neq \alpha \in \mathbb{N}$ such that $\alpha a = \sum_{j=1}^d \lambda_j a_{i_j}$, where $\lambda_j \in \mathbb{N}$.
\end{enumerate}

The elements $a_{i_1},\ldots, a_{i_d}$ are known as the extremal rays of $S$.
\end{definition}

Let $S$ be a simplicial affine semigroup minimally generated by  $\{ a_1, \ldots,a_d,a_{d+1},\ldots, a_n \} \subseteq \mathbb{N}^d$, and let $G(S)$ be the group generated by $S$ in $\mathbb{Z}^d$. Assume that $a_1, \ldots,a_d$ are the extremal rays of $S$. Define the set 
\begin{center}
$ \mathcal{D} = \{a \in G(S) \setminus S \mid a + 2a_i, a+2a_j \in S \quad \text{for some indices} \quad i \neq j, 1 \leq i,j \leq d \}.$
\end{center}

\begin{theorem}\cite[Theorem 5.3.2]{brunsaffinesemigroups}\label{Buchsbaumness}
Let $S$ be a simplicial affine semigroup with $\mathrm{rank}(G(S)) = d$. Let $a_1, \ldots,a_d$ are the vectors of $S$, which spans the cone of $S$. Then $k[S]$ is Buchsbaum iff 
\begin{center}
$  \{a \in G(S) \mid a + 2a_i, a+2a_j \in S \quad \text{for some indices} \quad i \neq j, 1 \leq i,j \leq d \} + \mathrm{Hilb}(S) \subseteq S$,
\end{center}
where $\mathrm{Hilb}(S)$ denotes the Hilbert basis of $S$.
\end{theorem}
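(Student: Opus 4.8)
The plan is to reduce Buchsbaumness to a standardness condition on the canonical homogeneous system of parameters and then to read that condition off the semigroup combinatorics. Write $A=k[S]$, let $\mathfrak m$ be its maximal graded ideal, and put $\theta_i=\mathbf t^{a_i}$ for $i=1,\dots,d$. Since $a_1,\dots,a_d$ span $\mathrm{cone}(S)$, the ring $A$ is module-finite over $k[\theta_1,\dots,\theta_d]$, so $\theta=(\theta_1,\dots,\theta_d)$ is a graded homogeneous system of parameters and $H^i_{\mathfrak m}(A)=H^i_{(\theta)}(A)$ for every $i$. First I would translate the two sets in the statement into ring-theoretic language. For $a\in G(S)$ the Laurent monomial $\mathbf t^a$ lives in the total quotient ring of $A$, and $a+2a_i\in S$ is equivalent to $\theta_i^2\,\mathbf t^a\in A$. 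Hence the displayed set equals $S\cup\mathcal D$; since $S+\mathrm{Hilb}(S)\subseteq S$ holds trivially, the asserted inclusion is equivalent to the single condition $\mathcal D+\mathrm{Hilb}(S)\subseteq S$, i.e. $a+a_k\in S$ for every $a\in\mathcal D$ and every generator $a_k$.

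Next I would invoke Trung's criterion \cite{trung} to reduce Buchsbaumness of the graded ring $A$ to the standardness of the squared system of parameters $(\theta_1^2,\dots,\theta_d^2)$; the passage to squares is exactly what upgrades the weak condition $\mathfrak m H^i_{\mathfrak m}(A)=0$ to the full Buchsbaum property, and it is the origin of the factor $2$ appearing in $\mathcal D$. I would then compute the relevant cohomology from the $\mathbb Z^d$-graded \v{C}ech complex $0\to A\to\bigoplus_i A_{\theta_i^2}\to\bigoplus_{i<j}A_{\theta_i^2\theta_j^2}\to\cdots$, using that $A_{\theta_i^2}$ has monomial $k$-basis indexed by $\{s\in G(S): s+2na_i\in S\text{ for some }n\ge 0\}$. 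In each fixed multidegree this is a finite complex of $k$-vector spaces, and the point is to identify the multidegrees $g$ in which a class below the top survives. Such a class must be supported on at least two of the localizations $A_{\theta_i^2},A_{\theta_j^2}$ with $g$ becoming an element of $S$ only after a shift in two distinct ray directions, precisely the requirement $g+2a_i,g+2a_j\in S$ with $g\notin S$. Thus I expect the surviving multidegrees to be exactly the elements of $\mathcal D$.

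With this dictionary, the standardness condition from Trung's criterion says that $\mathfrak m$ annihilates every surviving class, and multiplying the class in degree $g\in\mathcal D$ by a generator $\mathbf t^{a_k}$ produces the class in degree $g+a_k$, which vanishes exactly when $g+a_k\in S$. Ranging over all $g\in\mathcal D$ and all $a_k\in\mathrm{Hilb}(S)$ gives the equivalence of Buchsbaumness with $\mathcal D+\mathrm{Hilb}(S)\subseteq S$, which is the assertion. The main obstacle I anticipate is the combinatorial bookkeeping in the middle step: showing that the surviving multidegrees are cut out precisely by the two membership conditions $g+2a_i,g+2a_j\in S$ with $i\ne j$, neither by a single condition nor with the exponent $2$ replaced by $1$, requires careful tracking of the \v{C}ech differentials and of how the various $A_{\theta_i^2}$ overlap. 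The second delicate point, closing the gap between the quasi-Buchsbaum condition and genuine Buchsbaumness, is exactly where the passage to the squared system of parameters in Trung's criterion must be used rather than the bare statement $\mathfrak m H^i_{\mathfrak m}(A)=0$.
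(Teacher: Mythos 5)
This statement is quoted from \cite[Theorem 5.3.2]{brunsaffinesemigroups} and the paper offers no proof of it, so there is nothing internal to compare your sketch against; I can only assess the sketch on its own terms. Your overall strategy is the right one: pass to the homogeneous system of parameters $\theta_i=\mathbf t^{a_i}$, use Trung's criterion to test Buchsbaumness on the squared parameters $\theta_1^2,\dots,\theta_d^2\in\mathfrak m^2$, and translate the resulting ideal-theoretic conditions into membership statements in $S$. Your reduction of the displayed inclusion to $\mathcal D+\mathrm{Hilb}(S)\subseteq S$ is also correct.

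The genuine gap is in the middle step, and it is not mere bookkeeping. You claim that the multidegrees supporting $H^i_{\mathfrak m}(A)$ for $i<d$ are exactly the elements of $\mathcal D$, and you locate the origin of the exponent $2$ in the \v{C}ech complex on the $\theta_i^2$. But $A_{\theta_i^2}=A_{\theta_i}$: localization cannot see the exponent, and the monomial support of $A_{\theta_i}$ is $\{g\in G(S): g+na_i\in S\text{ for some }n\ge 0\}$. Consequently the graded support of the lower local cohomology is governed by conditions of the form ``$g+na_i\in S$ for some $n$,'' not ``$g+2a_i\in S$,'' and an element $g\notin S$ with $g+3a_i,\,g+3a_j\in S$ but $g+2a_i\notin S$ can carry cohomology while lying outside $\mathcal D$. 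So the dictionary ``surviving multidegrees $=\mathcal D$'' is false, and with it the final step of your argument. The exponent $2$ actually enters through the colon ideals in the weak-sequence/standardness condition: for instance $\bigl(\theta_i^2A:_A\theta_j^2\bigr)/\theta_i^2A$ has monomial basis $\{\mathbf t^{a+2a_i}\mid a\in G(S)\setminus S,\ a+2a_i\in S,\ a+2a_j\in S\}$, and requiring $\mathfrak m\bigl(\theta_i^2A:_A\theta_j^2\bigr)\subseteq\theta_i^2A$ is precisely the statement $a+a_k\in S$ for all generators $a_k$, i.e.\ $\mathcal D+\mathrm{Hilb}(S)\subseteq S$ restricted to pairs. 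A complete proof must (i) work with these colon conditions rather than with the graded support of $H^i_{\mathfrak m}(A)$, (ii) handle the analogous conditions for subsequences $(\theta_{i_1}^2,\dots,\theta_{i_{k-1}}^2):\theta_{i_k}^2$ of length $k\ge 3$ and show they follow from the pairwise ones, and (iii) invoke the precise form of Trung's theorem that upgrades standardness of this one s.o.p.\ in $\mathfrak m^2$ to the Buchsbaum property for all systems of parameters. As written, your sketch asserts the crucial identification rather than proving it, and the identification it asserts is not the correct one.
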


A simplicial MPD-semigroup in $\mathbb{N}^d$, where $d > 1$, is never Cohen-Macaulay. Since $S$ is simplicial, we have $\mathrm{dim}(k[S]) = d$. On the other hand, since $S$ is $\mathrm{MPD}$, we have $\mathrm{depth}(k[S]) = 1$. Due to the non-Cohen-Macaulayness of $\mathrm{MPD}$-semigroups, we observe that the associated graded ring of the ring of differential operators of a numerical semigroup ring is never Cohen-Macaulay.

\begin{definition}
Let $A$ be a ring. Let $D(A)$ denotes the ring of differential operators on $A$ and $D^p(A)$ denotes the set of differential operators of degree less than or equal to $p$. Then $D(A)$ has a filtration $\{D^p(A)\}$. The associated graded ring $\mathrm{Gr}~ D(A)$ of $D(A)$ is defined as
\[
\mathrm{Gr}~ D(A) = \oplus_p \mathrm{Gr}^p D(A) = \oplus_p D^p(A)/D^{p-1}(A).
\]
\end{definition}
\begin{proposition}\label{noncm}
Let $S$ be a numerical semigroup and $k[S]$ be the semigroup ring associated with $S$. Then $\mathrm{Gr}~D(k[S])$ is never Cohen-Macaulay.
\end{proposition}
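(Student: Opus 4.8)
The plan is to connect the structure of the ring of differential operators $D(k[S])$ for a numerical semigroup $S$ to the non-Cohen-Macaulayness phenomenon established earlier for MPD-semigroups. Recall that for a numerical semigroup $S$ the semigroup ring $k[S]$ is the coordinate ring of a monomial curve, and its ring of differential operators has been computed explicitly in the literature (work of Smith--Stafford and others): $D(k[S])$ is known to be a finitely generated $k$-algebra, and crucially, $\mathrm{Gr}\, D(k[S])$ is itself isomorphic to a semigroup ring of an affine semigroup sitting in a higher-dimensional lattice. First I would recall this description precisely, identifying the affine semigroup $\Gamma$ (typically a subsemigroup of $\mathbb{Z}^2$ or $\mathbb{N}^2$ built from the data of $S$ and its gaps) such that $\mathrm{Gr}\, D(k[S]) \cong k[\Gamma]$.

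The second step is to analyze the depth and dimension of this associated graded ring. Since $S \neq \mathbb{N}$ (otherwise $k[S]$ is a polynomial ring and the differential operators are the Weyl algebra, a degenerate case I would handle or exclude separately), the curve is singular and the gap set is nonempty. I would show that the affine semigroup $\Gamma$ governing $\mathrm{Gr}\, D(k[S])$ has rank (hence the ring has Krull dimension) at least $2$, while simultaneously demonstrating that $k[\Gamma]$ fails to be Cohen-Macaulay. The natural route is to exhibit that $\Gamma$ is an MPD-semigroup in the sense of this paper: I would verify that $\mathrm{PF}(\Gamma) \neq \emptyset$, which by \cite[Theorem 6]{pfelements} forces $\mathrm{depth}_R k[\Gamma] = 1$, and combine this with $\dim k[\Gamma] \geq 2$. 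The gaps of the numerical semigroup $S$ supply precisely the pseudo-Frobenius-type data needed to witness that $\Gamma \setminus S$-complement behaves like the complement of an MPD-semigroup, so the finiteness or structure of $\mathcal{H}(\Gamma)$ is where the pseudo-Frobenius elements come from.

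The main obstacle I anticipate is the bookkeeping in the explicit identification of $\mathrm{Gr}\, D(k[S])$ as a semigroup ring and in verifying that the resulting affine semigroup genuinely has dimension at least two together with a nonempty pseudo-Frobenius set. The description of $D(k[S])$ involves operators indexed by the idealizer data of $S$ and its conductor, and translating the filtration degrees into the grading of an affine semigroup requires care to ensure the monomials generate something of full rank rather than collapsing to a one-dimensional object. Once the semigroup-ring structure is pinned down, the non-Cohen-Macaulayness is essentially inherited from the general principle that an MPD-semigroup ring of Krull dimension greater than one can never be Cohen-Macaulay, since $\mathrm{depth} = 1 < \dim$.

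I would therefore organize the proof as: (i) invoke the known presentation of $\mathrm{Gr}\, D(k[S])$ as $k[\Gamma]$ for an explicit affine semigroup $\Gamma$; (ii) show $\dim k[\Gamma] = 2 > 1$ by confirming $\mathrm{rank}(G(\Gamma)) = 2$; (iii) show $\mathrm{PF}(\Gamma) \neq \emptyset$, equivalently that $\Gamma$ is MPD, so that $\mathrm{depth} = 1$; and (iv) conclude that $\mathrm{depth}\, k[\Gamma] = 1 < 2 = \dim k[\Gamma]$, whence $\mathrm{Gr}\, D(k[S])$ is not Cohen-Macaulay. The edge case $S = \mathbb{N}$ should be noted as the only exception, consistent with the hypothesis that we deal with a genuine numerical semigroup having a nonempty gap set.
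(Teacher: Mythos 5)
Your proposal follows essentially the same route as the paper: the paper invokes Eriksen's theorem that $\mathrm{Gr}\,D(k[S]) \simeq k[S']$ for an affine semigroup $S' \subseteq \mathbb{N}^2$ with finite complement, deduces from the finiteness of $\mathbb{N}^2 \setminus S'$ that $\mathrm{PF}(S') \neq \emptyset$ so that $S'$ is MPD with $\mathrm{depth} = 1 < 2 = \dim$, exactly as in your steps (i)--(iv). Your remark about excluding $S = \mathbb{N}$ is a sensible caveat that the paper leaves implicit.
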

\begin{proof}
By \cite[Theorem 6]{eriksen}, we have $\mathrm{Gr}~D(k[S]) \simeq k[S'] $, where $S'$ is an affine semigroup in $\mathbb{N}^2$ with $\mathbb{N}^2 \setminus S'$ is finite. Therefore, there must exist an element $f \in \mathbb{N}^2 \setminus S'$ such that $f + s \in S'$ for all nonzero $s \in S'$. By \cite[Theorem 6]{pfelements}, $S'$ is an $\mathrm{MPD}$-semigroup. Therefore, $\mathrm{depth}_R k[S'] = 1$. On the other hand, it is clear that the Krull dimension of $k[S']$ is 2. Hence $\mathrm{Gr}~D(k[S]) \simeq k[S'] $ is not Cohen-Macaulay.
\end{proof}

We have seen that a simplicial $\mathrm{MPD}$-semigroup in $\mathbb{N}^d$, where $d > 1$, is never Cohen-Macaulay. However, we observe that it may be Buchsbaum. Following is the necessary and sufficient condition for a simplicial MPD-semigroup to be Buchsbaum.

\begin{proposition}\label{buchsbaummpd}
Let $S$ be a simplicial $\mathrm{MPD}$-semigroup in $\mathbb{N}^d$, $d > 1$. Then $k[S]$ is Buchsbaum if and only if $\mathcal{D} = \mathrm{PF}(S)$.
\end{proposition}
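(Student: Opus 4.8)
The plan is to reduce the criterion of Theorem \ref{Buchsbaumness} to a purely set-theoretic comparison between $\mathcal{D}$ and $\mathrm{PF}(S)$. Write $B = \{a \in G(S) \mid a + 2a_i, a + 2a_j \in S \text{ for some } i \neq j,\ 1 \leq i,j \leq d\}$ for the set appearing in that criterion, so that $k[S]$ is Buchsbaum iff $B + \mathrm{Hilb}(S) \subseteq S$. Since every $a \in S$ trivially satisfies $a + 2a_i \in S$, we have $S \subseteq B \subseteq G(S)$, and hence $B = S \cup \mathcal{D}$ with $\mathcal{D} = B \setminus S$. Because $\mathrm{Hilb}(S) \subseteq S$ forces $S + \mathrm{Hilb}(S) \subseteq S$ automatically, the criterion collapses to the single condition $\mathcal{D} + \mathrm{Hilb}(S) \subseteq S$.

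The key geometric step, and the one I expect to be the main obstacle, is to show $\mathcal{D} \subseteq \mathcal{H}(S)$; this is where simpliciality is used essentially. Given $a \in \mathcal{D}$, the extremal rays $a_1, \ldots, a_d$ form a $\mathbb{Q}$-basis of $\mathbb{Q}^d$, so I can write $a = \sum_{k=1}^d \mu_k a_k$ with $\mu_k \in \mathbb{Q}$, and by simpliciality $\mathrm{cone}(S) = \{\sum_k \lambda_k a_k \mid \lambda_k \geq 0\}$. Comparing coordinates in this basis, $a + 2a_i \in S \subseteq \mathrm{cone}(S)$ forces $\mu_k \geq 0$ for all $k \neq i$, while $a + 2a_j \in \mathrm{cone}(S)$ forces $\mu_k \geq 0$ for all $k \neq j$; since $i \neq j$, together these give $\mu_k \geq 0$ for every $k$, so $a \in \mathrm{cone}(S)$. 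As $a \in G(S) \subseteq \mathbb{Z}^d$ and $\mathrm{cone}(S) \subseteq \mathbb{Q}^d_{\geq 0}$, this forces $a \in \mathbb{N}^d$; and since $a \notin S$ by definition of $\mathcal{D}$, we conclude $a \in (\mathrm{cone}(S) \setminus S) \cap \mathbb{N}^d = \mathcal{H}(S)$.

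With $\mathcal{D} \subseteq \mathcal{H}(S)$ in hand, I unwind the Buchsbaum condition. Since $\mathrm{Hilb}(S) = \{a_1, \ldots, a_n\}$, the condition $\mathcal{D} + \mathrm{Hilb}(S) \subseteq S$ says precisely that every $a \in \mathcal{D}$ satisfies $a + a_j \in S$ for all $j \in [1,n]$; combined with $a \in \mathcal{H}(S)$, Definition \ref{pf} identifies this with $a \in \mathrm{PF}(S)$. Thus $k[S]$ is Buchsbaum if and only if $\mathcal{D} \subseteq \mathrm{PF}(S)$.

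Finally I establish the reverse inclusion $\mathrm{PF}(S) \subseteq \mathcal{D}$, which holds unconditionally (using only $d > 1$), so that $\mathcal{D} \subseteq \mathrm{PF}(S)$ upgrades to equality. For $f \in \mathrm{PF}(S)$ we have $f \in \mathcal{H}(S)$, in particular $f \notin S$; and $f + a_1 \in S$ gives $f = (f + a_1) - a_1 \in G(S)$, so $f \in G(S) \setminus S$. Moreover each $2a_i$ lies in $S \setminus \{0\}$, so the pseudo-Frobenius property yields $f + 2a_i \in S$ for every $i \in [1,d]$; choosing any two distinct indices $i \neq j$ in $[1,d]$ (possible since $d > 1$) shows $f \in \mathcal{D}$. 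Combining the two inclusions, $k[S]$ is Buchsbaum if and only if $\mathcal{D} = \mathrm{PF}(S)$, completing the argument.
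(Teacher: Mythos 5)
Your proof is correct, and its skeleton matches the paper's: both reduce to Theorem \ref{Buchsbaumness}, observe that the set $B$ appearing there decomposes as $S \cup \mathcal{D}$ so that the criterion collapses to $\mathcal{D} + \mathrm{Hilb}(S) \subseteq S$, prove $\mathrm{PF}(S) \subseteq \mathcal{D}$ unconditionally, and identify the Buchsbaum condition with $\mathcal{D} \subseteq \mathrm{PF}(S)$. The one step where you genuinely diverge is the inclusion $\mathcal{D} \subseteq \mathcal{H}(S)$. The paper obtains this only under the Buchsbaum hypothesis: from $a + \mathrm{Hilb}(S) \subseteq S$ it invokes the proof of the converse part of \cite[Theorem 6]{pfelements} to conclude $a \in \mathcal{H}(S)$. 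You instead prove $\mathcal{D} \subseteq \mathcal{H}(S)$ directly and unconditionally, by expanding $a$ in the $\mathbb{Q}$-basis of extremal rays and noting that $a + 2a_i \in \mathrm{cone}(S)$ and $a + 2a_j \in \mathrm{cone}(S)$ for two distinct indices $i \neq j$ force every coordinate of $a$ to be non-negative, whence $a \in \mathrm{cone}(S) \cap \mathbb{Z}^d \subseteq \mathbb{N}^d$ and so $a \in \mathcal{H}(S)$. This is precisely where simpliciality enters, it is self-contained (no appeal to the internals of an external proof), and it uses only the two translates $a+2a_i, a+2a_j$ rather than all of $a + \mathrm{Hilb}(S)$; as a result your logic streamlines into the clean chain Buchsbaum $\iff \mathcal{D} + \mathrm{Hilb}(S) \subseteq S \iff \mathcal{D} \subseteq \mathrm{PF}(S) \iff \mathcal{D} = \mathrm{PF}(S)$. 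The paper's version is shorter on the page but leans on the citation and, as written, only establishes $\mathcal{D} \subseteq \mathcal{H}(S)$ once $k[S]$ is already assumed Buchsbaum; your argument buys the unconditional statement $\mathcal{D} \subseteq \mathcal{H}(S)$, which is reusable and makes the equivalence more transparent.
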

\begin{proof}
Let $a \in \mathcal{D}$ and $k[S]$ is Buchsbaum, then by Theorem \ref{Buchsbaumness}, we get $a + \text{Hilb}(S) \subseteq S$. Since $S$ is a finitely generated submonoid of $\mathbb{N}^d$, we have $\mathrm{Hilb}(S) = \{a_1,\ldots,a_n\}$. Therefore, we have $a + \{a_1, \ldots,a_n\} \in S$. Since $a \in G(S)\setminus S$, from the proof of the converse part of  \cite[Theorem 6]{pfelements}, we deduce that $a \in \mathcal{H}(S)$. Therefore, $a \in \mathrm{PF}(S)$ and hence, $\mathcal{D} \subseteq \mathrm{PF}(S)$. Conversely, suppose $a \in \mathrm{PF}(S)$, then clearly $a \in G(S)\setminus S$. Also, since $a + s \in S$ for all nonzero $s \in S$, we obtain that $a \in \mathcal{D}$. Hence, $ \mathrm{PF}(S)\subseteq \mathcal{D}$.
Now, suppose $\mathcal{D} = \mathrm{PF}(S)$. Since Hilbert basis is a subset of the semigroup, we get $\mathcal{D} + \text{Hilb}(S) \subseteq S$. Hence by Theorem \ref{Buchsbaumness}, $k[S]$ is Buchsbaum.
\end{proof}

\begin{example}
{\rm Let $S$ be a submonoid of $\mathbb{N}^2$ generated by the set
 \[S = \langle (1,0), (1,1), (1,2), (0,3), (0,4), (0,5) \rangle.\]
Note that $\mathcal{H}(S) = \{(0,1),(0,2)\}$, and also $\mathrm{PF}(S) = \{(0,1), (0,2)\}.$ Observe that, here $G(S) =\mathbb{Z}^2$. $S$ is a simplicial affine semigroup with respect to the extremal rays $\{(1,0), (0,3)\}$. Thus the set $\{(1,0), (0,3)\}$, generate the cone of $S$. Suppose $a \in \mathbb{Z}^2$ and $a+2(1,0), a+2(0,3) \in S$ simultaneously, then none of the component of $a$ can be negative. Thus one can observe that, either $a \in S$ or $a \in \{(0,1), (0,2)\}$. Therefore, we get $\mathcal{D} = \{(0,1),(0,2)\} = \mathrm{PF}(S)$. Hence, $k[S]$ is Buchsbaum.}
\end{example}

\section{$\prec$-almost symmetric semigroups}

In \cite{barucci-froberg}, Barucci and Fr\"{o}berg gave a characterization of almost symmetric numerical semigroups in terms of canonical semigroup ideal. In this section, we give a similar characterization for $\prec$-almost symmetric $\mathrm{MPD}$-semigroups.

\medskip
Let $S$ be an $\mathrm{MPD}$-semigroup, $T \subseteq S$ is called an ideal of $S$ if $S+T \subseteq T$. Let $\prec$ be a term order on $\mathbb{N}^d$. If $\mathrm{F}(S)_{\prec}$ exists, then we define $\Omega_{\prec} = \{z \in \mathrm{cone}(S) \cap \mathbb{N}^d \mid \mathrm{F}(S)_{\prec} - z \notin S\}$. It is clear that $S \subseteq \Omega_{\prec} \subseteq \mathbb{N}^d$.

\begin{proposition}
Let $S$ be a $\mathcal{C}$-semigroup. Then $\Omega_{\prec}$ is an ideal of $S$ for any term order $\prec$.
\end{proposition}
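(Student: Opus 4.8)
The plan is to verify the two defining conditions of an ideal: that $\Omega_{\prec} \subseteq S$ fails in general but $S \subseteq \Omega_{\prec}$ holds (already noted), and, crucially, that $S + \Omega_{\prec} \subseteq \Omega_{\prec}$. So I would fix $s \in S$ and $z \in \Omega_{\prec}$ and show $s + z \in \Omega_{\prec}$. First I would check membership in the ambient set: since $s \in S \subseteq \mathrm{cone}(S) \cap \mathbb{N}^d$ and $z \in \mathrm{cone}(S) \cap \mathbb{N}^d$, their sum $s+z$ again lies in $\mathrm{cone}(S) \cap \mathbb{N}^d$, because the cone is closed under addition and under scaling by $\mathbb{Q}_{\geq 0}$, and $\mathbb{N}^d$ is a submonoid. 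That disposes of the easy half.

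The substantive half is the defining condition $\mathrm{F}(S)_{\prec} - (s+z) \notin S$. I would argue by contraposition: suppose instead that $\mathrm{F}(S)_{\prec} - (s+z) \in S$. Writing $w := \mathrm{F}(S)_{\prec} - (s+z)$, I would then have $\mathrm{F}(S)_{\prec} - z = w + s$, and since $w \in S$ and $s \in S$ with $S$ a monoid closed under addition, this gives $\mathrm{F}(S)_{\prec} - z = w + s \in S$. But $z \in \Omega_{\prec}$ means precisely $\mathrm{F}(S)_{\prec} - z \notin S$, a contradiction. Hence $\mathrm{F}(S)_{\prec} - (s+z) \notin S$, so $s + z \in \Omega_{\prec}$, which establishes $S + \Omega_{\prec} \subseteq \Omega_{\prec}$.

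I expect the main point requiring care to be the membership of $s+z$ in $\mathrm{cone}(S) \cap \mathbb{N}^d$ rather than the contradiction step, which is essentially formal. One should note that the hypothesis that $S$ is a $\mathcal{C}$-semigroup guarantees $\mathcal{H}(S)$ is finite, hence $\mathrm{F}(S)_{\prec} = \max_{\prec}\mathcal{H}(S)$ exists, so that $\Omega_{\prec}$ is well defined to begin with; this is where the $\mathcal{C}$-semigroup assumption is genuinely used. Everything else reduces to the monoid closure of $S$ and the cone's closure under addition, so no deeper structural input is needed.
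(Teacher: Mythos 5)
Your proof is correct and follows essentially the same route as the paper: the paper also writes $\mathrm{F}(S)_{\prec} - (s+z) = (\mathrm{F}(S)_{\prec} - z) - s$ and concludes membership of $s+z$ in $\Omega_{\prec}$ from $\mathrm{F}(S)_{\prec} - z \notin S$, which is your contrapositive step. Your additional remarks on $s+z \in \mathrm{cone}(S)\cap\mathbb{N}^d$ and on why the $\mathcal{C}$-semigroup hypothesis guarantees $\mathrm{F}(S)_{\prec}$ exists are sensible housekeeping that the paper leaves implicit.
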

\begin{proof}
Let $s \in S$ and $z \in \Omega_{\prec}$ then we have $\mathrm{F}(S)_{\prec} - (s +z) = (\mathrm{F}(S)_{\prec}-z) - s$. Since $\mathrm{F}(S)_{\prec}-z \notin S$, we have $\mathrm{F}(S)_{\prec} - (s +z) \notin S$, and therefore $s+z \in \Omega_{\prec}$. Hence, $\Omega_{\prec}$ is an ideal of $S$.
\end{proof}

Let $I$ and $J$ be two ideal of $S$, then we define 
\begin{center}
$I-J = \{z \in \mathrm{cone}(S) \cap \mathbb{N}^d \mid z + J \subseteq I\}$. 
\end{center}

\begin{proposition}
 Let $I$ and $J$ be two ideals of $S$, then $I-J$ is also an ideal of $S$.
\end{proposition}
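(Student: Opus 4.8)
The plan is to verify directly that $I-J$ is closed under addition by elements of $S$, which is the defining property of an ideal. So I would fix an arbitrary $s \in S$ and an arbitrary $z \in I-J$, and aim to show that $s+z \in I-J$. By the definition of $I-J$, establishing this amounts to checking two things: that $s+z$ lies in the ambient set $\mathrm{cone}(S) \cap \mathbb{N}^d$, and that $(s+z)+J \subseteq I$.

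For the first point, I would note that $z \in \mathrm{cone}(S) \cap \mathbb{N}^d$ by hypothesis and that $s \in S \subseteq \mathrm{cone}(S) \cap \mathbb{N}^d$. Since $\mathrm{cone}(S)$ is closed under addition (being closed under non-negative rational combinations of its elements) and $\mathbb{N}^d$ is a submonoid of $\mathbb{Z}^d$, the sum $s+z$ again lies in $\mathrm{cone}(S) \cap \mathbb{N}^d$. For the second point, I would rewrite $(s+z)+J = s + (z+J)$ and use the two ideal structures in turn: since $z \in I-J$ we have $z+J \subseteq I$, hence $s+(z+J) \subseteq s+I$; and since $I$ is an ideal of $S$ and $s \in S$, we have $s+I \subseteq I$. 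Chaining these inclusions gives $(s+z)+J \subseteq I$.

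Combining the two points shows $s+z \in I-J$, so $S + (I-J) \subseteq I-J$, and therefore $I-J$ is an ideal of $S$. There is no real obstacle here; the argument is a short formal verification, and the only point requiring a little care is keeping the ambient-membership condition $s+z \in \mathrm{cone}(S)\cap\mathbb{N}^d$ explicitly in view (so that $I-J$ is checked to be a subset of the correct containing set) rather than verifying only the absorption condition $(s+z)+J\subseteq I$.
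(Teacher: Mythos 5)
Your argument is correct and is essentially identical to the paper's own proof: both verify $(s+z)+J = s+(z+J) \subseteq s+I \subseteq I$ using that $z+J\subseteq I$ and that $I$ is an ideal, together with the observation that $s+z$ stays in $\mathrm{cone}(S)\cap\mathbb{N}^d$. Your version merely spells out the ambient-membership check a bit more explicitly, which is fine.
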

\begin{proof}
Let $z \in I-J$. This implies that $z+J \subseteq I$. For any $s \in S$, we have $s+z \in \mathrm{cone(S) \cap \mathbb{N}^d}$. Therefore $z+ s + J = s+(z+J) \subseteq s+I$. Since $I$ is an ideal of $S$, we get $z+s+J \subseteq I$. Therefore, $(I-J)+S \subseteq I-J$. Hence, $I-J$ is an ideal of $S$.
\end{proof}

\begin{proposition}\label{PFS}
Let $S^* = S \setminus \{0\}$. Then $\mathrm{PF}(S) =  (S-S^*)\setminus S$.
\end{proposition}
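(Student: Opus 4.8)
The plan is to prove the set equality by a straightforward double inclusion, after first checking that the expression $S - S^*$ is legitimate in the sense of the preceding proposition. I would begin by verifying that $S^* = S \setminus \{0\}$ is an ideal of $S$: for $s \in S$ and $t \in S^*$ the sum $s + t$ lies in $\mathbb{N}^d$ and is nonzero, since $t$ has a positive coordinate, so $S + S^* \subseteq S^*$; likewise $S$ itself is an ideal. Thus $S - S^* = \{z \in \mathrm{cone}(S) \cap \mathbb{N}^d \mid z + S^* \subseteq S\}$ is a well-defined ideal of $S$, and the statement makes sense as written.

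For the inclusion $\mathrm{PF}(S) \subseteq (S - S^*) \setminus S$, I would take $f \in \mathrm{PF}(S)$ and unwind Definition \ref{pf}. Membership in $\mathcal{H}(S)$ gives $f \in (\mathrm{cone}(S) \setminus S) \cap \mathbb{N}^d$, so in particular $f \in \mathrm{cone}(S) \cap \mathbb{N}^d$ and $f \notin S$; the defining property $f + s \in S$ for all $s \in S \setminus \{0\}$ is exactly $f + S^* \subseteq S$. Hence $f \in S - S^*$, and since $f \notin S$ we conclude $f \in (S - S^*) \setminus S$.

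For the reverse inclusion, I would take $z \in (S - S^*) \setminus S$, so that $z \in \mathrm{cone}(S) \cap \mathbb{N}^d$ with $z + S^* \subseteq S$ and $z \notin S$. The conditions $z \in \mathrm{cone}(S) \cap \mathbb{N}^d$ and $z \notin S$ together place $z$ in $(\mathrm{cone}(S) \setminus S) \cap \mathbb{N}^d = \mathcal{H}(S)$, while $z + S^* \subseteq S$ says precisely that $z + s \in S$ for every $s \in S \setminus \{0\}$; by Definition \ref{pf} this gives $z \in \mathrm{PF}(S)$, completing the equality.

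The argument is essentially a matter of matching the defining conditions, so there is no serious obstacle. The only point requiring care is to confirm that the ambient sets agree: both $\mathrm{PF}(S)$ (through $\mathcal{H}(S)$) and the colon operation $S - S^*$ are built over $\mathrm{cone}(S) \cap \mathbb{N}^d$, so that the clause $\setminus S$ trims the colon set down to exactly the non-semigroup elements that constitute $\mathcal{H}(S)$.
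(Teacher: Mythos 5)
Your proof is correct and follows essentially the same route as the paper: the paper simply computes $S - S^* = S \cup \mathrm{PF}(S)$ by unwinding the definitions and then subtracts $S$, which is your double inclusion written as a single chain of equalities. Your extra verification that $S^*$ is an ideal (so the colon operation is defined) is a small point of rigor the paper omits but does no harm.
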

\begin{proof}
We have 
\begin{align*}
S-S^* &= \{z \in cone(S) \cap \mathbb{N}^d \mid z + S^* \subseteq S\}\\
&=\{z \in cone(S) \cap \mathbb{N}^d \mid z + s\in S, ~ \forall ~ s \in S\setminus \{0\} \}\\
&= S \cup \mathrm{PF}(S).
\end{align*}
Hence, we have $\mathrm{PF}(S) = (S-S^*)\setminus S$.
\end{proof}

Now, we recall some definitions from \cite{op}.

\begin{definition}
Let $\prec$ be a term order $\prec$ such that $\mathrm{F}(S)_{\substack{\\\prec}} = \max_{\substack{\\\prec}} \mathcal{H}(S)$ exists.
	\begin{enumerate}
		\item If $|\mathrm{PF}(S)|=1$ and $\mathrm{PF}(S) = \{\mathrm{F}(S)_{\substack{\\\prec}}\}$, then $S$ is called a {\bf $\prec$-symmetric} semigroup.
		
		\item Put $\mathrm{PF}_{\substack{\\\prec}}' (S) = \mathrm{PF}(S) \setminus \{ \mathrm{F}(S)_{\substack{\\\prec}} \}.$ If $\mathrm{PF}_{\substack{\\\prec}}' (S) \neq \emptyset$ and if for any $g \in \mathrm{PF}_{\substack{\\\prec}}' (S)$, $\mathrm{F}(S)_{\substack{\\\prec}} - g \in \mathrm{PF}_{\substack{\\\prec}}' (S)$, we say that $S$ is {\bf $\prec$-almost symmetric}. Further, if the Betti-type of $S$ is two, then $S$ is called {\bf $\prec$-pseudo-symmetric}. In this case, $\mathrm{PF}(S) = \{ \mathrm{F}(S)_{\substack{\\\prec}}, \mathrm{F}(S)_{\substack{\\\prec}}/2 \}.$ 
	\end{enumerate}
\end{definition}

\begin{coro}
Let $S$ be a $\mathcal{C}$-semigroup, then $S$ is $\prec$-symmetric if and only if $\Omega_{\prec} = S$.
\end{coro}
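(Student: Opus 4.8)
The plan is to translate the set equality $\Omega_{\prec} = S$ into a condition involving only $\mathcal{H}(S)$ and the Frobenius element, and then to match that condition against the defining requirement $\mathrm{PF}(S) = \{\mathrm{F}(S)_{\prec}\}$ of $\prec$-symmetry. First I would record two structural facts that hold because $S$ is a $\mathcal{C}$-semigroup: the set $\mathcal{H}(S)$ is non-empty and finite, so $\mathrm{F}(S)_{\prec} = \max_{\prec}\mathcal{H}(S)$ exists, and $\mathrm{cone}(S)\cap\mathbb{N}^d$ decomposes as the disjoint union $S \sqcup \mathcal{H}(S)$, since $\mathcal{H}(S) = (\mathrm{cone}(S)\setminus S)\cap\mathbb{N}^d$. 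Combining this with the already-noted inclusion $S \subseteq \Omega_{\prec}$, the equality $\Omega_{\prec} = S$ becomes equivalent to $\Omega_{\prec}\cap\mathcal{H}(S) = \emptyset$, that is, to the requirement that $\mathrm{F}(S)_{\prec} - z \in S$ for every $z \in \mathcal{H}(S)$. This reformulation is the bridge between the two conditions in the statement.

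For the forward implication, I would assume $S$ is $\prec$-symmetric, so $\mathrm{PF}(S) = \{\mathrm{F}(S)_{\prec}\}$. Given any $z \in \mathcal{H}(S)$, the cited result \cite[Corollary 2.15]{op} produces some $f \in \mathrm{PF}(S)$ with $f - z \in S$; as $\mathrm{PF}(S)$ is the singleton $\{\mathrm{F}(S)_{\prec}\}$, this forces $\mathrm{F}(S)_{\prec} - z \in S$. By the reformulation of the first paragraph, $\Omega_{\prec} = S$ follows.

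For the reverse implication, I would assume $\Omega_{\prec} = S$, equivalently $\mathrm{F}(S)_{\prec} - z \in S$ for all $z \in \mathcal{H}(S)$. Since every Frobenius element is pseudo-Frobenius (\cite[Lemma 12]{pfelements}), we already have $\mathrm{F}(S)_{\prec}\in\mathrm{PF}(S)$, so it remains only to exclude any other pseudo-Frobenius element. Let $f \in \mathrm{PF}(S)$; then $f \in \mathcal{H}(S)$, so by hypothesis $s := \mathrm{F}(S)_{\prec} - f \in S$. If $s \neq 0$, the pseudo-Frobenius property of $f$ gives $\mathrm{F}(S)_{\prec} = f + s \in S$, contradicting $\mathrm{F}(S)_{\prec}\in\mathcal{H}(S)$; hence $s = 0$ and $f = \mathrm{F}(S)_{\prec}$. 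Thus $\mathrm{PF}(S) = \{\mathrm{F}(S)_{\prec}\}$ has cardinality one, so $S$ is $\prec$-symmetric.

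The proof is short because \cite[Corollary 2.15]{op} does the heavy lifting; the only step that warrants care is the reformulation in the first paragraph, where one must use both $S \subseteq \Omega_{\prec}$ and the disjointness $S \cap \mathcal{H}(S) = \emptyset$ to see that controlling $\Omega_{\prec}$ on $\mathcal{H}(S)$ controls it on all of $\mathrm{cone}(S)\cap\mathbb{N}^d$. (It is also worth noting that the difference $\mathrm{F}(S)_{\prec} - z$ possibly leaving $\mathbb{N}^d$ is harmless, since that can only reinforce $\mathrm{F}(S)_{\prec} - z \notin S$.) I expect no genuine obstacle here; the main point is simply to invoke Corollary 2.15 in the correct direction in each of the two implications.
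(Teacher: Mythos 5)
Your proof is correct. The paper itself disposes of this corollary in one line, by citing Theorem 3.6 of \cite{op}, which characterizes $\prec$-symmetric $\mathcal{C}$-semigroups by the condition that $g \in S$ if and only if $\mathrm{F}(S)_{\prec} - g \notin S$ for all $g \in \mathrm{cone}(S)\cap\mathbb{N}^d$; that condition is literally the statement $\Omega_{\prec}=S$, so nothing further is needed. You instead give a self-contained argument that bypasses that theorem: you reduce $\Omega_{\prec}=S$ to the condition that $\mathrm{F}(S)_{\prec}-z\in S$ for every $z\in\mathcal{H}(S)$ (using $S\subseteq\Omega_{\prec}$ and the disjoint decomposition $\mathrm{cone}(S)\cap\mathbb{N}^d = S\sqcup\mathcal{H}(S)$), then get the forward direction from \cite[Corollary 2.15]{op} applied to the singleton $\mathrm{PF}(S)=\{\mathrm{F}(S)_{\prec}\}$, and the reverse direction by the standard observation that a pseudo-Frobenius element $f$ with $\mathrm{F}(S)_{\prec}-f\in S^*$ would force $\mathrm{F}(S)_{\prec}\in S$. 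Every step checks out, including the remark that a difference leaving $\mathbb{N}^d$ only helps. In effect you have reproved the relevant content of \cite[Theorem 3.6]{op} rather than quoting it; what this buys is independence from that external result at the cost of a slightly longer argument, and it makes visible exactly which ingredients (Corollary 2.15 of \cite{op} and Lemma 12 of \cite{pfelements}) carry the weight.
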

\begin{proof}
Follows from \cite[Theorem 3.6]{op}.
\end{proof}

\begin{coro}
Let $S$ be a $\mathcal{C}$-semigroup, then $S$ is $\prec$-pseudo-symmetric if and only if $\Omega_{\prec} = S \cup \{\mathrm{F}(S)_{\prec}/2\}$.
\end{coro}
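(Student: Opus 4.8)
The plan is to reduce both directions to an explicit description of $\Omega_{\prec}\setminus S$ and then to exploit the pseudo-Frobenius property of $\mathrm{F}(S)_{\prec}/2$. Write $F=\mathrm{F}(S)_{\prec}$ throughout. First I would record the (disjoint) decomposition $\Omega_{\prec}=S\cup\{z\in\mathcal{H}(S)\mid F-z\notin S\}$. Indeed $S\subseteq\Omega_{\prec}$, since if $z\in S$ had $F-z\in S$ then $F=z+(F-z)\in S$, contradicting $F\in\mathcal{H}(S)$; and any element of $\Omega_{\prec}\setminus S$ lies in $(\mathrm{cone}(S)\cap\mathbb{N}^d)\setminus S=\mathcal{H}(S)$. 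With this in hand, the corollary becomes equivalent to the single set equality $\{z\in\mathcal{H}(S)\mid F-z\notin S\}=\{F/2\}$, exactly parallel to the preceding $\prec$-symmetric statement where the analogous set is empty.

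For the forward implication, suppose $S$ is $\prec$-pseudo-symmetric, so by definition $\mathrm{PF}(S)=\{F,F/2\}$ with $F/2\in\mathcal{H}(S)$. Since $F-F/2=F/2\notin S$, the element $F/2$ belongs to the set. For the reverse inclusion I take $z\in\mathcal{H}(S)$ with $z\neq F/2$ and show $F-z\in S$. As $S$ is a $\mathcal{C}$-semigroup with $F$ existing, $\mathcal{H}(S)$ is non-empty and finite, so \cite[Corollary 2.15]{op} yields some $f\in\mathrm{PF}(S)$ with $f-z\in S$. If $f=F$ we are done; if $f=F/2$, then $F/2-z\in S\setminus\{0\}$ (nonzero because $z\neq F/2$), and since $F/2\in\mathrm{PF}(S)$ we get $F-z=F/2+(F/2-z)\in S$. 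Hence the set equals $\{F/2\}$.

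For the converse, assume $\Omega_{\prec}=S\cup\{F/2\}$; the decomposition above forces $F/2\in\mathcal{H}(S)$ and $\{z\in\mathcal{H}(S)\mid F-z\notin S\}=\{F/2\}$. The crux is to show $F/2\in\mathrm{PF}(S)$. I take $s\in S\setminus\{0\}$ and suppose $F/2+s\notin S$; then $F/2+s\in\mathcal{H}(S)$ and $F/2+s\neq F/2$, so the set equality gives $F-(F/2+s)=F/2-s\in S$, whence $F/2=s+(F/2-s)\in S$, contradicting $F/2\in\mathcal{H}(S)$. Thus $F/2+s\in S$ for all $s\in S\setminus\{0\}$, i.e. $F/2\in\mathrm{PF}(S)$. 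Since every Frobenius element is pseudo-Frobenius, $F\in\mathrm{PF}(S)$; and for any $f\in\mathrm{PF}(S)\setminus\{F\}$ one has $F-f\notin S$ (else $F=f+(F-f)\in S$), so $f\in\Omega_{\prec}\setminus S=\{F/2\}$. Therefore $\mathrm{PF}(S)=\{F,F/2\}$, the Betti-type is $2$, and taking $g=F/2\in\mathrm{PF}(S)\setminus\{F\}$ with $F-g=F/2\in\mathrm{PF}(S)\setminus\{F\}$ shows $S$ is $\prec$-almost symmetric; hence $S$ is $\prec$-pseudo-symmetric.

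The main obstacle is the converse, precisely the verification that $F/2\in\mathrm{PF}(S)$: one must convert the hypothesis, which only controls which $z$ satisfy $F-z\notin S$, into the statement $F/2+s\in S$ for every nonzero $s$. The key device is the folding identity $F-(F/2+s)=F/2-s$ combined with $F/2=s+(F/2-s)$, turning a failure of $F/2+s\in S$ into the contradiction $F/2\in S$. Minor points to handle carefully are that $F/2\neq F$ (since $F\neq 0$, as $0\in S$ but $F\in\mathcal{H}(S)$), that the hypothesis implicitly forces $F/2\in\mathbb{N}^d$, and that all the relevant sums stay inside $\mathrm{cone}(S)\cap\mathbb{N}^d$ so that membership in $\mathcal{H}(S)$ is legitimate.
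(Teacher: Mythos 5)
Your proof is correct, but it takes a different route from the paper: the paper's entire proof is the one-line citation ``Follows from \cite[Theorem 3.7]{op}'', delegating to a pre-existing characterization of $\prec$-pseudo-symmetric semigroups, whereas you argue everything directly from the definitions. Your reduction of the statement to the set equality $\Omega_{\prec}\setminus S=\{z\in\mathcal{H}(S)\mid \mathrm{F}(S)_{\prec}-z\notin S\}=\{\mathrm{F}(S)_{\prec}/2\}$ is sound, the forward direction correctly combines \cite[Corollary 2.15]{op} with the pseudo-Frobenius property of $\mathrm{F}(S)_{\prec}/2$, and the converse's folding identity $\mathrm{F}(S)_{\prec}-(\mathrm{F}(S)_{\prec}/2+s)=\mathrm{F}(S)_{\prec}/2-s$ is exactly the right device to extract $\mathrm{F}(S)_{\prec}/2\in\mathrm{PF}(S)$ from a hypothesis that only controls which $z$ have $\mathrm{F}(S)_{\prec}-z\notin S$. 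You also handle the small but necessary points (that the hypothesis forces $\mathrm{F}(S)_{\prec}/2\in\mathbb{N}^d\setminus S$, that $\mathrm{F}(S)_{\prec}/2-z\neq 0$ when $z\neq\mathrm{F}(S)_{\prec}/2$, and that any $f\in\mathrm{PF}(S)\setminus\{\mathrm{F}(S)_{\prec}\}$ lands in $\Omega_{\prec}\setminus S$). What your approach buys is a self-contained argument that in effect reproves the relevant direction of \cite[Theorem 3.7]{op} in passing; what the paper's citation buys is brevity and consistency with the parallel $\prec$-symmetric corollary, which is likewise disposed of by citing \cite[Theorem 3.6]{op}. No gaps.
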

\begin{proof}
Follows from \cite[Theorem 3.7]{op}.
\end{proof}

\begin{proposition}\label{almostsymmetric}
Let $S$ be a $\mathcal{C}$-semigroup. If $S$ is $\prec$-almost symmetric then for each $g \in \cone(S) \cap \mathbb{N}^d$ we have: 
	\[ g \in S \iff \mathrm{F}(S)_{\substack{\\\prec}} - g \notin S \text{ and } g \notin \mathrm{PF}_{\substack{\\\prec}}' (S). \]
\end{proposition}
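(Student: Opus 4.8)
The plan is to prove the biconditional by treating the two implications separately; the forward direction is essentially formal, while the reverse direction carries the real content and is where $\prec$-almost symmetry must be used. Throughout I write $F = \mathrm{F}(S)_{\prec}$, and I recall that $F \in \mathrm{PF}(S)$ (every Frobenius element is pseudo-Frobenius) and that $\mathrm{PF}_{\prec}'(S) = \mathrm{PF}(S) \setminus \{F\}$.

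For the forward implication, suppose $g \in S$. Since $S$ is closed under addition and $F \notin S$ (as $F \in \mathcal{H}(S)$), the element $F - g$ cannot lie in $S$: otherwise $F = g + (F - g)$ would be a sum of two elements of $S$, hence in $S$, a contradiction. Moreover, $g \in S$ forces $g \notin \mathcal{H}(S)$, and since $\mathrm{PF}_{\prec}'(S) \subseteq \mathrm{PF}(S) \subseteq \mathcal{H}(S)$, we get $g \notin \mathrm{PF}_{\prec}'(S)$. This settles the forward direction and uses nothing beyond the semigroup property and the definition of $\mathcal{H}(S)$.

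For the reverse implication I would argue by contradiction. Assume $F - g \notin S$ and $g \notin \mathrm{PF}_{\prec}'(S)$, but suppose toward a contradiction that $g \notin S$. Since $g \in (\mathrm{cone}(S) \cap \mathbb{N}^d) \setminus S$, we have $g \in \mathcal{H}(S)$, so by \cite[Corollary 2.15]{op} there is some $f \in \mathrm{PF}(S)$ with $f - g \in S$. If $f = F$, this directly contradicts $F - g \notin S$, so we may assume $f \in \mathrm{PF}_{\prec}'(S)$. Set $s := f - g \in S$. If $s = 0$, then $g = f \in \mathrm{PF}_{\prec}'(S)$, contradicting the hypothesis $g \notin \mathrm{PF}_{\prec}'(S)$.

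The crux is the remaining case $s \neq 0$, and this is where $\prec$-almost symmetry enters. Because $f \in \mathrm{PF}_{\prec}'(S)$, almost symmetry gives $F - f \in \mathrm{PF}_{\prec}'(S) \subseteq \mathrm{PF}(S)$. Since $F - f$ is pseudo-Frobenius and $s$ is a nonzero element of $S$, the defining property of pseudo-Frobenius elements yields $(F - f) + s \in S$. But $(F - f) + s = (F - f) + (f - g) = F - g$, so $F - g \in S$, contradicting our assumption; hence $g \in S$. The only step requiring genuine thought is this last one: the right move is to rewrite $F - g = (F - f) + (f - g)$, so that almost symmetry replaces $f$ by the pseudo-Frobenius element $F - f$, after which the pseudo-Frobenius property absorbs the nonzero semigroup part $s$. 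I expect no serious obstacle beyond spotting this decomposition and organizing the case split on $s = 0$ versus $s \neq 0$.
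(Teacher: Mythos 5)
Your proposal is correct and takes essentially the same route as the paper's proof: the forward direction is formal, and the converse rests on \cite[Corollary 2.15]{op} producing $f \in \mathrm{PF}(S)$ with $f - g \in S$, followed by the decomposition $\mathrm{F}(S)_{\prec} - g = (\mathrm{F}(S)_{\prec} - f) + (f - g)$ and the almost-symmetry hypothesis. The only cosmetic difference is that you phrase the converse as a contradiction and make the case $f - g = 0$ explicit, whereas the paper excludes it implicitly by first disposing of the case $g \in \mathrm{PF}_{\prec}'(S)$.
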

\begin{proof}
Let $S$ be $\prec$-almost symmetric. Then for all $ f \in \mathrm{PF}_{\substack{\\\prec}}' (S)$, $\mathrm{F}(S)_{\substack{\\\prec}} - f \in \mathrm{PF}_{\substack{\\\prec}}' (S)$. Let $g \in S$, then it is clear that $g \notin \mathrm{PF}_{\substack{\\\prec}}' (S)$. If $\mathrm{F}(S)_{\substack{\\\prec}} - g \in S$ then $\mathrm{F}(S)_{\substack{\\\prec}} \in S$, which is a contradiction. Hence $\mathrm{F}(S)_{\substack{\\\prec}} - g \notin S$. For the converse, we will prove that if $g \in \cone(S) \cap \mathbb{N}^d$ and $g \notin S$ then $\mathrm{F}(S)_{\substack{\\\prec}} - g \in S \text{ or } g \in \mathrm{PF}_{\substack{\\\prec}}' (S)$. If $g \in  \mathrm{PF}_{\substack{\\\prec}}' (S)$ then nothing to prove. If $g \notin  \mathrm{PF}_{\substack{\\\prec}}' (S)$ then $g \in \mathcal{H}(S)$ and by \cite[Corollary 2.15]{op}, there exist $f \in \mathrm{PF}(S)$ such that $f - g \in S$. If $f = \mathrm{F}(S)_{\substack{\\\prec}}$ then we are done. If not then $f \in \mathrm{PF}_{\substack{\\\prec}}' (S)$. Therefore, we have $\mathrm{F}(S)_{\substack{\\\prec}} - f \in \mathrm{PF}_{\substack{\\\prec}}' (S)$. Since $f -g \in S$, we have $(\mathrm{F}(S)_{\substack{\\\prec}} - f) + (f - g) \in S$. Hence $\mathrm{F}(S)_{\substack{\\\prec}} - g \in S$.
\end{proof}

\begin{definition}
Let $\preceq_{\mathbb{N}^d}$ be the partial order on ${\mathbb{N}^d}$ such that $a = (a_1 ,\ldots, a_d ) \preceq_{\mathbb{N}^d} b = (b_1 ,\ldots,b_d )$ if
and only if $a_i \leq b_i , i \in \{1,\ldots,d\}$. This partial order is known as the usual partial order on $\mathbb{N}^d$.
\end{definition}

\begin{proposition}\label{cardH(S)}
Let $S$ be a $\mathcal{C}$-semigroup with full cone. If $S$ is $\prec$-almost symmetric then
\[
 \vert \mathcal{H}(S) \setminus  \mathrm{PF}_{\substack{\\\prec}}'(S)  \vert = \vert \{ g \in S \mid g \preceq_{\mathbb{N}^d} \mathrm{F}(S)_{\substack{\\\prec}}\} \vert
\]
\end{proposition}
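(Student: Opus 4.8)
The plan is to exhibit an explicit bijection between the two finite sets and conclude equality of cardinalities. Write $F = \mathrm{F}(S)_{\substack{\\\prec}}$ for brevity, and set $A = \{g \in S \mid g \preceq_{\mathbb{N}^d} F\}$ and $B = \mathcal{H}(S) \setminus \mathrm{PF}_{\substack{\\\prec}}'(S)$. The first step is to use the full-cone hypothesis to simplify the ambient combinatorics: since $\mathrm{cone}(S) = \mathbb{Q}^d_{\geq 0}$, we have $\mathrm{cone}(S) \cap \mathbb{N}^d = \mathbb{N}^d$, so $\mathcal{H}(S) = \mathbb{N}^d \setminus S$, and for any $g \in \mathbb{N}^d$ the relation $g \preceq_{\mathbb{N}^d} F$ is equivalent to $F - g \in \mathbb{N}^d$. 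This makes the subtraction map $\varphi(g) = F - g$ a well-defined involution on $\{g \in \mathbb{N}^d \mid g \preceq_{\mathbb{N}^d} F\}$, and the whole argument reduces to checking that $\varphi$ carries $A$ onto $B$.

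Next I would verify $\varphi(A) \subseteq B$. Take $g \in A$; then $g \in S$ and $F - g \in \mathbb{N}^d$. By the forward implication of Proposition \ref{almostsymmetric} applied to $g$, membership $g \in S$ forces $F - g \notin S$, so $F - g \in \mathbb{N}^d \setminus S = \mathcal{H}(S)$. It remains to rule out $F - g \in \mathrm{PF}_{\substack{\\\prec}}'(S)$: if it were, then $\prec$-almost symmetry would give $F - (F-g) = g \in \mathrm{PF}_{\substack{\\\prec}}'(S) \subseteq \mathcal{H}(S)$, contradicting $g \in S$ (as $\mathcal{H}(S) \cap S = \emptyset$). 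Hence $F - g \in B$.

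Then I would verify the reverse inclusion, namely that $h \mapsto F - h$ sends $B$ into $A$. Take $h \in B$, so $h \in \mathcal{H}(S)$ (in particular $h \in \mathbb{N}^d \setminus S$) and $h \notin \mathrm{PF}_{\substack{\\\prec}}'(S)$. Reading the biconditional of Proposition \ref{almostsymmetric} with $g = h$ contrapositively: since $h \notin S$, the right-hand side must fail, i.e. $F - h \in S$ or $h \in \mathrm{PF}_{\substack{\\\prec}}'(S)$; as the latter is excluded, $F - h \in S$. Because $F - h \in S \subseteq \mathbb{N}^d$ we get $h \preceq_{\mathbb{N}^d} F$, and $F - (F - h) = h \in \mathbb{N}^d$ gives $F - h \preceq_{\mathbb{N}^d} F$, so $F - h \in A$.

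Finally, since $\varphi$ is its own inverse ($\varphi(\varphi(g)) = g$) and restricts to maps $A \to B$ and $B \to A$, it is a bijection between $A$ and $B$; as both are finite ($\mathcal{H}(S)$ is finite because $S$ is a $\mathcal{C}$-semigroup), their cardinalities agree, which is the claimed identity. I do not expect a serious obstacle here; the only delicate points are invoking the involutive structure of $\prec$-almost symmetry to keep the image out of $\mathrm{PF}_{\substack{\\\prec}}'(S)$, and carefully negating the biconditional of Proposition \ref{almostsymmetric} for the reverse inclusion. The full-cone assumption is used precisely to identify $\mathcal{H}(S)$ with $\mathbb{N}^d \setminus S$ and to translate $\preceq_{\mathbb{N}^d}$ into a subtraction lying in $\mathbb{N}^d$; without it one would have to track membership in $\mathrm{cone}(S) \cap \mathbb{N}^d$ more carefully.
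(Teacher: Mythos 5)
Your proposal is correct and follows essentially the same route as the paper: both exhibit the subtraction map $g \mapsto \mathrm{F}(S)_{\prec} - g$ as a bijection between $\mathcal{H}(S) \setminus \mathrm{PF}_{\prec}'(S)$ and $\{g \in S \mid g \preceq_{\mathbb{N}^d} \mathrm{F}(S)_{\prec}\}$, using Proposition \ref{almostsymmetric} in one direction and the involutive property of $\prec$-almost symmetry to keep the image out of $\mathrm{PF}_{\prec}'(S)$ in the other. The only cosmetic difference is that you phrase the map as a self-inverse involution restricted both ways, while the paper checks well-definedness, injectivity, and surjectivity of the map in one direction.
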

\begin{proof}
Let $S$ be a $\prec$-almost symmetric semigroup. We define a map
\begin{align*}
    \phi: \mathcal{H}(S) \setminus  \mathrm{PF}_{\substack{\\\prec}}'(S)  \longrightarrow \{ g \in S \mid g \preceq_{\mathbb{N}^d} \F(S)_{\substack{\\\prec}}\} \text{ by } \phi(g) = \F(S)_{\substack{\\\prec}} - g.
    \end{align*}
Let $\mathcal{H}(S) \setminus \{ \mathrm{PF}_{\substack{\\\prec}}'(S) .$ Then $g \notin S$ and $g \notin  \mathrm{PF}_{\substack{\\\prec}}'(S).$ Since $S$ is $\prec$-almost symmetric, by Proposition \ref{almostsymmetric}, we get $\mathrm{F}(S)_{\substack{\\\prec}} - g \in S.$ As $g \in \mathbb{N}^d$ and $\F(S)_{\substack{\\\prec}} - g \in \mathbb{N}^d$, we have $\F(S)_{\substack{\\\prec}} - g \preceq_{\mathbb{N}^d} \mathrm{F}(S)_{\substack{\\\prec}}$. Thus $\phi$ is a well-defined map and clearly injective also. Now let $g \in S$ such that $g \preceq_{\mathbb{N}^d} \F(S)_{\substack{\\\prec}}$. Therefore, we have $\F(S)_{\substack{\\\prec}} - g \notin \mathrm{PF}_{\substack{\\\prec}}'(S)$. If $\F(S)_{\substack{\\\prec}} - g \in \mathrm{PF}_{\substack{\\\prec}}'(S)$ then $ \F(S)_{\substack{\\\prec}} - (\F(S)_{\substack{\\\prec}} - g) \in \mathrm{PF}_{\substack{\\\prec}}'(S)$. This implies $g \in \mathrm{PF}_{\substack{\\\prec}}'(S)$, which is a contradiction. As $S$ is $\prec$-almost symmetric, $\F(S)_{\substack{\\\prec}} - g \notin S$. Since $g \preceq_{\mathbb{N}^d} \F(S)_{\substack{\\\prec}}$, we get $\F(S)_{\substack{\\\prec}} - g \in \mathbb{N}^d = \mathrm{cone}(S)$. Thus $\F(S)_{\substack{\\\prec}} - g \in \mathcal{H}(S) \setminus  \mathrm{PF}_{\substack{\\\prec}}'(S)$ and  $\phi(\F(S)_{\substack{\\\prec}} - g) = g$. Therefore $\phi$ is a bijective map. Hence $\vert \mathcal{H}(S) \setminus  \mathrm{PF}_{\substack{\\\prec}}'(S)  \vert = \vert \{ g \in S \mid g \preceq_{\mathbb{N}^d} \mathrm{F}(S)_{\substack{\\\prec}}\} \vert$.

\end{proof}

\begin{proposition}\label{almostsymmfirst}
Let $S$ be a $\mathcal{C}$-semigroup, if $S$ is $\prec$-almost symmetric then
\begin{enumerate}
\item[(a)] $\Omega_{\prec} = S \cup \mathrm{PF}_{\substack{\\\prec}}' (S)$.
\item[(b)] $\Omega_{\prec} + S^* = S^* $.
\item[(c)] $\Omega_{\prec} \subseteq S - S^* $.
\item[(d)] $\vert \mathrm{PF}(S) \vert = \vert \Omega_{\prec} \setminus S \vert + 1$.
\end{enumerate}
\end{proposition}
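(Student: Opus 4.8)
The plan is to prove part (a) first, as parts (b)--(d) then follow by routine set-theoretic bookkeeping using Proposition \ref{PFS}, the disjointness of $\mathcal{H}(S)$ and $S$, and the monoid structure. The entire mathematical content of the proposition is concentrated in part (a), and within part (a) in a single application of the characterization in Proposition \ref{almostsymmetric}.

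For part (a), I would prove the two inclusions separately. For $S \cup \mathrm{PF}'_{\prec}(S) \subseteq \Omega_{\prec}$: if $z \in S$, then $\mathrm{F}(S)_{\prec} - z \in S$ would force $\mathrm{F}(S)_{\prec} = (\mathrm{F}(S)_{\prec} - z) + z \in S$, contradicting $\mathrm{F}(S)_{\prec} \in \mathcal{H}(S)$, so $z \in \Omega_{\prec}$; if $z \in \mathrm{PF}'_{\prec}(S)$, then $\prec$-almost symmetry gives $\mathrm{F}(S)_{\prec} - z \in \mathrm{PF}'_{\prec}(S) \subseteq \mathcal{H}(S)$, hence $\mathrm{F}(S)_{\prec} - z \notin S$ and again $z \in \Omega_{\prec}$. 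For the reverse inclusion $\Omega_{\prec} \subseteq S \cup \mathrm{PF}'_{\prec}(S)$: take $z \in \Omega_{\prec}$, so $z \in \cone(S) \cap \mathbb{N}^d$ and $\mathrm{F}(S)_{\prec} - z \notin S$. If $z \notin S$, then, since $\mathrm{F}(S)_{\prec} - z \notin S$ already holds, the $\Leftarrow$ direction of Proposition \ref{almostsymmetric} forces $z \in \mathrm{PF}'_{\prec}(S)$. This single invocation is the crux of the whole argument.

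Granting (a), the rest is immediate. For part (d), since $\mathrm{PF}'_{\prec}(S) \subseteq \mathcal{H}(S)$ is disjoint from $S$, we get $\Omega_{\prec} \setminus S = \mathrm{PF}'_{\prec}(S)$; as $\mathrm{F}(S)_{\prec} \in \mathrm{PF}(S)$ (every Frobenius element is pseudo-Frobenius), counting yields $|\Omega_{\prec} \setminus S| = |\mathrm{PF}(S)| - 1$. For part (c), I would combine $\mathrm{PF}'_{\prec}(S) \subseteq \mathrm{PF}(S)$ with Proposition \ref{PFS}, which gives $S - S^* = S \cup \mathrm{PF}(S) \supseteq S \cup \mathrm{PF}'_{\prec}(S) = \Omega_{\prec}$. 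For part (b), I would expand $\Omega_{\prec} + S^* = (S + S^*) \cup (\mathrm{PF}'_{\prec}(S) + S^*)$; here $S + S^* = S^*$ directly, while the defining property of pseudo-Frobenius elements gives $\mathrm{PF}'_{\prec}(S) + S^* \subseteq S^*$, and the reverse inclusion $S^* \subseteq \Omega_{\prec} + S^*$ is clear since $0 \in \Omega_{\prec}$.

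I do not anticipate any serious obstacle. The only nontrivial step is the use of Proposition \ref{almostsymmetric} to place an element of $\Omega_{\prec} \setminus S$ into $\mathrm{PF}'_{\prec}(S)$ in part (a); everything downstream is a formal consequence of (a) together with $\mathcal{H}(S) \cap S = \emptyset$ and Proposition \ref{PFS}.
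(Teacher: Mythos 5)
Your proof is correct and follows essentially the same route as the paper: part (a) is established via the equivalence in Proposition \ref{almostsymmetric} (the reverse inclusion being the only substantive step), and (b)--(d) are then derived as formal consequences of (a) together with Proposition \ref{PFS} and $\mathcal{H}(S)\cap S=\emptyset$. You merely spell out the bookkeeping for (b)--(d) that the paper dismisses as immediate.
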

\begin{proof}
(a) Let $z \in \Omega_{\prec}$, then $\mathrm{F}(S)_{\prec} - z \notin S$. By Proposition \ref{almostsymmetric}, either $z \in S$ or $z \in \mathrm{PF}_{\substack{\\\prec}}' (S)$. This implies that $\Omega_{\prec} \subseteq S \cup \mathrm{PF}_{\substack{\\\prec}}' (S)$. Now suppose $z \in \mathrm{PF}_{\substack{\\\prec}}' (S)$. Since  $S$ is $\prec$-almost symmetric, we have $\mathrm{F}(S)_{\prec} - z \in \mathrm{PF}_{\substack{\\\prec}}' (S)$. Thus $\mathrm{F}(S)_{\prec} - z \notin S$. Since $S \subseteq \Omega_{\prec}$, this proves that $ S \cup \mathrm{PF}_{\substack{\\\prec}}' (S) \subseteq \Omega_{\prec}$.

\medskip

(b) Follows immediately from part (a).

\medskip

(c) Note from Proposition \ref{PFS} that $S - S^* \subseteq S \cup \mathrm{PF}(S)$. Now it follows immediately from part (a).

\medskip

(d) Note from part (a) that $\vert \Omega_{\prec} \setminus S \vert = \vert \mathrm{PF}_{\substack{\\\prec}}' (S) \vert$, hence $\vert \mathrm{PF}(S) \vert = \vert \Omega_{\prec} \setminus S \vert + 1$.
\end{proof}

\begin{example}
{\rm Let $S$ be a submonoid of $\mathbb{N}^2$ generated by the set
\[
\{ \langle (0, 1), (3, 0), (4, 0), (1, 5), (5, 0), (2, 9) \rangle\}.
\]
Therefore,
\[\mathcal{H}(S) = \left\lbrace 
	\begin{array}{c}
	(1,0),(1,1),(1,2),(1,3),(1,4),(2,0),(2,1)\\[1mm] (2,2),(2,3), (2,4),(2,5),(2,6), (2,7), (2,8)
	\end{array}
	\right\rbrace .\]
Note that by \cite[Corollary 2.15]{op}, $\mathrm{PF}(S) = \{(1,4),(2,8)\}$. Let $\prec$ be the graded lexicographic order then $S$ is $\prec$-almost symmetric with Betti-type 2 and $\F(S)_{\substack{\\\prec}} = (2,8)$. 
Observe that $\F(S)_{\substack{\\\prec}} - (1,4) = (2,8) - (1,4) = (1,4) \notin S$, and for all other $f \in \mathcal{H}(S)$, $\F(S)_{\substack{\\\prec}} - f \in S$. Hence, we get 
\[
\Omega_{\prec} =  S \cup \{(1,4)\} = S \cup \mathrm{PF}_{\substack{\\\prec}}' (S).
\]
Also, it is clear that $\Omega_{\prec} \setminus S = \{(1,4)\}$. Thus $\vert \Omega_{\prec} \setminus S \vert + 1 = 2$, which is equal to the cardinality of $\mathrm{PF}(S)$.}
\end{example}

If we fix the cone as full cone and $f \preceq_{\mathbb{N}^d} \mathrm{F}(S)_{\substack{\\\prec}}$ for all $f \in \mathrm{PF}_{\substack{\\\prec}}'(S)$ then the converse of the above propositions is also true.

\begin{proposition}\label{almostsymmetriciff}
Let $S$ be a $\mathcal{C}$-semigroup with full cone such that $\mathrm{PF}_{\substack{\\\prec}}'(S) \neq \emptyset$ and $f \preceq_{\mathbb{N}^d} \mathrm{F}(S)_{\substack{\\\prec}}$ for all $f \in \mathrm{PF}_{\substack{\\\prec}}'(S)$. If for each $g \in \cone(S) \cap \mathbb{N}^d$ we have: 
	\[ g \in S \iff \mathrm{F}(S)_{\substack{\\\prec}} - g \notin S \text{ and } g \notin \mathrm{PF}_{\substack{\\\prec}}' (S), \]
	then $S$ is a $\prec$-almost symmetric semigroup.
\end{proposition}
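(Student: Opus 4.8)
The plan is to verify directly the defining closure property of $\prec$-almost symmetry. Since $\mathrm{PF}_{\prec}'(S) \neq \emptyset$ is already assumed, it suffices to show that for an arbitrary $g \in \mathrm{PF}_{\prec}'(S)$ one has $\mathrm{F}(S)_{\prec} - g \in \mathrm{PF}_{\prec}'(S)$. So I would fix such a $g$, set $h = \mathrm{F}(S)_{\prec} - g$, and let the whole argument consist of feeding $h$ into the assumed biconditional and reading off the conclusion.

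First I would check that $h$ is an admissible input, i.e. $h \in \cone(S) \cap \mathbb{N}^d$, since the biconditional is only asserted for such elements. This is precisely where the two standing hypotheses enter: the assumption $g \preceq_{\mathbb{N}^d} \mathrm{F}(S)_{\prec}$ guarantees $h \in \mathbb{N}^d$, while the full-cone hypothesis gives $\cone(S) \cap \mathbb{N}^d = \mathbb{N}^d$, so $h$ lies in the domain of the biconditional. I would also record that $h \neq 0$, because $g \in \mathrm{PF}_{\prec}'(S)$ forces $g \neq \mathrm{F}(S)_{\prec}$.

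Next I would show $h \notin S$: if instead $h \in S$, then $h \in S \setminus \{0\}$, and since $g \in \mathrm{PF}(S)$ the pseudo-Frobenius property yields $g + h \in S$; but $g + h = \mathrm{F}(S)_{\prec}$, contradicting $\mathrm{F}(S)_{\prec} \in \mathcal{H}(S)$. Finally I would apply the biconditional to $h$: since $\mathrm{F}(S)_{\prec} - h = g \notin S$, the clause ``$\mathrm{F}(S)_{\prec} - h \notin S$'' holds, so the right-hand side collapses to ``$h \notin \mathrm{PF}_{\prec}'(S)$''; as $h \notin S$ makes the left-hand side false, the equivalence forces $h \in \mathrm{PF}_{\prec}'(S)$, i.e. $\mathrm{F}(S)_{\prec} - g \in \mathrm{PF}_{\prec}'(S)$, which is exactly the required closure property. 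I do not expect a serious obstacle here: the core of the proof is a careful logical unwinding of the biconditional, and the only steps that genuinely need attention are confirming $h \in \cone(S)\cap\mathbb{N}^d$ (which is where both hypotheses are indispensable) and noting the harmless fact $h \neq 0$ that licenses the use of the pseudo-Frobenius property.
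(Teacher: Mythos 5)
Your proof is correct and follows essentially the same route as the paper's: fix $g \in \mathrm{PF}_{\prec}'(S)$, use the full-cone and $g \preceq_{\mathbb{N}^d} \mathrm{F}(S)_{\prec}$ hypotheses to place $h = \mathrm{F}(S)_{\prec} - g$ in the domain of the biconditional, show $h \notin S$ via the pseudo-Frobenius property of $g$, and then unwind the biconditional at $h$ to conclude $h \in \mathrm{PF}_{\prec}'(S)$. The only cosmetic difference is that the paper reads off the disjunction ``$g \in S$ or $h \in \mathrm{PF}_{\prec}'(S)$'' and discards the first alternative, whereas you simplify the right-hand side first using $g \notin S$; these are the same logical step.
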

\begin{proof}
Suppose that for each $g \in \mathrm{cone}(S) \cap \mathbb{N}^d$, $g \in S \iff \F(S)_{\substack{\\\prec}} - g \notin S \text{ and } g \notin \mathrm{PF}_{\substack{\\\prec}}' (S).$ Let $f \in \mathrm{PF}_{\substack{\\\prec}}' (S)$. Since $f \preceq_{\mathbb{N}^d} \mathrm{F}(S)_{\substack{\\\prec}}$, we have $\mathrm{F}(S)_{\substack{\\\prec}} - f \in \mathbb{N}^d = \mathrm{cone}(S) \cap \mathbb{N}^d $. Also since $f \in \mathrm{PF}_{\substack{\\\prec}}' (S)$, this implies $\mathrm{F}(S)_{\substack{\\\prec}} - f \notin S$. Therefore, we get $\mathrm{F}(S)_{\substack{\\\prec}} - (\mathrm{F}(S)_{\substack{\\\prec}} - f) \in S$ or $\mathrm{F}(S)_{\substack{\\\prec}} - f \in \mathrm{PF}_{\substack{\\\prec}}' (S)$. If $\mathrm{F}(S)_{\substack{\\\prec}} - (\mathrm{F}(S)_{\substack{\\\prec}} - f) \in S$ then $f \in S$, which is a contradiction. Therefore, $\mathrm{F}(S)_{\substack{\\\prec}} - f \in \mathrm{PF}_{\substack{\\\prec}}' (S)$. Hence, $S$ is a $\prec$-almost symmetric semigroup.
\end{proof}

\begin{proposition}\label{cardhSiff}
Let $S$ be a $\mathcal{C}$-semigroup with full cone such that $\mathrm{PF}_{\substack{\\\prec}}'(S) \neq \emptyset$ and $f \preceq_{\mathbb{N}^d} \mathrm{F}(S)_{\substack{\\\prec}}$ for all $f \in \mathrm{PF}_{\substack{\\\prec}}'(S)$. If 
\[
 \vert \mathcal{H}(S) \setminus  \mathrm{PF}_{\substack{\\\prec}}'(S)  \vert = \vert \{ g \in S \mid g \preceq_{\mathbb{N}^d} \mathrm{F}(S)_{\substack{\\\prec}}\} \vert,
\]
then $S$ is a $\prec$-almost symmetric semigroup.
\end{proposition}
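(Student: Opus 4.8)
The plan is to treat this as the converse of Proposition \ref{cardH(S)}: there a canonical injection was built from $\mathcal{H}(S)\setminus\mathrm{PF}_{\prec}'(S)$ into $\{g\in S\mid g\preceq_{\mathbb{N}^d}\F(S)_{\prec}\}$, and here I would use the cardinality equality to force that injection to be a bijection and then read off $\prec$-almost symmetry from the resulting symmetry. Throughout write $F=\F(S)_{\prec}$, $A=\mathcal{H}(S)\setminus\mathrm{PF}_{\prec}'(S)$, $B=\{g\in S\mid g\preceq_{\mathbb{N}^d}F\}$, and $W=\{g\in\mathbb{N}^d\mid g\preceq_{\mathbb{N}^d}F\}$; note $W$ is finite, being a box, and $A,B\subseteq W$.

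The first and most important step is to show $\mathcal{H}(S)\subseteq W$, i.e. that every gap lies below $F$ in the usual partial order. For $g\in\mathcal{H}(S)$, \cite[Corollary 2.15]{op} gives $f\in\mathrm{PF}(S)$ with $f-g\in S$; since the cone is full, $f-g\in\mathbb{N}^d$, so $g\preceq_{\mathbb{N}^d}f$. If $f=F$ we are done, and otherwise $f\in\mathrm{PF}_{\prec}'(S)$, so by hypothesis $f\preceq_{\mathbb{N}^d}F$ and hence $g\preceq_{\mathbb{N}^d}f\preceq_{\mathbb{N}^d}F$. This is exactly where the two standing hypotheses (full cone, and $f\preceq_{\mathbb{N}^d}F$ for all $f\in\mathrm{PF}_{\prec}'(S)$) are consumed, and I expect it to be the main obstacle. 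With it in hand, fullness gives the disjoint decomposition $W=B\sqcup\mathcal{H}(S)=B\sqcup A\sqcup\mathrm{PF}_{\prec}'(S)$.

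Next I would consider the involution $\iota\colon W\to W$, $\iota(g)=F-g$, which is well defined and bijective because $g\preceq_{\mathbb{N}^d}F$ iff $F-g\preceq_{\mathbb{N}^d}F$. I would check $\iota(B)\subseteq A$: for $g\in B$ we have $F-g\in\mathbb{N}^d$ and $F-g\notin S$ (else $F=g+(F-g)\in S$, contradicting $F\in\mathcal{H}(S)$), so $F-g\in\mathcal{H}(S)$ by fullness; moreover $F-g\notin\mathrm{PF}_{\prec}'(S)$, since if $g\neq 0$ then $g\in S^{\ast}$ forces $(F-g)+g=F\in S$, while if $g=0$ then $F-g=F\notin\mathrm{PF}_{\prec}'(S)$. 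Thus $\iota|_B\colon B\to A$ is an injection, and the hypothesis $|A|=|B|$ together with finiteness upgrades it to a bijection, giving $\iota(B)=A$.

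Finally I would close the loop using the involution property. Applying $\iota$ to $\iota(B)=A$ yields $\iota(A)=B$, so $\iota$ maps $B\sqcup A$ bijectively onto itself; being a bijection of $W=B\sqcup A\sqcup\mathrm{PF}_{\prec}'(S)$, it must then carry the remaining block $\mathrm{PF}_{\prec}'(S)$ onto itself. Hence $F-f\in\mathrm{PF}_{\prec}'(S)$ for every $f\in\mathrm{PF}_{\prec}'(S)$, and since $\mathrm{PF}_{\prec}'(S)\neq\emptyset$ by hypothesis, $S$ is $\prec$-almost symmetric. Equivalently, $\iota(A)=B$ says $F-g\in S$ for all $g\in A$, which is precisely the biconditional of Proposition \ref{almostsymmetriciff}, so one may instead conclude by invoking that proposition.
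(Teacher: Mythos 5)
Your proof is correct, and its core coincides with the paper's: both construct the injection $g \mapsto \F(S)_{\prec} - g$ from $B = \{ g \in S \mid g \preceq_{\mathbb{N}^d} \F(S)_{\prec}\}$ into $A = \mathcal{H}(S) \setminus \mathrm{PF}_{\prec}'(S)$ (your $\iota|_B$ is exactly the paper's map $\psi$, checked by the same two observations that $\F(S)_{\prec}-g$ is a gap and cannot be a pseudo-Frobenius element other than $\F(S)_{\prec}$), and both use the cardinality hypothesis plus finiteness to upgrade it to a bijection. The divergence is in the finish. The paper reads off from the inverse of $\psi$ that every $g$ in the cone with $g \notin S \cup \mathrm{PF}_{\prec}'(S)$ and $\F(S)_{\prec}-g \notin S$ leads to a contradiction, and then invokes Proposition \ref{almostsymmetriciff} to convert this into almost symmetry. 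You instead prove an extra lemma --- every gap satisfies $g \preceq_{\mathbb{N}^d} \F(S)_{\prec}$, obtained correctly from \cite[Corollary 2.15]{op} together with the standing hypothesis on $\mathrm{PF}_{\prec}'(S)$ --- which yields the three-block decomposition $W = B \sqcup A \sqcup \mathrm{PF}_{\prec}'(S)$ of the box below $\F(S)_{\prec}$; since the involution $\iota$ permutes $A \sqcup B$, it must permute the remaining block $\mathrm{PF}_{\prec}'(S)$, and that is the definition of $\prec$-almost symmetry. Your route costs one additional lemma but is self-contained and makes the combinatorial symmetry of the box transparent, whereas the paper's route is shorter because Proposition \ref{almostsymmetriciff} has already done the work of deducing the symmetry of $\mathrm{PF}_{\prec}'(S)$ from the complementarity condition off of it. Both arguments are valid, and you correctly note that your conclusion could also be funneled through Proposition \ref{almostsymmetriciff}.
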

\begin{proof}
let $\vert \mathcal{H}(S) \setminus  \mathrm{PF}_{\substack{\\\prec}}'(S)  \vert = \vert \{ g \in S \mid g \preceq_{\mathbb{N}^d} \mathrm{F}(S)_{\substack{\\\prec}}\} \vert$. Consider a map 
	\begin{align*}
		\psi : \{ g \in S \mid g \preceq_{\mathbb{N}^d} \F(S)_{\substack{\\\prec}}\} \longrightarrow  \vert \mathcal{H}(S) \setminus  \mathrm{PF}_{\substack{\\\prec}}'(S)  \vert \text{ such that } \psi(g) = \F(S)_{\substack{\\\prec}} - g.
	\end{align*}
	Let $g \in S$ and $g \preceq_{\mathbb{N}^d} \F(S)_{\substack{\\\prec}}.$ This implies $\F(S)_{\substack{\\\prec}} - g \notin S$ and $\F(S)_{\substack{\\\prec}} - g \in \mathbb{N}^d = \mathrm{cone}(S) \cap S.$ Thus $\F(S)_{\substack{\\\prec}} - g \in \mathcal{H}(S).$ Since $g \in S,$ we get $\F(S)_{\substack{\\\prec}} - g \notin \mathrm{PF}_{\substack{\\\prec}}'(S).$ Thus $\psi$ is well defined and clearly injective also. Since $\vert \mathcal{H}(S) \setminus  \mathrm{PF}_{\substack{\\\prec}}'(S)  \vert = \vert \{ g \in S \mid g \preceq_{\mathbb{N}^d} \mathrm{F}(S)_{\substack{\\\prec}}\} \vert$, $\psi$ becomes a bijective map. Therefore, there exists a map $\delta : \mathcal{H}(S) \setminus  \mathrm{PF}_{\substack{\\\prec}}'(S)  \longrightarrow \{ g \in S \mid g \preceq_{\mathbb{N}^d} \F(S)_{\substack{\\\prec}}\}$ such that  $\delta \circ \psi = \mathrm{Id}_{\{ g \in S \mid g \preceq_{\mathbb{N}^d} \F(S)_{\substack{\\\prec}}\}}$ and $\psi \circ \delta = \mathrm{Id}_{\mathcal{H}(S) \setminus  \mathrm{PF}_{\substack{\\\prec}}'(S)}.$ Let $g \in \mathcal{H}(S) \setminus  \mathrm{PF}_{\substack{\\\prec}}'(S).$ Then $g = (\psi \circ \delta) (g) = \F(S)_{\substack{\\\prec}} - \delta (g).$ This implies that $\delta(g) = \F(S)_{\substack{\\\prec}} - g.$ Now, to prove $S$ is a $\prec$-almost symmetric semigroup, we use Proposition \ref{almostsymmetriciff}. It is enough to prove that for $g \in \mathbb{N}^d \setminus \mathrm{PF}_{\substack{\\\prec}}'(S)$, if $\F(S)_{\substack{\\\prec}} - g \notin S$ then $g \in S$. Suppose $g \notin S$, then $g \in \mathcal{H}(S) \setminus  \mathrm{PF}_{\substack{\\\prec}}'(S)$. This implies that $\delta(g) = \F(S)_{\substack{\\\prec}} - g \in S$. Hence, $S$ is a $\prec$-almost symmetric semigroup.
\end{proof}

\begin{proposition}\label{almostsymmsecond}
Let $S$ be a $\mathcal{C}$-semigroup with full cone such that $\mathrm{PF}_{\substack{\\\prec}}'(S) \neq \emptyset$ and $f \preceq_{\mathbb{N}^d} \mathrm{F}(S)_{\substack{\\\prec}}$ for all $f \in \mathrm{PF}_{\substack{\\\prec}}'(S)$. Then the following are equivalent:
\begin{enumerate}
\item[(a)] $S$ is $\prec$-almost symmetric.
\item[(b)] $\Omega_{\prec} = S \cup \mathrm{PF}_{\substack{\\\prec}}' (S)$.
\item[(c)] $\Omega_{\prec} + S^* = S^* $.
\item[(d)] $\vert \mathrm{PF}(S) \vert = \vert \Omega_{\prec} \setminus S \vert + 1$.
\end{enumerate}
Moreover, (b) $\iff$ (c) $\iff$ (d) holds without the full cone condition and $f \preceq_{\mathbb{N}^d} \mathrm{F}(S)_{\substack{\\\prec}}$ for all $f \in \mathrm{PF}_{\substack{\\\prec}}'(S)$.
\end{proposition}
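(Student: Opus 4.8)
The plan is to lean on Proposition \ref{almostsymmfirst}, which already supplies every forward implication out of (a): its three conclusions are, in the present labelling, exactly (b), (c) and (d). Thus (a) $\Rightarrow$ (b), (a) $\Rightarrow$ (c) and (a) $\Rightarrow$ (d) need no further argument, and the genuine content is the chain (b) $\iff$ (c) $\iff$ (d) (the ``moreover'' claim) together with a single return arrow, for which I would choose (b) $\Rightarrow$ (a).

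The computations all rest on one lemma valid for \emph{any} $\mathcal{C}$-semigroup, requiring neither the full-cone hypothesis nor $f \preceq_{\mathbb{N}^d} \F(S)_{\prec}$: namely $\Omega_{\prec}\setminus S \subseteq \mathcal{H}(S)$ and $\PF'_{\prec}(S) \subseteq \Omega_{\prec}$. The first is immediate from the definitions, since $z \in \Omega_{\prec}\setminus S$ lies in $(\cone(S)\setminus S)\cap\mathbb{N}^d$. For the second I would first note $\F(S)_{\prec}\notin\Omega_{\prec}$, because $\F(S)_{\prec}-\F(S)_{\prec}=0\in S$, and then show $\F(S)_{\prec}-f\notin S$ for each $f\in\PF'_{\prec}(S)$. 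The one delicate point is that $\F(S)_{\prec}-f$ may fail to lie in $\mathbb{N}^d$, in which case it is trivially not in $S$; otherwise $\F(S)_{\prec}-f\in S$ would force $\F(S)_{\prec}-f\in S^{*}$ (it is nonzero as $f\neq\F(S)_{\prec}$), and the pseudo-Frobenius property of $f$ would give $\F(S)_{\prec}=f+(\F(S)_{\prec}-f)\in S$, contradicting $\F(S)_{\prec}\in\mathcal{H}(S)$. Combining the two containments gives $\PF'_{\prec}(S)\subseteq\Omega_{\prec}\setminus S\subseteq\mathcal{H}(S)$ unconditionally.

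With this lemma the ``moreover'' chain is short. For (b) $\Rightarrow$ (c) I would compute $\Omega_{\prec}+S^{*}=(S\cup\PF'_{\prec}(S))+S^{*}=S^{*}$, using $S+S^{*}=S^{*}$ and that each $f\in\PF'_{\prec}(S)\subseteq\PF(S)$ satisfies $f+S^{*}\subseteq S^{*}$. For (c) $\Rightarrow$ (b) I would read $\Omega_{\prec}+S^{*}\subseteq S^{*}$ as saying every $z\in\Omega_{\prec}\setminus S\subseteq\mathcal{H}(S)$ obeys $z+S^{*}\subseteq S$, i.e. $z\in\PF(S)$; since $\F(S)_{\prec}\notin\Omega_{\prec}$ this yields $\Omega_{\prec}\setminus S\subseteq\PF'_{\prec}(S)$, and the reverse containment from the lemma gives $\Omega_{\prec}=S\cup\PF'_{\prec}(S)$. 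The equivalence (b) $\iff$ (d) is purely numerical: $\mathcal{H}(S)$ is finite, the lemma gives $\PF'_{\prec}(S)\subseteq\Omega_{\prec}\setminus S$, and $|\PF(S)|=|\PF'_{\prec}(S)|+1$, so the cardinality identity in (d) forces the set equality $\PF'_{\prec}(S)=\Omega_{\prec}\setminus S$, which is (b); the converse is the same count read backwards.

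Finally, under the full-cone and $f\preceq_{\mathbb{N}^d}\F(S)_{\prec}$ hypotheses I would close the loop with (b) $\Rightarrow$ (a) via Proposition \ref{almostsymmetriciff}. Since the cone is full, $\cone(S)\cap\mathbb{N}^d=\mathbb{N}^d$, and (b) says that for $g\in\mathbb{N}^d$ one has $g\in\Omega_{\prec}\iff g\in S\cup\PF'_{\prec}(S)$; unwinding $\Omega_{\prec}$, this rearranges to ``$g\in S$ iff $\F(S)_{\prec}-g\notin S$ and $g\notin\PF'_{\prec}(S)$'', which is precisely the hypothesis of Proposition \ref{almostsymmetriciff}, whence $S$ is $\prec$-almost symmetric. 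I expect the main obstacle to be the lemma itself, specifically verifying $\PF'_{\prec}(S)\subseteq\Omega_{\prec}$ in the full generality of the ``moreover'' statement (where no order relation between the pseudo-Frobenius elements and $\F(S)_{\prec}$ is assumed), since that is exactly where the case $\F(S)_{\prec}-f\notin\mathbb{N}^d$ must be split off from the pseudo-Frobenius contradiction.
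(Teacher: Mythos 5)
Your proposal is correct and follows essentially the same route as the paper: (a)~$\Rightarrow$~(b),(c),(d) via Proposition~\ref{almostsymmfirst}, the return arrow (b)~$\Rightarrow$~(a) via Proposition~\ref{almostsymmetriciff}, and the chain among (b),(c),(d) driven by the two containments $S \cup \mathrm{PF}_{\prec}'(S) \subseteq \Omega_{\prec}$ and (under (c)) $\Omega_{\prec} \subseteq S \cup \mathrm{PF}(S)$. The only difference is cosmetic: you route the cardinality equivalence through (b) rather than (c) and you spell out the unconditional containment $\mathrm{PF}_{\prec}'(S) \subseteq \Omega_{\prec}$ (including the case $\mathrm{F}(S)_{\prec} - f \notin \mathbb{N}^d$), which the paper simply asserts with ``we always have.''
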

\begin{proof}
(a) $\iff$ (b): (a) implies (b) is immediate by Proposition \ref{almostsymmfirst}. Now suppose $\Omega_{\prec} = S \cup \mathrm{PF}_{\substack{\\\prec}}' (S)$. Let $g \in \mathrm{cone}(S) \cap \mathbb{N}^d$. If $\F(S)_{\substack{\\\prec}} - g \notin S$ then $ g \in \Omega_{\prec}$. This implies either $g \in S$ or $g \in \mathrm{PF}_{\substack{\\\prec}}' (S)$. Hence $S$ is $\prec$-almost symmetric by Proposition \ref{almostsymmetriciff}.

\medskip

(b) $\iff$ (c): (b) implies (c) is obvious. Now suppose $\Omega_{\prec} + S^* = S^* $. This implies $\Omega_{\prec}\subseteq S \cup \mathrm{PF}(S)$. Since $\F(S)_{\substack{\\\prec}} \notin \Omega_{\prec}$, we get   $\Omega_{\prec}\subseteq S \cup \mathrm{PF}_{\substack{\\\prec}}' (S)$. On the other hand, we always have $S \cup \mathrm{PF}_{\substack{\\\prec}}' (S) \subseteq \Omega_{\prec}$. Hence $\Omega_{\prec} = S \cup \mathrm{PF}_{\substack{\\\prec}}' (S)$. 

\medskip

(c) $\iff$ (d): Suppose $\Omega_{\prec} + S^* = S^* $. This implies $\Omega_{\prec} \subseteq S \cup \mathrm{PF}_{\substack{\\\prec}}' (S)$. Therefore $\vert \Omega_{\prec} \setminus S \vert \leq \vert \mathrm{PF}_{\substack{\\\prec}}' (S) \vert$. Thus $\vert \Omega_{\prec} \setminus S \vert + 1 \leq \vert \mathrm{PF}(S) \vert$. On the other hand we have $S \cup \mathrm{PF}_{\substack{\\\prec}}' (S) \subseteq \Omega_{\prec}$. This implies have $ \mathrm{PF}_{\substack{\\\prec}}' (S) \subseteq \Omega_{\prec} \setminus S$. Therefore $ \vert \mathrm{PF}_{\substack{\\\prec}}' (S) \vert \leq \vert \Omega_{\prec} \setminus S \vert $. Thus $\vert \mathrm{PF}(S) \vert \leq \vert \Omega_{\prec} \setminus S \vert + 1 $. Hence $\vert \mathrm{PF}(S) \vert = \vert \Omega_{\prec} \setminus S \vert + 1$. Conversely, suppose $\vert \mathrm{PF}(S) \vert = \vert \Omega_{\prec} \setminus S \vert + 1$. Since $\mathrm{PF}_{\substack{\\\prec}}' (S) \subseteq  \Omega_{\prec}$, we get $ \Omega_{\prec} \setminus S = \mathrm{PF}_{\substack{\\\prec}}' (S)$. This implies $ \Omega_{\prec} = S \cup \mathrm{PF}_{\substack{\\\prec}}' (S)$, and hence $\Omega_{\prec} + S^* = S^*$.

\end{proof}

\section{Extended Wilf's conjecture}
 For a numerical semigroup $S,$ Wilf in \cite{wilf}, proposed a conjecture related to the Diophantine Frobenius Problem that claims the inequality 
\[ \F(S)+1 \leq e(S) \cdot |\{ s \in S \mid s < \F(S) \}| \] 
is true. While this conjecture remains open, a potential extension of Wilf's conjecture to affine semigroups is studied in \cite{Wilfconjecture}.

\begin{definition}[{\cite[Definition 1]{Wilfconjecture}}]
 Let $S$ be a $\mathcal{C}$-semigroup and $\prec$ be a monomial order satisfying that every monomial is preceded only by a finite number of monomials. Define the Frobenius number of $S$ as $\mathcal{N}(\F(S)_{\substack{\\\prec}}) = | \mathcal{H}(S)| + | \{ g \in S \mid g \prec \F(S)_{\substack{\\\prec}}\}|.$
\end{definition}

Observe that if $S$ is a numerical semigroup, then $\mathcal{N}(\F(S)_{\substack{\\\prec}}) = \F(S),$ the Frobenius number of $S.$

\begin{conjecture} [{\bf Extension of Wilf's conjecture} {\cite[Conjecture 14]{Wilfconjecture}}]
	Let $S$ be a $\mathcal{C}$-semigroup and $\prec$ be a monomial order satisfying that every monomial is preceded only by a finite number of monomials. The extended Wilf's conjecture is
	\[ | \{ g \in S \mid g \prec \F(S)_{\substack{\\\prec}} \} | \cdot e(S) \geq \mathcal{N}(\F(S)_{\substack{\\\prec}}) + 1, \]
	where $e(S)$ denotes the embedding dimension of $S.$
\end{conjecture}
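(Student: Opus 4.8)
The extended Wilf conjecture is open in full generality (the case $d = 1$ is the classical Wilf conjecture), so I would aim to establish it for the structured families the earlier sections control, namely $\prec$-almost symmetric $\mathcal{C}$-semigroups with full cone, and in particular those of Betti-type three. The first step is a reformulation. Writing $H = |\mathcal{H}(S)|$ and $L = |\{g \in S \mid g \prec \F(S)_{\prec}\}|$, the definition of $\mathcal{N}(\F(S)_{\prec})$ gives $\mathcal{N}(\F(S)_{\prec}) = H + L$, so the conjectured inequality $L \cdot e(S) \geq \mathcal{N}(\F(S)_{\prec}) + 1$ is equivalent to the single estimate
\[ L \, (e(S) - 1) \geq H + 1. \]
This isolates exactly what must be proved: a lower bound on the number of small semigroup elements in terms of the number of gaps and the embedding dimension.

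The second step is to extract such a bound from the almost-symmetric structure. Since the term orders under consideration have $0$ as least element (every monomial has only finitely many predecessors), the relation $a \preceq_{\mathbb{N}^d} b$ implies $a \prec b$ whenever $a \neq b$; in particular every $g \in S$ with $g \preceq_{\mathbb{N}^d} \F(S)_{\prec}$ satisfies $g \prec \F(S)_{\prec}$ (as $\F(S)_{\prec} \notin S$), so
\[ L \geq |\{g \in S \mid g \preceq_{\mathbb{N}^d} \F(S)_{\prec}\}|. \]
By Proposition \ref{cardH(S)}, for a $\prec$-almost symmetric $\mathcal{C}$-semigroup with full cone the right-hand count equals $|\mathcal{H}(S) \setminus \mathrm{PF}'_{\prec}(S)| = H - \bigl(\beta\text{-}\mathrm{t}(S) - 1\bigr)$. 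This yields the key inequality $L \geq H - \beta\text{-}\mathrm{t}(S) + 1$, and substituting it into the reformulated conjecture produces the clean sufficient condition
\[ (e(S) - 1)\bigl(H - \beta\text{-}\mathrm{t}(S) + 1\bigr) \geq H + 1, \]
which I would record as Proposition \ref{wilfssufficient}, together with a corollary isolating the range $e(S) \geq 3$.

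Finally, for Betti-type three I set $\beta\text{-}\mathrm{t}(S) = 3$, so the sufficient condition collapses to $(e(S) - 2)(H - 2) \geq 3$. Here I would use that a full-cone $\mathcal{C}$-semigroup in $\mathbb{N}^d$ with $d \geq 2$ cannot be generated too economically --- along each coordinate axis one needs a finite-complement (hence non-trivial) set of generators, forcing $e(S)$ to be reasonably large (indeed $e(S) \geq 4$ in the plane) --- while $|\mathrm{PF}(S)| = 3 \leq H$ forces $H \geq 3$. With these bounds the inequality holds for all but a handful of extremal configurations. The main obstacle is precisely this small-gap regime, for instance $H = 3$ (where all three gaps are pseudo-Frobenius), in which the estimate $L \geq H - \beta\text{-}\mathrm{t}(S) + 1$ becomes too lossy: there one must either sharpen the count of elements of $S$ lying $\prec \F(S)_{\prec}$ beyond those supplied by Proposition \ref{cardH(S)}, or rule the configuration out using full-cone-ness directly. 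I expect controlling this interaction between $e(S)$, $H$, and $\beta\text{-}\mathrm{t}(S)$ when the number of gaps is small to be the crux of the argument.
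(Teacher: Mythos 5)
This statement is a conjecture, so the paper offers no proof of it; your proposal correctly recognizes this and instead sketches the paper's own partial results, namely the sufficient condition of Proposition \ref{wilfssufficient} (via the bijection of Proposition \ref{cardH(S)} and the inclusion $\{g \in S \mid g \preceq_{\mathbb{N}^d} \F(S)_{\prec}\} \subseteq \{g \in S \mid g \prec \F(S)_{\prec}\}$), Corollary \ref{wilfsuffcor}, and the Betti-type-three case of Theorem \ref{wilfbettithree}, where the ``small-gap'' configurations you flag as the crux are exactly the cases $\vert \{ g \in S \mid g \prec \F(S)_{\prec}\}\vert \in \{1,2\}$ that the paper rules out by enumerating the possible sets $\mathcal{H}(S)$ and showing the embedding dimension is forced to be large. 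Your reformulated sufficient condition $(e(S)-1)(H-\beta\text{-}\mathrm{t}(S)+1) \geq H+1$ is just an algebraic repackaging of the paper's bound $\mathcal{N}(\F(S)_{\prec})+1 \leq 2\vert \{ g \in S \mid g \prec \F(S)_{\prec}\}\vert + \vert\mathrm{PF}(S)\vert$, so the approach is essentially the same.
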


\begin{proposition}\label{wilfssufficient}
Let $S$ be a $\prec$-almost symmetric $\mathcal{C}$-semigroup with full cone. If \[ 2 ~ \vert \{ g \in S \mid g \prec \F(S)_{\substack{\\\prec}} \} \vert + \vert \mathrm{PF}(S) \vert \leq e(S) \cdot \vert \{ g \in S \mid g \prec \F(S)_{\substack{\\\prec}}\} \vert, \]
then $S$ satisfies the extended Wilf's conjecture.
\end{proposition}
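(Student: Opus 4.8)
The plan is to turn the desired inequality into the stated hypothesis by means of the counting identity in Proposition \ref{cardH(S)}. Write $N = \vert \{ g \in S \mid g \prec \mathrm{F}(S)_{\prec} \} \vert$ for the quantity common to both the hypothesis and the conjecture, and set $H = \vert \mathcal{H}(S) \vert$. First I would unwind the definition of $\mathcal{N}(\mathrm{F}(S)_{\prec}) = \vert \mathcal{H}(S)\vert + \vert \{ g \in S \mid g \prec \mathrm{F}(S)_{\prec}\}\vert = H + N$, so that the extended Wilf's conjecture for $S$ is exactly the inequality $e(S)\cdot N \geq H + N + 1$. The goal is therefore to bound $H$ from above in terms of $N$ and $\vert \mathrm{PF}(S)\vert$, and the full cone hypothesis is precisely what permits the use of Proposition \ref{cardH(S)}.

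The key input is that identity. Since $\mathrm{PF}_{\prec}'(S) \subseteq \mathcal{H}(S)$ and $\vert \mathrm{PF}_{\prec}'(S)\vert = \vert \mathrm{PF}(S)\vert - 1$, the left-hand side of Proposition \ref{cardH(S)} equals $H - \vert \mathrm{PF}(S)\vert + 1$. Denoting the right-hand side by $M = \vert \{ g \in S \mid g \preceq_{\mathbb{N}^d} \mathrm{F}(S)_{\prec}\}\vert$, the proposition reads $H = M + \vert \mathrm{PF}(S)\vert - 1$. Substituting this into the target, the conjecture $e(S)\cdot N \geq H + N + 1$ is equivalent to $e(S)\cdot N \geq N + M + \vert \mathrm{PF}(S)\vert$.

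The remaining ingredient is the elementary inequality $M \leq N$. If $g \in S$ satisfies $g \preceq_{\mathbb{N}^d} \mathrm{F}(S)_{\prec}$, then $g \neq \mathrm{F}(S)_{\prec}$ because $\mathrm{F}(S)_{\prec} \in \mathcal{H}(S)$ lies outside $S$; and since $\prec$ is compatible with addition on $\mathbb{N}^d$, the relation $g \preceq_{\mathbb{N}^d} \mathrm{F}(S)_{\prec}$ with $g \neq \mathrm{F}(S)_{\prec}$ forces $g \prec \mathrm{F}(S)_{\prec}$. Hence $\{ g \in S \mid g \preceq_{\mathbb{N}^d} \mathrm{F}(S)_{\prec}\} \subseteq \{ g \in S \mid g \prec \mathrm{F}(S)_{\prec}\}$, giving $M \leq N$. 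With this in hand, the hypothesis $e(S)\cdot N \geq 2N + \vert \mathrm{PF}(S)\vert$ yields
\[ e(S)\cdot N \geq 2N + \vert \mathrm{PF}(S)\vert \geq N + M + \vert \mathrm{PF}(S)\vert = H + N + 1, \]
which is exactly the extended Wilf's conjecture for $S$.

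I do not expect a genuine obstacle here: once Proposition \ref{cardH(S)} is invoked, the argument collapses to a short chain of inequalities. The only point that needs care is the passage from the componentwise order $\preceq_{\mathbb{N}^d}$ to the term order $\prec$ in establishing $M \leq N$, which relies on the compatibility of $\prec$ with addition together with $\mathrm{F}(S)_{\prec} \notin S$; everything else is bookkeeping with the identity $H = M + \vert \mathrm{PF}(S)\vert - 1$.
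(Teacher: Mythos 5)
Your proposal is correct and follows essentially the same route as the paper: both invoke Proposition \ref{cardH(S)} to rewrite $\vert\mathcal{H}(S)\vert$ as $\vert\{g \in S \mid g \preceq_{\mathbb{N}^d} \mathrm{F}(S)_{\prec}\}\vert + \vert\mathrm{PF}(S)\vert - 1$ and then bound that first term by $\vert\{g \in S \mid g \prec \mathrm{F}(S)_{\prec}\}\vert$ via the inclusion of the two sets. Your only addition is an explicit justification of that inclusion (using compatibility of $\prec$ with addition and $\mathrm{F}(S)_{\prec}\notin S$), which the paper merely asserts.
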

\begin{proof}
Observe that
\begin{center}
$ \{ g \in S \mid g \preceq_{\mathbb{N}^d} \F(S)_{\substack{\\\prec}} \}  \subseteq \{ g \in S \mid g \prec \F(S)_{\substack{\\\prec}} \},$ for any term order $\prec.$
\end{center} 
	If $S$ is $\prec$-almost symmetric, then from Proposition \ref{cardH(S)}, we see that
	\begin{align*} 
	\mathcal{N}(\F(S)_{\substack{\\\prec}}) &
	= \vert \mathcal{H}(S) \vert + \vert \{ g \in S \mid g \prec \F(S)_{\substack{\\\prec}}\} \vert \\
	& = \vert \{ g \in S \mid g \preceq_{\mathbb{N}^d} \mathrm{F}(S)_{\substack{\\\prec}}\} \vert + \vert \mathrm{PF}_{\substack{\\\prec}}'(S) \vert + \vert \{ g \in S \mid g \prec \F(S)_{\substack{\\\prec}}\} \vert \\
	&\leq   2 ~ \vert \{ g \in S \mid g \prec \F(S)_{\substack{\\\prec}}\} \vert + \vert \mathrm{PF}_{\substack{\\\prec}}'(S) \vert.
	\end{align*}
Therefore, we get 
\begin{align*} 
	\mathcal{N}(\F(S)_{\substack{\\\prec}}) + 1 & \leq 2 ~ \vert \{ g \in S \mid g \prec \F(S)_{\substack{\\\prec}}\} \vert + \vert \mathrm{PF}(S) \vert.
	\end{align*}
\end{proof}

\begin{coro}\label{wilfsuffcor}
Let $S$ be a $\prec$-almost symmetric $\mathcal{C}$-semigroup with full cone. If 
\[\vert \mathrm{PF}(S) \vert \leq \vert \{ g \in S \mid g \prec \F(S)_{\substack{\\\prec}}\} \vert, \]
then $S$ satisfies the extended Wilf's conjecture.
\end{coro}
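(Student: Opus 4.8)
The plan is to obtain the corollary as an immediate consequence of Proposition \ref{wilfssufficient}. Abbreviate $w = \vert\{g\in S\mid g\prec \F(S)_{\substack{\\\prec}}\}\vert$ and $e=e(S)$. By Proposition \ref{wilfssufficient}, $S$ satisfies the extended Wilf's conjecture as soon as
\[ 2w + \vert \mathrm{PF}(S)\vert \le e\cdot w. \]
The hypothesis of the corollary is $\vert\mathrm{PF}(S)\vert\le w$, whence $2w+\vert\mathrm{PF}(S)\vert\le 3w$, so it suffices to show $3w\le e\cdot w$; as $w\ge 0$, this reduces to the bound $e(S)\ge 3$ on the embedding dimension.

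Thus the genuine content is the inequality $e(S)\ge 3$, which I would prove by a short case analysis on the ambient dimension $d$. Since the full cone $\mathbb{Q}_{\ge 0}^d$ has exactly $d$ extremal rays and each extremal ray must carry a minimal generator, we have $e(S)\ge d$, which already settles $d\ge 3$. For $d=2$ with full cone, two generators can span $\mathbb{Q}_{\ge 0}^2$ only if one lies on each coordinate axis, say $S=\langle(a,0),(0,b)\rangle=\{(ma,nb)\mid m,n\in\mathbb{N}\}$; unless $a=b=1$ (which gives $S=\mathbb{N}^2$, $\mathcal{H}(S)=\emptyset$, not almost symmetric) the set $\mathcal{H}(S)=\mathbb{N}^2\setminus S$ is infinite, contradicting that $S$ is a $\mathcal{C}$-semigroup. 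For $d=1$, $S$ is, after normalization, a numerical semigroup; being $\prec$-almost symmetric forces $\mathrm{PF}_{\substack{\\\prec}}'(S)\ne\emptyset$ and hence $\vert\mathrm{PF}(S)\vert\ge 2$, whereas numerical semigroups of embedding dimension at most two have $\vert\mathrm{PF}(S)\vert\le 1$ (the one-generated case is $\mathbb{N}$ with empty $\mathrm{PF}$, and two-generated numerical semigroups are complete intersections, hence symmetric of type one). Therefore $e(S)\le 2$ is impossible and $e(S)\ge 3$.

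With $e(S)\ge 3$ in hand the conclusion is immediate: $2w+\vert\mathrm{PF}(S)\vert\le 3w\le e\cdot w$, so the hypothesis of Proposition \ref{wilfssufficient} holds and $S$ satisfies the extended Wilf's conjecture. I expect the only nonroutine step to be the bound $e(S)\ge 3$---in particular, ruling out embedding dimension two in the planar and numerical cases---after which the argument is a single substitution into the already-established sufficient condition.
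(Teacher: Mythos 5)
Your proposal is correct and follows essentially the same route as the paper: substitute the hypothesis $\vert \mathrm{PF}(S)\vert \le \vert\{g\in S\mid g\prec \F(S)_{\prec}\}\vert$ into Proposition \ref{wilfssufficient} to get $\mathcal{N}(\F(S)_{\prec})+1\le 3\,\vert\{g\in S\mid g\prec \F(S)_{\prec}\}\vert$, and conclude from $e(S)\ge 3$. The paper simply asserts $e(S)\ge 3$ from the full-cone and MPD hypotheses (with a following remark that $e(S)=2$ would force $k[S]$ to be Cohen--Macaulay), whereas you supply a correct, more explicit case analysis on $d$; this is a harmless elaboration, not a different approach.
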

\begin{proof}
By Proposition \ref{wilfssufficient}, we have
\begin{align*} 
	\mathcal{N}(\F(S)_{\substack{\\\prec}}) + 1 & \leq 2 ~ \vert \{ g \in S \mid g \prec \F(S)_{\substack{\\\prec}}\} \vert + \vert \mathrm{PF}(S) \vert.
	\end{align*}
Since $\vert \mathrm{PF}(S) \vert \leq \vert \{ g \in S \mid g \prec \F(S)_{\substack{\\\prec}}\} \vert $, therefore we have
\begin{align*} 
	\mathcal{N}(\F(S)_{\substack{\\\prec}}) + 1 & \leq 3 ~ \vert \{ g \in S \mid g \prec \F(S)_{\substack{\\\prec}}\} \vert.
	\end{align*}
Since $S$ is a $\mathcal{C}$-semigroup with full cone, $e(S)$ has to be greater than or equal to $3$. This completes the proof.
\end{proof}

Note that if $S$ is an affine semigroup of $\mathbb{N}^d,$ where $d \geq 2,$ then the semigroup ring $k[S]$ is Cohen-Macaulay when $e(S)=2.$ Since our affine semigroups are MPD, we may assume that $e(S) \geq 3.$

\begin{theorem}\label{wilfbettithree}
Let $S$ be a $\prec$-almost symmetric $\mathcal{C}$-semigroup with full cone. If the Betti-type of $S$ is three, then $S$ satisfies the extended Wilf's conjecture.
\end{theorem}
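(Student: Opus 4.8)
The plan is to reduce the theorem to the sufficient conditions already established in Corollary \ref{wilfsuffcor} and Proposition \ref{wilfssufficient}. Since the Betti-type of $S$ is three, $|\mathrm{PF}(S)| = 3$, so $|\mathrm{PF}'_{\prec}(S)| = 2$; write $\mathrm{PF}'_{\prec}(S) = \{g_1, g_2\}$. Because $S$ is $\prec$-almost symmetric, the map $g \mapsto \mathrm{F}(S)_{\prec} - g$ is an involution of $\mathrm{PF}'_{\prec}(S)$; it cannot fix either $g_i$, since a fixed point would force $g_i = \mathrm{F}(S)_{\prec}/2$ and hence $g_1 = g_2$, so it must interchange the two elements and $g_1 + g_2 = \mathrm{F}(S)_{\prec}$. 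Setting $W = |\{g \in S \mid g \prec \mathrm{F}(S)_{\prec}\}|$, Corollary \ref{wilfsuffcor} tells us that it suffices to prove $W \geq |\mathrm{PF}(S)| = 3$.

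To bound $W$ from below I would invoke Proposition \ref{cardH(S)}. For $g \in S$ one has $g \preceq_{\mathbb{N}^d} \mathrm{F}(S)_{\prec} \Rightarrow g \prec \mathrm{F}(S)_{\prec}$, since the two can never be equal ($\mathrm{F}(S)_{\prec} \notin S$); hence $\{g \in S \mid g \preceq_{\mathbb{N}^d} \mathrm{F}(S)_{\prec}\} \subseteq \{g \in S \mid g \prec \mathrm{F}(S)_{\prec}\}$ and therefore $W \geq |\{g \in S \mid g \preceq_{\mathbb{N}^d} \mathrm{F}(S)_{\prec}\}| = |\mathcal{H}(S)| - |\mathrm{PF}'_{\prec}(S)| = |\mathcal{H}(S)| - 2$. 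Thus the target inequality $W \geq 3$ follows as soon as $|\mathcal{H}(S)| \geq 5$. To manufacture the missing gaps I would exploit that the cone is full, so $\mathcal{H}(S) = \mathbb{N}^d \setminus S$: for every minimal generator $a_i$ with $a_i \preceq_{\mathbb{N}^d} g_1$ the element $g_1 - a_i$ lies in $\mathbb{N}^d \setminus S = \mathcal{H}(S)$, since otherwise $g_1 = (g_1 - a_i) + a_i \in S$; the same holds for $g_2$. Together with $g_1, g_2, \mathrm{F}(S)_{\prec}$, these generator-translates should account for at least five distinct elements of $\mathcal{H}(S)$ in the generic situation, giving $W \geq 3$ and hence the conclusion via Corollary \ref{wilfsuffcor}.

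The main obstacle is the boundary behaviour, namely the cases $|\mathcal{H}(S)| \in \{3,4\}$, where the gap count above is not automatic and $W$ may be only $1$ or $2$, so that Corollary \ref{wilfsuffcor} no longer applies and one must instead verify the weaker inequality $(e(S) - 2)\,W \geq 3$ of Proposition \ref{wilfssufficient}. Here the standing hypotheses must be made to work: since $S$ has full cone and is $\mathrm{MPD}$ we already have $e(S) \geq 3$, and the expectation is that trimming only three or four lattice points from a full cone forces the embedding dimension $e(S)$ to be large (geometrically, many minimal generators must hug the boundary of the cone), which makes $(e(S) - 2)\,W \geq 3$ hold even when $W$ is small. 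Controlling these finitely many low-gap configurations — equivalently, establishing a lower bound on $e(S)$ in terms of the scarcity of gaps so that the pair $(e(S), W)$ always satisfies $(e(S) - 2)\,W \geq 3$ — is the delicate step; once it is settled, assembling Proposition \ref{cardH(S)}, Corollary \ref{wilfsuffcor} and Proposition \ref{wilfssufficient} completes the argument.
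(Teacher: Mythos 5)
Your overall architecture matches the paper's: reduce to $W:=\vert\{g\in S\mid g\prec \mathrm{F}(S)_{\prec}\}\vert\geq 3$ via Corollary \ref{wilfsuffcor}, and handle small $W$ through Proposition \ref{wilfssufficient}, which as you note leaves exactly the pairs $(W,e(S))\in\{(1,3),(1,4),(2,3)\}$ unresolved. But the resolution of those pairs is the actual mathematical content of the theorem, and you leave it as an ``expectation.'' Your attempt to force $\vert\mathcal{H}(S)\vert\geq 5$ by translating $g_1,g_2$ by minimal generators fails outright: if $g_1=e_i$ (a coordinate vector), no minimal generator satisfies $a_j\preceq_{\mathbb{N}^d}g_1$, so no new gaps are produced, and indeed $\vert\mathcal{H}(S)\vert=3$ genuinely occurs (e.g.\ $\mathcal{H}(S)=\{e_i,2e_i,3e_i\}$), so the inequality $\vert\mathcal{H}(S)\vert\geq 5$ you hope for is simply false in general. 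The ``generic situation'' hedge is hiding the whole problem.

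What the paper does at this point, and what your proposal is missing, is an explicit classification. When $W=1$, the set $\{g\in S\mid g\prec\mathrm{F}(S)_{\prec}\}$ is $\{\mathbf{0}\}$, so Proposition \ref{cardH(S)} forces $\mathcal{H}(S)=\mathrm{PF}(S)$ with exactly three elements; $\prec$-almost symmetry then pins $\mathcal{H}(S)$ down to one of two shapes, $\{e_i,e_j,e_i+e_j\}$ or $\{e_i,2e_i,3e_i\}$, and in either case one writes out the minimal generating set of $\mathbb{N}^d\setminus\mathcal{H}(S)$ and finds $e(S)\geq 8$, ruling out $e(S)\in\{3,4\}$. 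When $W=2$ and $e(S)=3$, a similar count gives $\vert\mathcal{H}(S)\vert\leq 4$, the presence of some $e_i$ in $\mathcal{H}(S)$ forces $e(S)\geq 4$ when $d\geq 3$, and the remaining case $d=2$, $\vert\mathcal{H}(S)\vert=4$ is checked directly to need $e(S)>3$. Without this enumeration (or some substitute lower bound on $e(S)$ when $\mathcal{H}(S)$ is tiny), your argument does not close, so as written the proposal has a genuine gap at precisely the step the paper spends most of its proof on.
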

\begin{proof}
If $\vert \{ g \in S \mid g \prec \F(S)_{\substack{\\\prec}}\} \vert \geq 3$, then by Corollary \ref{wilfsuffcor}, $S$ satisfies extended Wilf's conjecture. The conjecture is also satisfied when $\vert \{ g \in S \mid g \prec \F(S)_{\substack{\\\prec}}\} \vert = 1$ and $e(S) \geq 5$,  $\vert \{ g \in S \mid g \prec \F(S)_{\substack{\\\prec}}\} \vert = 2$ and $e(S) \geq 4$. We will now prove that the cases $\vert \{ g \in S \mid g \prec \F(S)_{\substack{\\\prec}}\} \vert = 1$ and $e(S) = 3 ~ \text{or} ~ 4$, $\vert \{ g \in S \mid g \prec \F(S)_{\substack{\\\prec}}\} \vert = 2$ and $e(S) = 3$ are not possible. This will complete the proof.

Suppose $\vert \{ g \in S \mid g \prec \F(S)_{\substack{\\\prec}}\} \vert = 1$. By Proposition \ref{cardH(S)}, we get $\mathcal{H}(S) = \mathrm{PF}(S)$. We also have, ${\bf{0}} = (0,\ldots,0) \prec \F(S)_{\substack{\\\prec}}$ for each term order $\prec$ and $\vert \mathcal{H}(S) \vert = \vert \mathrm{PF}(S) \vert = 3$. Since $S$ is $\prec$-almost symmetric, for $i \neq j \in [1,d]$ the only possible choices for $\mathcal{H}(S)$ are $\{e_i, e_j , e_i + e_j\}$ and $\{e_i, 2e_i, 3e_i\}$, where $\{e_1,\ldots, e_d\}$ is the standard $\mathbb{Q}$-basis of the vector space $\mathbb{Q}^d$. If $\mathcal{H}(S) = \{e_i, e_j , e_i + e_j\}$, then $S = \mathbb{N}^d \setminus \mathcal{H}(S)$ is minimally generated by the set
\[ \{ 2e_i, 3e_i, 2e_j, 3e_j\} \cup \{e_i + 2e_j, e_i+3e_j, 2e_i+e_j, 3e_i+e_j\}\bigcup \left( \bigcup_{\substack{k=1\\ k \neq i,j}}^{d} \{e_k\} \right). \]
 
Since $d \geq 2$, the embedding dimension, in this case, is at least $8$. Thus, in this case $e(S)$ can not be $3$ or $4$ whenever $\vert \{ g \in S \mid g \prec \F(S)_{\substack{\\\prec}}\} \vert = 1$. If $\mathcal{H}(S) = \{e_i, 2e_i, 3e_i\}$ then $S = \mathbb{N}^d \setminus \mathcal{H}(S)$ is minimally generated by the set
\[ \{ 4e_i, 5e_i, 6e_i, 7e_i\} \bigcup \left( \bigcup_{\substack{k=1\\ k \neq i}}^{d} \{e_j\} \right) \bigcup \left( \bigcup_{\substack{k=1\\ k \neq i}}^{d} \{ e_i+e_k, 2e_i + e_k, 3e_i + e_k \} \right). \]

Since $d \geq 2$, the embedding dimension in this case also is at least $8$. Thus in this case also, $e(S)$ can not be $3$ or $4$ whenever $\vert \{ g \in S \mid g \prec \F(S)_{\substack{\\\prec}}\} \vert = 1$.

Note that since $\mathcal{H}(S) \neq \emptyset$, this implies $e_i \in \mathcal{H}(S)$ for some $i \in [1,d]$. If $d \geq 3$, then $e(S)$ has to be greater or equal to $4$. If $d \geq 3$ then $e(S)$ can not be $3$. Therefore, we assume that $d = 2$. However, for $d = 2$, we can easily observe that if $\vert \mathcal{H}(S) \vert = 4$, then $e(S)$ has to be greater than $3$. This completes the proof. 
\end{proof}

\begin{example}
{\rm Let $S$ be a submonoid of $\mathbb{N}^2$ generated by the set 
$$\{(0,1),(4,0),(5,0),(6,0),(7,0),(1,4),(2,7),(3,10)\}.$$
 Therefore,
\[\mathcal{H}(S) = \left\lbrace 
	\begin{array}{c}
	(1,0),(1,1),(1,2),(1,3),(2,0),(2,1),(2,2),(2,3),(2,4),(2,5),(2,6) \\[1mm]
	(3,0),(3,1),(3,2),(3,3),(3,4),(3,5),(3,6),(3,7),(3,8),(3,9)
	\end{array}
	\right\rbrace .\]
Note that by \cite[Corollary 2.15]{op}, $\mathrm{PF}(S) = \{(1,3),(2,6),(3,9)\}$. Let $\prec$ be the graded lexicographic order then $S$ is $\prec$-almost symmetric with $\F(S)_{\substack{\\\prec}} = (3,9)$.

\begin{figure}
    \centering
    \resizebox{8cm}{8cm}{%
    \begin{tikzpicture}
     \draw [blue!10,thick,fill=blue!10] (0,10) rectangle (13,13);
     \draw [blue!10,thick,fill=blue!10] (3,0) rectangle (13,13);
    \foreach \x in {0,...,12}
     { \draw[dotted] (0,\x)--(13,\x);
      \node at (-1,\x){$\x$};
      }
    \foreach \x in {0,...,12}
     { \draw[dotted](\x,0)--(\x,13);
       \node at (\x,-1){$\x$};
      }
    \draw[thick,->] (0,0)--(0,13);
    \draw[thick,->] (0,0)--(13,0);
    \draw[fill=green] (0,0) circle (0.05cm);
    \draw[fill=green] (0,1) circle (0.1cm);
    \draw[fill=green] (4,0) circle (0.1cm);
    \draw[fill=green] (5,0) circle (0.1cm);
    \draw[fill=green] (6,0) circle (0.1cm);
    \draw[fill=green] (1,4) circle (0.1cm);
    \draw[fill=green] (2,7) circle (0.1cm);
    \draw[fill=green] (3,10) circle (0.1cm);
    \draw[fill=green] (7,0) circle (0.1cm);
    \draw[fill=green] (0,2) circle (0.05cm);
    \draw[fill=green] (0,3) circle (0.05cm); \draw[fill=green] (0,4) circle (0.05cm); \draw[fill=green] (0,5) circle (0.05cm); \draw[fill=green] (0,6) circle (0.05cm); \draw[fill=green] (0,7) circle (0.05cm); \draw[fill=green] (0,8) circle (0.05cm); \draw[fill=green] (0,9) circle (0.05cm); \draw[fill=green] (1,5) circle (0.05cm); \draw[fill=green] (1,6) circle (0.05cm); \draw[fill=green] (1,7) circle (0.05cm); \draw[fill=green] (1,8) circle (0.05cm); \draw[fill=green] (1,9) circle (0.05cm);
     \draw[fill=green] (2,8) circle (0.05cm);
      \draw[fill=green] (2,9) circle (0.05cm);
       \draw[fill=green] (0,10) circle (0.05cm); \draw[fill=green] (1,10) circle (0.05cm); \draw[fill=green] (2,10) circle (0.05cm);
       \draw[fill=red] (1,2) circle (0.05cm);  \draw[fill=red] (1,0) circle (0.05cm);  \draw[fill=red] (2,0) circle (0.05cm);  \draw[fill=red] (3,0) circle (0.05cm); \draw[fill=red] (1,1) circle (0.05cm); \draw[fill=red] (1,3) circle (0.05cm); \draw[fill=red] (2,1) circle (0.05cm); \draw[fill=red] (2,2) circle (0.05cm); \draw[fill=red] (2,3) circle (0.05cm); \draw[fill=red] (2,4) circle (0.05cm); \draw[fill=red] (2,5) circle (0.05cm); \draw[fill=red] (2,6) circle (0.05cm); \draw[fill=red] (3,1) circle (0.05cm); \draw[fill=red] (3,2) circle (0.05cm); \draw[fill=red] (3,3) circle (0.05cm);
       \draw[fill=red] (3,4) circle (0.05cm)
       ;\draw[fill=red] (3,5) circle (0.05cm);
       \draw[fill=red] (3,6) circle (0.05cm);
       \draw[fill=red] (3,7) circle (0.05cm);
       \draw[fill=red] (3,8) circle (0.05cm);
       \draw[fill=red] (3,9) circle (0.05cm);
    \end{tikzpicture}
    }
\caption{}
    \label{fig:my_label}
\end{figure}
The green points and the points under the shaded blue area are the elements of the semigroup $S$. The red points are the elements of $\mathcal{H}(S)$. The big green points are representing the generators. Note that \[
 \vert \mathcal{H}(S) \setminus  \mathrm{PF}_{\substack{\\\prec}}'(S)  \vert = \vert \{ g \in S \mid g \preceq_{\mathbb{N}^d} \mathrm{F}(S)_{\substack{\\\prec}}\} \vert = 19. 
\]
Since $\prec$ is graded lexicographic order, we have 
\[ 
\{ g \in S \mid g \prec \F(S)_{\substack{\\\prec}}\} = \left\lbrace \{(a,b) \mid a+b \leq 11\} \setminus \mathcal{H}(S) \right\rbrace \cup \{(0,12),(1,11),(2,10)\}.
\]
Observe that $\vert \{ g \in S \mid g \prec \F(S)_{\substack{\\\prec}}\} \vert = 61$, therefore

 \[ \mathcal{N}(\F(S)_{\substack{\\\prec}}) = \vert \{ g \in S \mid g \prec \F(S)_{\substack{\\\prec}}\} \vert + \vert \mathcal{H}(S) \vert = 82. \]
 Thus,
\[ \mathcal{N}(\F(S)_{\substack{\\\prec}}) +1 = 83 < 488 = e(S) \cdot \vert \{ g \in S \mid g \prec \F(S)_{\substack{\\\prec}}\} \vert . \]
Hence, $S$ satisfies extended Wilf's conjecture.}
\end{example}

\begin{definition}
If $S$ is $\mathcal{C}$-semigroup, we say that $S$ is irreducible if it cannot be expressed as an intersection of two finitely generated submonoids of $\mathbb{N}^d$. 
\end{definition}

\begin{coro}
Let $S$ be an irreducible $\mathcal{C}$-semigroup with full cone. Then $S$ satisfies the extended Wilf's conjecture.
\end{coro}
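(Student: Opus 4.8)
The plan is to use irreducibility to cut $S$ down to the two cases handled by the sufficient conditions of this section. First I would invoke the characterization of irreducible $\mathcal{C}$-semigroups (see \cite{op}): such an $S$ is either $\prec$-symmetric, so that $|\mathrm{PF}(S)| = 1$, or $\prec$-pseudo-symmetric, so that $|\mathrm{PF}(S)| = 2$ and, in particular, $S$ is $\prec$-almost symmetric. If $d = 1$ the extended Wilf inequality is the classical Wilf conjecture, which is known for symmetric and pseudo-symmetric numerical semigroups, so I would assume $d \geq 2$; then the full cone hypothesis forces $e(S) \geq 3$, exactly as in the proof of Corollary \ref{wilfsuffcor}. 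Throughout I set $A = |\{g \in S \mid g \prec \mathrm{F}(S)_{\substack{\\\prec}}\}|$ and $B = |\{g \in S \mid g \preceq_{\mathbb{N}^d} \mathrm{F}(S)_{\substack{\\\prec}}\}|$, and record that $1 \leq B \leq A$ (using $\mathbf{0} \in S$ and the fact that $g \preceq_{\mathbb{N}^d} \mathrm{F}(S)_{\substack{\\\prec}}$ with $g \in S$ forces $g \prec \mathrm{F}(S)_{\substack{\\\prec}}$).

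In the $\prec$-symmetric case I would use that $\Omega_{\prec} = S$ (the corollary following the definition of $\prec$-symmetric). With full cone this says precisely that $z \in S \iff \mathrm{F}(S)_{\substack{\\\prec}} - z \notin S$ for all $z \in \mathbb{N}^d$, so the involution $z \mapsto \mathrm{F}(S)_{\substack{\\\prec}} - z$ of the box $[\mathbf{0}, \mathrm{F}(S)_{\substack{\\\prec}}]$ restricts to a bijection between $\{g \in S \mid g \preceq_{\mathbb{N}^d} \mathrm{F}(S)_{\substack{\\\prec}}\}$ and $\mathcal{H}(S)$, whence $|\mathcal{H}(S)| = B$. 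Then
\[ \mathcal{N}(\mathrm{F}(S)_{\substack{\\\prec}}) + 1 = |\mathcal{H}(S)| + A + 1 = B + A + 1 \leq 2A + 1 \leq 3A \leq e(S)\cdot A, \]
which is the desired inequality.

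In the $\prec$-pseudo-symmetric case $|\mathrm{PF}(S)| = 2$ and $S$ is $\prec$-almost symmetric. If $A \geq 2$, then $|\mathrm{PF}(S)| \leq A$ and Corollary \ref{wilfsuffcor} applies at once. If $A = 1$, then $B = 1$, and Proposition \ref{cardH(S)} forces $|\mathcal{H}(S)| = B + |\mathrm{PF}_{\substack{\\\prec}}'(S)| = 2$, so $\mathcal{H}(S) = \{\mathrm{F}(S)_{\substack{\\\prec}}, \mathrm{F}(S)_{\substack{\\\prec}}/2\}$. Since the only elements of $\mathbb{N}^d$ preceding $\mathrm{F}(S)_{\substack{\\\prec}}$ are then $\mathbf{0}$ and $\mathrm{F}(S)_{\substack{\\\prec}}/2$, and since the smallest nonzero element of any term order is a standard basis vector $e_i$, I would conclude $\mathrm{F}(S)_{\substack{\\\prec}} = 2e_i$ and $S = \mathbb{N}^d \setminus \{e_i, 2e_i\}$. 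A short inspection of the minimal generators (already $3e_i, 4e_i, 5e_i$ and some $e_j$ with $j \neq i$ occur among them when $d \geq 2$) gives $e(S) \geq 4$, so that $\mathcal{N}(\mathrm{F}(S)_{\substack{\\\prec}}) + 1 \leq 2A + |\mathrm{PF}(S)| = 4 \leq e(S)\cdot A$ by Proposition \ref{wilfssufficient}.

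The hard part will be two-fold: correctly citing and applying the structural dichotomy ``irreducible $\Leftrightarrow$ $\prec$-symmetric or $\prec$-pseudo-symmetric'', and handling the degenerate low-$A$ configuration in the pseudo-symmetric case, where Proposition \ref{wilfssufficient} and Corollary \ref{wilfsuffcor} are by themselves too weak and one must instead identify $S$ explicitly and bound $e(S)$ from below, much as in the proof of Theorem \ref{wilfbettithree}.
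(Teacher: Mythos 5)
Your proposal is correct and follows essentially the same route as the paper: the paper's proof consists of citing the characterization of irreducible $\mathcal{C}$-semigroups as $\prec$-symmetric or $\prec$-pseudo-symmetric together with the verification of the extended Wilf inequality in those two cases, which is exactly the dichotomy you set up. Your case analysis (the counting bijection coming from $\Omega_{\prec}=S$ in the symmetric case, Corollary \ref{wilfsuffcor} when $\vert\{ g \in S \mid g \prec \mathrm{F}(S)_{\prec}\}\vert \geq 2$, and the explicit identification $S=\mathbb{N}^d\setminus\{e_i,2e_i\}$ with $e(S)\geq 4$ in the degenerate pseudo-symmetric case) soundly supplies the details that the paper delegates to the two cited theorems.
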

\begin{proof}
Follows from \cite[Theorem 21]{pfelements} and \cite[Theorem 3.12]{op}.
\end{proof}

\section{Unboundedness of Betti-type of MPD-semigroups}
In this section, we give a class of $\mathrm{MPD}$-semigroups of embedding dimension four such that there is no upper bound on the Betti-type of $\mathrm{MPD}$-semigroups in terms of embedding dimension. Thus, we conclude that the Betti-type of $\mathrm{MPD}$- semigroup may not be a bounded function of the embedding dimension.

\begin{definition}
For an element $0 \neq a \in S$, the Ap\'{e}ry set of $a$ is defined as 
\[  
\mathrm{Ap}(S,a)= \{b \in S \mid b-a \notin S\},
\]
 and for a subset $E$ of $S,$ the Ap\'{e}ry set of $E$ is defined as 
\[
\mathrm{Ap}(S,E)= \{b \in S \mid b-a \notin S, \forall a \in E\}.
\]
\end{definition}

\medskip
\noindent 
Let $a \geq 3$ be an odd natural number and $p \in \mathbb{Z}^+$. Now, similar to \cite[Example 3.7]{jafari}, we define
\[
S_{a,p} = \langle (a,0),(0,a^p),(a+2,2),(2,2+a^p) \rangle.
\]
$S_{a,p}$ is a simplicial affine semigroup in $\mathbb{N}^2$.

\vspace{1cm}
\medskip
\begin{lemma}\label{AperysetS_a,p}
	$\Ap(S_{a,p},\{(a,0),(0,a^p)\}) = \{\alpha (a+2,2) + \alpha' (2,2+a^p) \mid \alpha + \alpha' < a^p \}$.
\end{lemma}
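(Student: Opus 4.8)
The plan is to first translate membership in $S_{a,p}$ into an explicit arithmetic condition, and then verify the two inclusions separately. The key observation is that $(a+2,2) = (a,0) + (2,2)$ and $(2,2+a^p) = (0,a^p) + (2,2)$, so writing $g_1 = (a,0)$, $g_2 = (0,a^p)$ and $\delta = (2,2)$, any element $n_1 g_1 + n_2(0,a^p) + n_3(a+2,2) + n_4(2,2+a^p)$ equals $(n_1+n_3)g_1 + (n_2+n_4)g_2 + (n_3+n_4)\delta$. Setting $m_1 = n_1+n_3$, $m_2 = n_2+n_4$, $k = n_3+n_4$, one checks that a triple $(m_1,m_2,k)$ comes from nonnegative $n_i$ precisely when $k \le m_1+m_2$ (this is the condition that $k$ can be split as $n_3+n_4$ with $n_3 \le m_1$, $n_4 \le m_2$). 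Thus I would record the criterion
\[
(u,v) \in S_{a,p} \iff \exists\, m_1,m_2,k \in \mathbb{N} \text{ with } k \le m_1+m_2,\ u = m_1 a + 2k,\ v = m_2 a^p + 2k.
\]
Because $a$ is odd, $\gcd(2,a^p)=1$, so the divisibility constraints $u \equiv 2k \pmod a$ and $v \equiv 2k \pmod{a^p}$ pin $k$ down to a single residue class modulo $a^p$; this rigidity is what drives everything.

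For the inclusion $\supseteq$, take $w = \alpha(a+2,2) + \alpha'(2,2+a^p)$ with $s := \alpha+\alpha' < a^p$, so that $w = (\alpha a + 2s,\ \alpha' a^p + 2s)$. I would show $w - g_1 \notin S_{a,p}$ using the criterion: any admissible $k$ must satisfy $k \equiv s \pmod{a^p}$, so $k = s + t a^p$ for some $t \in \mathbb{Z}$; nonnegativity of $k$ together with $s < a^p$ forces $t \ge 0$. Substituting back gives $m_1 = (\alpha-1) - 2t a^{p-1}$ and $m_2 = \alpha' - 2t$, and the constraint $k \le m_1+m_2$ collapses to $t(a^p + 2a^{p-1} + 2) \le -1$, which is impossible for $t \ge 0$. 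Hence no valid $k$ exists and $w - g_1 \notin S_{a,p}$; the identical computation with the coordinates interchanged (using that $\gcd(2,a)=1$ as well) gives $w - g_2 \notin S_{a,p}$, so $w \in \Ap(S_{a,p},\{g_1,g_2\})$.

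For $\subseteq$, let $b \in \Ap(S_{a,p},\{g_1,g_2\})$ and fix any representation $b = n_1 g_1 + n_2 g_2 + n_3(a+2,2) + n_4(2,2+a^p)$. If $n_1 \ge 1$ then $b - g_1 \in S_{a,p}$, contradicting $b - g_1 \notin S_{a,p}$; so $n_1 = 0$, and likewise $n_2 = 0$. Thus $b = \alpha(a+2,2) + \alpha'(2,2+a^p)$ with $\alpha = n_3$, $\alpha' = n_4$. To force $\alpha + \alpha' < a^p$, I would argue by contradiction: if $s = \alpha+\alpha' \ge a^p$, then the same parametrization with $t = -1$, i.e. $k = s - a^p \ge 0$, yields nonnegative $m_1 = (\alpha-1)+2a^{p-1}$ and $m_2 = \alpha'+2$ with $k \le m_1+m_2$ automatically satisfied, so $b - g_1 \in S_{a,p}$, contradicting $b \in \Ap(S_{a,p},\{g_1,g_2\})$. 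Hence $\alpha+\alpha' < a^p$, which places $b$ in the right-hand set. Combining the two inclusions gives the claimed equality.

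I expect the main obstacle to be the careful bookkeeping of the inequality $k \le m_1+m_2$ through the modular substitution $k = s + t a^p$: it is precisely this side condition (not the divisibility, which merely locates $k$) that separates membership from non-membership, and both directions reduce to checking the sign of the expression $t(a^p + 2a^{p-1} + 2) + 1$. The role of $a$ being odd is to guarantee $2$ is invertible modulo $a^p$, so that $k$ is determined modulo $a^p$ and the single threshold $t=-1$ governs the transition; this is the feature I would highlight as indispensable.
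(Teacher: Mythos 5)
Your proposal is correct, and it reorganizes the argument around a device the paper does not use: the reparametrization $(u,v)=(m_1a+2k,\,m_2a^p+2k)$ with the side condition $k\le m_1+m_2$, obtained by splitting $(a+2,2)=(a,0)+(2,2)$ and $(2,2+a^p)=(0,a^p)+(2,2)$. The paper proves the same two inclusions more directly: for $\alpha+\alpha'\ge a^p$ it exhibits an explicit rewriting of $\alpha(a+2,2)+\alpha'(2,2+a^p)-(a,0)$ as a nonnegative combination (your $t=-1$ instance, in different clothing), and for $\alpha+\alpha'<a^p$ it equates coefficients in a putative representation $\alpha(a+2,2)+\alpha'(2,2+a^p)=\sum\lambda_i a_i$ and runs a parity argument on $2(\alpha+\alpha')=a^p(\lambda_2+\lambda_4-\alpha')+2(\lambda_3+\lambda_4)$ using that $a^p$ is odd, forcing all $\lambda_i$ relevant to the contradiction to vanish. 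The arithmetic core is identical in both treatments --- oddness of $a$ makes $2$ invertible modulo $a^p$, pinning the ``diagonal'' contribution down to a single residue class, after which a sign check decides membership --- and indeed your $(m_1,m_2,k)$ are just $(\lambda_1+\lambda_3,\lambda_2+\lambda_4,\lambda_3+\lambda_4)$. What your version buys is a clean, reusable membership criterion for $S_{a,p}$ that handles both inclusions and both test elements $(a,0)$, $(0,a^p)$ uniformly, with the single expression $t(a^p+2a^{p-1}+2)+1$ governing the threshold; what the paper's version buys is brevity in the easy inclusion, where the witness decomposition is simply written down. One small point to tidy if you write this up: for $w-(0,a^p)$ the first coordinate only determines $k$ modulo $a$, so you should (as you implicitly do) extract the congruence $k\equiv s\pmod{a^p}$ from the \emph{second} coordinate in that case as well; with that noted, the two computations really are identical.
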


\begin{proof}
	It is clear from the definition of $\mathrm{Ap}(S,E)$ that 
\[
\mathrm{Ap}(S_{a,p},\{(a,0),(0,a^p)\}) \subseteq \{\alpha (a+2,2) + \alpha' (2,2+a^p) \mid \alpha ,\alpha' \in \mathbb{N} \}.
\]
Suppose $\alpha + \alpha' \geq a^p,$ therefore we can write 
\begin{center}
$\alpha + \alpha' = \tilde{\alpha}+(\alpha - \tilde{\alpha}) + \tilde{\alpha'}+(\alpha' - \tilde{\alpha'})$ such that $\tilde{\alpha} + \tilde{\alpha'} = a^p$.
\end{center}
Therefore,
\begin{align*}
\alpha (a+2,2) + \alpha' (2,2+a^p) - (a,0) = \tilde{\alpha}(a+2,2) &+ \tilde{\alpha'} (2,2+a^p)
+ (\alpha - \tilde{\alpha})(a+2,2)\\
& +(\alpha' - \tilde{\alpha'})(2,2+a^p)-(a,0).
\end{align*}
This implies
\begin{align*}
\alpha (a+2,2) + \alpha' (2,2+a^p) - (a,0) = (2 a^{p-1}+&(\alpha-1))(a,0)+(2 + \alpha')(0,a^p)\\
&+(\alpha - \tilde{\alpha})(a+2,2)
+(\alpha' - \tilde{\alpha'})(2,2+a^p).
\end{align*}
Therefore, if  $\alpha + \alpha' \geq a^p$ then $\alpha (a+2,2) + \alpha' (2,2+a^p) \notin \mathrm{Ap}(S_{a,p},\{(a,0),(0,a^p)\})$.
Therefore, we get
\[
\mathrm{Ap}(S_{a,p},\{(a,0),(0,a^p)\}) \subseteq \{\alpha (a+2,2) + \alpha' (2,2+a^p) \mid \alpha + \alpha' < a^p \}.
\]
Now assume that there exist $\alpha$ and $\alpha'$ such that $\alpha + \alpha' < a^p$ and $\alpha (a+2,2) + \alpha' (2,2+a^p) \notin \mathrm{Ap}(S_{a,p},\{(a,0),(0,a^p)\})$. Therefore, either $\alpha (a+2,2) + \alpha' (2,2+a^p) - (a,0) \in S_{a,p}$ or $\alpha (a+2,2) + \alpha' (2,2+a^p) - (0,a^p) \in S_{a,p}$. Therefore, we can write
\[
\alpha (a+2,2) + \alpha' (2,2+a^p)= \lambda_1(a,0) + \lambda_2 (0,a^p) + \lambda_3 (a+2,2) + \lambda_4(2, 2+a^p),
\]
for $\lambda_1,\lambda_2,\lambda_3,\lambda_4 \in \mathbb{N}$ such that either of $\lambda_1$ or $\lambda_2$ is non-zero. Thus
\[
(\alpha - \lambda_3)(a+2,2) + (\alpha' - \lambda_4) (2,2+a^p) = \lambda_1(a,0) + \lambda_2 (0,a^p).
\]
This implies
\[
2(\alpha - \lambda_3 + \alpha' - \lambda_4) + (\alpha' - \lambda_4)a^p = \lambda_2 a^p 
\]
and
\[
(a+2)(\alpha - \lambda_3 ) + 2(\alpha'-\lambda_4) = \lambda_1 a
\]
Therefore, $2(\alpha+\alpha') = a^p(\lambda_2 + \lambda_4 - \alpha') + 2 (\lambda_3 + \lambda_4)$. Since, $a^p$ is odd, we have $\lambda_2 + \lambda_4 - \alpha'$ is even. Thus, there exist $\ell \in \mathbb{Z}$ such that $(\alpha+\alpha') = a^p \ell + \lambda_3 + \lambda_4$. Since $\alpha + \alpha' < a^p$, we get $\ell \leq 0$. Therefore, we get $\alpha - \lambda_3 + \alpha' - \lambda_4 \leq 0$. Since $\lambda_2 \geq 0$, we get $\alpha - \lambda_3 \leq 0$ and $\alpha' - \lambda_4 \geq 0$. Note that 
\[
a(\alpha - \lambda_3 ) + 2(\alpha - \lambda+3 + \alpha'-\lambda_4) = \lambda_1 a.
\]
Since $\alpha - \lambda_3 + \alpha' - \lambda_4 \leq 0$, $\alpha - \lambda_3 \leq 0$, and $\lambda_1 \geq 0$, we get 
\[
\lambda_1 = 0 = \alpha - \lambda_3 + \alpha' - \lambda_4 =  \alpha - \lambda_3.
\] 
This implies $\lambda_2$ is also zero, which is a contradiction. Hence
\[
 \{\alpha (a+2,2) + \alpha' (2,2+a^p) \mid \alpha + \alpha' < a^p \} \subseteq \mathrm{Ap}(S_{a,p},\{(a,0),(0,a^p)\}).
\]
This completes the proof.
\end{proof}

\medskip
Define the set
 $$\Delta = \left\lbrace \left( a^p(a+2)-(\ell + 2)a - 2, a^p(\ell +2)-2 \right) \mid 0 \leq \ell < a^p - 1  \right\rbrace .$$
 
\medskip 
\begin{proposition}\label{unboundedbettitype}
$S_{a,p}$ is an $\mathrm{MPD}$-semigroup and $ \Delta \subseteq \mathrm{PF}(S_{a,p})$. In particular, the Betti-type of $S_{a,p}$ is unbounded with respect to its embedding dimension. 
\end{proposition}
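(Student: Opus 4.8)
The plan is to verify directly, for each element of $\Delta$, the two defining conditions of a pseudo-Frobenius element in Definition \ref{pf}. Write the four generators as $v_1=(a,0)$, $v_2=(0,a^p)$, $v_3=(a+2,2)$, $v_4=(2,2+a^p)$, and for $0\le \ell\le a^p-2$ let $d_\ell=(a^p(a+2)-(\ell+2)a-2,\ a^p(\ell+2)-2)$ be the corresponding element of $\Delta$. Since the extremal rays $v_1,v_2$ lie on the two coordinate axes, $\cone(S_{a,p})=\mathbb{Q}^2_{\ge 0}$; hence $\cone(S_{a,p})\cap\mathbb{N}^2=\mathbb{N}^2$ and $\mathcal{H}(S_{a,p})=\mathbb{N}^2\setminus S_{a,p}$. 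As each coordinate of $d_\ell$ is a nonnegative integer (the first is smallest at $\ell=a^p-2$, where it equals $2a^p-2\ge 0$), we have $d_\ell\in\cone(S_{a,p})\cap\mathbb{N}^2$, so it remains to show $d_\ell\notin S_{a,p}$ and $d_\ell+v_j\in S_{a,p}$ for $j=1,2,3,4$.

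The crux is the observation that $d_\ell$ sits just below a maximal element of the Ap\'ery set of Lemma \ref{AperysetS_a,p}. First I would record the identity $d_\ell=(a^p-1-\ell)v_3+\ell v_4-v_1$, checked by comparing coordinates. This gives at once
\[
d_\ell+v_1=(a^p-1-\ell)v_3+\ell v_4, \qquad d_\ell+v_2=(a^p-2-\ell)v_3+(\ell+1)v_4,
\]
the second identity being again a direct coordinate check. In both expressions the total $v_3,v_4$-degree equals $a^p-1<a^p$, so by Lemma \ref{AperysetS_a,p} both $d_\ell+v_1$ and $d_\ell+v_2$ lie in $\Ap(S_{a,p},\{v_1,v_2\})\subseteq S_{a,p}$. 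In particular, since $d_\ell+v_1$ is an Ap\'ery element, its very definition forces $(d_\ell+v_1)-v_1=d_\ell\notin S_{a,p}$; thus $d_\ell\in\mathcal{H}(S_{a,p})$, which is the membership condition and simultaneously settles the cases $j=1,2$.

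For $j=3,4$ I would use the degree-$a^p$ reduction relation already present in the proof of Lemma \ref{AperysetS_a,p}: for all integers $\tilde\alpha,\tilde\beta\ge 0$ with $\tilde\alpha+\tilde\beta=a^p$,
\[
\tilde\alpha\, v_3+\tilde\beta\, v_4=(\tilde\alpha+2a^{p-1})\,v_1+(\tilde\beta+2)\,v_2,
\]
verified directly in $\mathbb{N}^2$. Adding $v_3$ (resp. $v_4$) to $d_\ell$ raises the $v_3,v_4$-degree to exactly $a^p$, so applying this relation with $(\tilde\alpha,\tilde\beta)=(a^p-\ell,\ell)$ and $(a^p-1-\ell,\ell+1)$ yields
\[
d_\ell+v_3=(a^p-\ell-1+2a^{p-1})\,v_1+(\ell+2)\,v_2, \qquad d_\ell+v_4=(a^p-\ell-2+2a^{p-1})\,v_1+(\ell+3)\,v_2.
\]
Because $0\le\ell\le a^p-2$ and $a\ge 3$, all coefficients are nonnegative, so both elements lie in $S_{a,p}$. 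Together with the previous paragraph this shows $d_\ell+v_j\in S_{a,p}$ for every generator, hence $d_\ell\in\mathrm{PF}(S_{a,p})$ and $\Delta\subseteq\mathrm{PF}(S_{a,p})$.

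Finally, the second coordinate $a^p(\ell+2)-2$ of $d_\ell$ is strictly increasing in $\ell$, so the $a^p-1$ elements of $\Delta$ are distinct and $|\mathrm{PF}(S_{a,p})|\ge a^p-1$. As $a^p-1\ge 2$, $\mathrm{PF}(S_{a,p})\neq\emptyset$, so $S_{a,p}$ is an $\mathrm{MPD}$-semigroup by \cite[Theorem 6]{pfelements}; and since $e(S_{a,p})=4$ is fixed while $a^p-1\to\infty$ as $p\to\infty$, the Betti-type is unbounded in terms of the embedding dimension. I expect the only genuine difficulty to be the opening step of the second paragraph, namely guessing the correct expression $d_\ell=(a^p-1-\ell)v_3+\ell v_4-v_1$ that exhibits $d_\ell+v_1$ and $d_\ell+v_2$ as maximal Ap\'ery elements; once this is in hand, both the non-membership of $d_\ell$ and all four generator-shifts follow mechanically from Lemma \ref{AperysetS_a,p} together with its reduction relation, the remaining work being the routine nonnegativity bookkeeping that uses $\ell\le a^p-2$.
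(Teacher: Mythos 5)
Your proof is correct and follows essentially the same route as the paper: the same four explicit representations of $d_\ell+v_j$ (which you organize via the single identity $d_\ell=(a^p-1-\ell)v_3+\ell v_4-v_1$ plus the degree-$a^p$ reduction relation), and the same appeal to Lemma \ref{AperysetS_a,p} to conclude $d_\ell\notin S_{a,p}$, the only immaterial difference being that you use $d_\ell+v_1$ where the paper uses $f_\ell+(0,a^p)$. Incidentally, your coefficient $a^p-\ell-2+2a^{p-1}$ in the expression for $d_\ell+v_4$ is the correct one; the paper's displayed coefficient $a^{p-1}(a+2)-\ell$ is off by $2$ and should read $a^{p-1}(a+2)-\ell-2$.
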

\begin{proof}
Set $f_{\ell} = \left( a^p(a+2)-(\ell + 2)a - 2, a^p(\ell +2)-2 \right)$ for $0 \leq \ell < a^p - 1 $. Therefore, we have
\[
f_{\ell}+(a,0) = (a^p-\ell-1 ) (a+2,2) + \ell (2,a^p+2),
\]
\medskip
\[
f_{\ell}+(0,a^p) = (a^p-\ell-2 ) (a+2,2) + (\ell+1) (2,a^p+2),
\]
\medskip
\[
f_{\ell}+(a+2,2) = \left(a^{p-1}(a+2)-\ell-1\right)(a,0) + (\ell + 2)(0,a^p), 
\]
\medskip
\[
f_{\ell}+(2,2+a^p) = \left(a^{p-1}(a+2)-\ell\right)(a,0) + (\ell + 3)(0,a^p). 
\]
\medskip

This implies that for any $f \in \Delta$, $f + s \in S_{a,p}$ for all $s \in S_{a,p} \setminus \{0\}$. To complete the proof, it remains to prove that $\Delta \subseteq \mathcal{H}(S_{a,p})$. Let $f \in \Delta$,
 then $$f = \left( a^p(a+2)-(\ell + 2)a - 2, a^p(\ell +2)-2 \right) ~ \text{for some} ~ \ell \in [0,a^p-2].$$ Thus 
\[
f + (0,a^p) = (a^p-\ell-2 ) (a+2,2) + (\ell + 1) (2,a^p+2) ~ \text{for some} ~ \ell \in [0,a^p-2].
 \]
Since $(a^p-\ell-2 ) + (\ell + 1) < a^p$, therefore by Lemma \ref{AperysetS_a,p}, we have $$f + (0,a^p) \in \mathrm{Ap}(S_{a,p}, \{(a,0),(0,a^p)\}).$$ Therefore $f  \notin S_{a,p}$. It is clear that $\Delta \subseteq \mathbb{N}^2 = \mathrm{cone}(S_{a,p}) \cap \mathbb{N}^2$, and hence $\Delta \subseteq \mathrm{PF}(S_{a,p})$.  Now, as the Betti-type of an $\mathrm{MPD}$-semigroup $S$ is equal to the cardinality of the set $\mathrm{PF}(S)$, we get $\vert \mathrm{PF}(S_{a,p}) \vert \geq a^p - 1$.
\end{proof}

Now, we recall the definition of gluing of two affine semigroups.

\begin{definition}
Let $S \subseteq \mathbb{N}^d$ be an affine semigroup and $G(S)$ be the group spanned by $S$, that is, $G(S) =\{ a-b \in \mathbb{Z}^d \mid a, b \in S\}$. Let $A$ be the minimal generating system of $S$ and $A = A_1 \amalg A_2$ be a nontrivial partition of $A$. Let $S_i$ be the submonoid of $\mathbb{N}^r$ generated by $A_i, i \in {1, 2}$. Then $S = S_1 + S_2.$ We say that $S$ is the gluing of $S_1$ and $S_2$ by $s$ if 
	\begin{enumerate}
		\item $s \in S_1 \cap S_2$ and,
		\item $G(S_1 ) \cap G(S_2) = s\mathbb{Z}$. 
	\end{enumerate}
	If $S$ is a gluing of $S_1$ and $S_2$ by $d,$ we write $S = S_1 +_s S_2$.
\end{definition} 
 
\begin{theorem}[{\cite[Theorem 17]{pfelements}, \cite[Theorem 2.6]{op}}] \label{PFGluing}
	Let $S$ be an affine semigroup such that $S = S_1 +_s S_2,$ where $S_1$ and $S_2$ are $\mathrm{MPD}$-semigroups. Then $S$ is also an $\mathrm{MPD}$-semigroup. Moreover,
	\begin{align*}
		\PF(S)=\{f+g+s \mid f \in \PF(S_1), \ g\in \PF(S_2) \}.
	\end{align*} 
\end{theorem}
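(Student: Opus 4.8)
The plan is to prove both assertions at once by building the graded minimal free resolution of $k[S]$ from those of $k[S_1]$ and $k[S_2]$ and then reading off the last homological degree. Recall from \cite[Theorem 6]{pfelements} that for an $\mathrm{MPD}$-semigroup the $S$-degrees of the minimal generators of the last nonzero module in the minimal free resolution of $k[S]$ are exactly $\{a+\sigma \mid a\in\PF(S)\}$, where $\sigma=\sum_{i=1}^n a_i$. Thus it suffices to show that $\mathrm{pdim}_R k[S]=n-1$ (so that $S$ is $\mathrm{MPD}$) and that the last module of the resolution has generators in degrees $\{f+g+s+\sigma \mid f\in\PF(S_1),\,g\in\PF(S_2)\}$; matching the two descriptions of these degrees then forces $\PF(S)=\{f+g+s\mid f\in\PF(S_1),\,g\in\PF(S_2)\}$.

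Order the generators so that $A_1=\{a_1,\dots,a_p\}$ and $A_2=\{a_{p+1},\dots,a_n\}$, and set $R_1=k[x_1,\dots,x_p]$, $R_2=k[x_{p+1},\dots,x_n]$, so $R=R_1\otimes_k R_2$. Then $A:=R/(I_{S_1}+I_{S_2})\cong k[S_1]\otimes_k k[S_2]$, graded by $S_1\times S_2$ with one-dimensional pieces $k(\mathbf{t}^u\otimes\mathbf{t}^v)$. Let $\mu\colon A\to k[S]$ be the surjection $\mathbf{t}^u\otimes\mathbf{t}^v\mapsto\mathbf{t}^{u+v}$ (surjective since $S=S_1+S_2$) and let $\theta=\mathbf{t}^s\otimes 1-1\otimes\mathbf{t}^s\in\ker\mu$. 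The two facts I need about $\theta$ are: (i) $\theta$ is a nonzerodivisor on $A$, and (ii) $\ker\mu=(\theta)$. For (i), writing $\xi=\sum c_{(u,v)}\,\mathbf{t}^u\otimes\mathbf{t}^v$, the equation $\theta\xi=0$ is equivalent to $c_{(u,v)}=c_{(u+s,\,v-s)}$ for all $(u,v)$ (with $c\equiv 0$ off $S_1\times S_2$); since $s\neq 0$, the shift $(u,v)\mapsto(u+s,v-s)$ has only infinite orbits in $\mathbb{Z}^{2d}$, so a finitely supported invariant function must vanish. Fact (ii) is precisely where the gluing hypothesis enters: a spanning element $\mathbf{t}^u\otimes\mathbf{t}^v-\mathbf{t}^{u'}\otimes\mathbf{t}^{v'}$ of $\ker\mu$ has $u-u'=v'-v\in G(S_1)\cap G(S_2)=s\mathbb{Z}$, and one telescopes the difference along multiples of $s$ to exhibit it as a multiple of $\theta$. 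This gluing lemma (nonzerodivisor together with principal kernel) is the main obstacle and is the algebraic heart of the construction.

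With (i) and (ii) in hand, let $\mathbb{F}_\bullet$ and $\mathbb{G}_\bullet$ be the graded minimal free resolutions of $k[S_1]$ over $R_1$ and of $k[S_2]$ over $R_2$, of lengths $p-1$ and $n-p-1$ respectively. Their last modules have generators in degrees $\{f+\sigma_1\mid f\in\PF(S_1)\}$ and $\{g+\sigma_2\mid g\in\PF(S_2)\}$, where $\sigma_1=\sum_{i=1}^p a_i$ and $\sigma_2=\sum_{j=p+1}^n a_j$. Since the two variable sets are disjoint, $\mathbb{A}_\bullet:=\mathbb{F}_\bullet\otimes_k\mathbb{G}_\bullet$ is a graded minimal $R$-free resolution of $A$ of length $n-2$ (its homology is computed by Künneth over the field $k$), whose top module $\mathbb{A}_{n-2}$ has generators in degrees $\{f+g+\sigma\mid f\in\PF(S_1),\,g\in\PF(S_2)\}$ with $\sigma=\sigma_1+\sigma_2=\sum_{i=1}^n a_i$. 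Because $\theta$ is a nonzerodivisor of degree $s$ and $k[S]\cong A/\theta A$ by (i) and (ii), the mapping cone of the multiplication map $\theta\colon\mathbb{A}_\bullet(-s)\to\mathbb{A}_\bullet$, equivalently $\mathbb{A}_\bullet\otimes_R K_\bullet(\theta)$, is an $R$-free resolution of $k[S]$ of length $n-1$. It is minimal, since every new differential entry is a multiple of $\theta\in\mathfrak{m}$ while the differentials of $\mathbb{A}_\bullet$ are already minimal; hence $\mathrm{pdim}_R k[S]=n-1=e(S)-1$ and $S$ is $\mathrm{MPD}$.

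Finally, the top module of this coned resolution, in homological degree $n-1$, is $\mathbb{A}_{n-2}(-s)$, so its generators lie in degrees $\{f+g+s+\sigma\mid f\in\PF(S_1),\,g\in\PF(S_2)\}$. Matching this against the description $\{a+\sigma\mid a\in\PF(S)\}$ from \cite[Theorem 6]{pfelements} and cancelling the common shift $\sigma$ yields the claimed equality $\PF(S)=\{f+g+s\mid f\in\PF(S_1),\,g\in\PF(S_2)\}$. Beyond the gluing lemma, the only points needing care are the bookkeeping of the degree shift by $s$ introduced by the Koszul factor and the verification that the tensor-then-cone complex remains minimal; both are routine once the resolution is set up.
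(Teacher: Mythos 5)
The paper itself offers no proof of this statement: it is quoted verbatim from \cite[Theorem 17]{pfelements} and \cite[Theorem 2.6]{op}, so there is no internal argument to compare against. Your proposal is essentially a correct reconstruction of the standard proof via the mapping-cone construction of Gimenez--Srinivasan (the very result the present paper invokes elsewhere as \cite[Corollary 6.2]{hema2019}): present $k[S]$ as $A/\theta A$ with $A=k[S_1]\otimes_k k[S_2]$ and $\theta=\mathbf{t}^s\otimes 1-1\otimes\mathbf{t}^s$, tensor the two minimal resolutions, cone off $\theta$, and read the $\mathrm{PF}$-sets from the top degrees using \cite[Theorem 6]{pfelements}. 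The architecture, the degree bookkeeping (shift by $s$, $\sigma=\sigma_1+\sigma_2$), and the nonzerodivisor argument via finitely supported functions invariant under $(u,v)\mapsto(u+s,v-s)$ are all sound.

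Two points deserve tightening. First, for $\ker\mu=(\theta)$, a naive telescoping of $\mathbf{t}^u\otimes\mathbf{t}^v-\mathbf{t}^{u-ks}\otimes\mathbf{t}^{v+ks}$ through intermediate terms $\mathbf{t}^{u-js}\otimes\mathbf{t}^{v+js}$ risks leaving $S_1\times S_2$, since $u-js$ need not lie in $S_1$ for $0<j<k$; the clean fix is to factor $\mathbf{t}^u\otimes\mathbf{t}^v-\mathbf{t}^{u-ks}\otimes\mathbf{t}^{v+ks}=(\mathbf{t}^{u-ks}\otimes\mathbf{t}^v)\bigl((\mathbf{t}^s\otimes 1)^k-(1\otimes\mathbf{t}^s)^k\bigr)$ and use the geometric-sum identity $(\alpha^k-\beta^k)=(\alpha-\beta)\sum_j\alpha^j\beta^{k-1-j}$, which stays inside $A$. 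Second, your justification of minimality of the cone (``every new differential entry is a multiple of $\theta$'') is not literally correct: only the degree-zero component of the lifted chain map is multiplication by a lift of $\theta$; the higher components are whatever makes the squares commute and need not be multiples of $\theta$, and a pure degree argument does not exclude unit entries when two generators of $\mathbb{A}_i$ happen to differ in degree by exactly $s$. The correct argument is the long exact sequence of $\operatorname{Tor}^R(-,k)$ applied to $0\to A(-s)\by{\theta}A\to k[S]\to 0$: since $\theta\in\mathfrak{m}$ it acts as zero on $\operatorname{Tor}^R_i(A,k)$, so $\beta_i(k[S])=\beta_i(A)+\beta_{i-1}(A)$, which matches the ranks of the cone and forces it to be minimal. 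With these two repairs the proof is complete and yields both the MPD property and the description of $\PF(S)$.
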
 
 
The converse of the Theorem \ref{PFGluing} is also true.

\begin{proposition}
Let $S$ be a gluing of $S_1 = \langle a_1, \ldots, a_{n_1} \rangle$ and $S_2 = \langle a_1', \ldots, a_{n_2}' \rangle$. Then $S$ is an $\mathrm{MPD}$-semigroup if and only if $S_1$ and $S_2$ are $\mathrm{MPD}$-semigroups.
\end{proposition}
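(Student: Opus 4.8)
The plan is to recast the statement homologically and exploit the behaviour of depth under gluing. The forward implication is exactly Theorem \ref{PFGluing}, so only the converse requires work: assuming $S$ is an $\mathrm{MPD}$-semigroup, I must show that both $S_1$ and $S_2$ are $\mathrm{MPD}$. Recall that by definition $S$ is $\mathrm{MPD}$ iff $\mathrm{depth}_R k[S] = 1$, and likewise $S_i$ is $\mathrm{MPD}$ iff $\mathrm{depth}_{R_i} k[S_i] = 1$, where $R_i$ is the polynomial ring on the $n_i$ generators of $S_i$ and $R = R_1 \otimes_k R_2$ is the polynomial ring on all $n_1 + n_2$ generators of $S$. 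The whole argument then reduces to producing a single additivity formula for these depths.

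First I would record the structural description of $k[S]$ coming from the gluing. Writing $s = \sum_i c_i a_i = \sum_j d_j a_j'$ with $c_i, d_j \in \mathbb{N}$, the gluing binomial $\rho = \prod_i x_i^{c_i} - \prod_j y_j^{d_j}$ lies in $I_S$, and the defining ideal decomposes as $I_S = I_{S_1}R + I_{S_2}R + (\rho)$. Since $R/(I_{S_1}R + I_{S_2}R) \cong k[S_1] \otimes_k k[S_2]$, this yields $k[S] \cong (k[S_1] \otimes_k k[S_2])/(\rho)$. The defining gluing hypothesis $G(S_1) \cap G(S_2) = s\mathbb{Z}$ is precisely what guarantees that $\rho$ is a nonzerodivisor on $k[S_1] \otimes_k k[S_2]$: a zerodivisor would produce a relation forcing the groups $G(S_1)$ and $G(S_2)$ to overlap beyond $s\mathbb{Z}$.

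Next I would assemble the depth formula. As $\rho$ is a homogeneous nonzerodivisor lying in the maximal homogeneous ideal, the short exact sequence
\[ 0 \longrightarrow k[S_1] \otimes_k k[S_2] \stackrel{\rho}{\longrightarrow} k[S_1] \otimes_k k[S_2] \longrightarrow k[S] \longrightarrow 0 \]
gives $\mathrm{depth}_R k[S] = \mathrm{depth}_R(k[S_1] \otimes_k k[S_2]) - 1$. Combining this with the additivity of depth for a tensor product of finitely generated graded $k$-algebras, namely $\mathrm{depth}_R(k[S_1] \otimes_k k[S_2]) = \mathrm{depth}_{R_1} k[S_1] + \mathrm{depth}_{R_2} k[S_2]$ (which itself follows from the fact that the minimal resolution of the tensor product is the tensor product of the minimal resolutions, together with Auslander--Buchsbaum), I obtain
\[ \mathrm{depth}_R k[S] = \mathrm{depth}_{R_1} k[S_1] + \mathrm{depth}_{R_2} k[S_2] - 1. \]

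Finally I would read off the equivalence. Each $k[S_i]$ is a positive-dimensional domain, so it has a homogeneous nonzerodivisor in its maximal ideal and hence $\mathrm{depth}_{R_i} k[S_i] \geq 1$ for $i = 1, 2$. Thus the right-hand side of the formula is at least $1$, with equality exactly when both summands equal $1$. Consequently $\mathrm{depth}_R k[S] = 1$ forces $\mathrm{depth}_{R_1} k[S_1] = \mathrm{depth}_{R_2} k[S_2] = 1$; that is, $S$ being $\mathrm{MPD}$ forces both $S_1$ and $S_2$ to be $\mathrm{MPD}$, while conversely if both are $\mathrm{MPD}$ the formula returns $\mathrm{depth}_R k[S] = 1$. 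I expect the main obstacle to be the verification that $\rho$ is a nonzerodivisor on $k[S_1] \otimes_k k[S_2]$, since this is the one place where the gluing condition $G(S_1) \cap G(S_2) = s\mathbb{Z}$ must be used in full strength; once that is in hand, the depth additivity and the final counting are routine.
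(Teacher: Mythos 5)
Your argument is correct and follows essentially the same route as the paper: both reduce the converse to the additivity formula $\mathrm{pd}_R k[S] = \mathrm{pd}_{R_1} k[S_1] + \mathrm{pd}_{R_2} k[S_2] + 1$ (equivalently, via Auslander--Buchsbaum, your depth formula), which the paper simply cites from Gimenez--Srinivasan while you re-derive it from the presentation $k[S] \cong \bigl(k[S_1] \otimes_k k[S_2]\bigr)/(\rho)$. One small correction of attribution: $\rho$ is a nonzerodivisor automatically, because $k[S_1] \otimes_k k[S_2]$ is the semigroup ring of the affine semigroup $S_1 \oplus S_2$ and hence a domain in which $\rho \neq 0$; the gluing hypothesis $G(S_1) \cap G(S_2) = s\mathbb{Z}$ is instead what guarantees that $I_S = I_{S_1}R + I_{S_2}R + (\rho)$, i.e.\ that the quotient by $\rho$ is exactly $k[S]$ and nothing smaller.
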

\begin{proof}
If $S_1$ and $S_2$ are $\mathrm{MPD}$-semigroups then by \cite[Theorem 17]{pfelements}, $S$ is an $\mathrm{MPD}$-semigroup. For the converse part, suppose without loss of generality that $S_1$ is not an $\mathrm{MPD}$-semigroup. Therefore, we have  $\mathrm{pd}_{R_1}k[S_1] < n_1-1$, where $R_1 = k[x_1, \ldots,x_{n_1}]$. Also, by Auslander-Buchsbaum formula, we have $\mathrm{pd}_{R_2} k[S_2] \leq n_2-1$, where $R_2 = k[x_1, \ldots, x_{n_2}]$. Therefore by \cite[Corollary 6.2]{hema2019}, for $R = k[x_1, \ldots, x_{n_1+n_2}]$, we have 
\begin{align*}
\mathrm{pd}_R k[S] = \mathrm{pd}_{R_1} k[S_1] + \mathrm{pd}_{R_2} k[S_2] + 1 < n_1-1+n_2-1+1 =n_1+n_2-1.
\end{align*}
Thus, we have $\mathrm{depth}_R k[S] \geq 2$. Hence, $S$ is not an $\mathrm{MPD}$-semigroup. This is a contradiction.
\end{proof}

Now, using the technique of gluing, we give a class of $\mathrm{MPD}$-semigroups in $\mathbb{N}^2$ of each embedding dimension $e \geq 4$, where there is no upper bound on the Betti-type in terms of the embedding dimension. 

\begin{remark}\label{remarkunbdd}
For each $e \geq 4$, there exists a class of $\mathrm{MPD}$-semigroups of embedding dimension $e$ in $\mathbb{N}^2$, where there is no upper bound on the Betti-type in terms of the embedding dimension $e$.
\end{remark}
\begin{proof}
Let $a \geq 3$ be an odd natural number and $p \in \mathbb{Z}^+$. Define
\[
S_{a,p} = \langle (a,0),(0,a^p),(a+2,2),(2,2+a^p) \rangle.
\]
Now, let $S_1 = \langle n_1, \ldots, n_s \rangle$ be a numerical semigroup. With $a$ and $p$ as above, define 
\[
\tilde{S_1} = \langle n_1(a, a^p), n_2(a,a^p), \ldots, n_s(a,a^p)\rangle.
\]
Let $\mu = \sum_{i=1}^s n_i$, and define 
\[
\tilde{S_{a,p}} = \langle \mu(a,0),\mu(0,a^p),\mu(a+2,2),\mu(2,2+a^p) \rangle.
\]
Now define,
\[
S_{a,p,s} = \langle \mu(a,0),\mu(0,a^p),\mu(a+2,2),\mu(2,2+a^p), n_1(a, a^p), n_2(a,a^p), \ldots, n_s(a,a^p) \rangle.
\]
Observe that the embedding dimension of $S_{a,p,s}$ is $s+4$. By \cite[Proposition 2]{hemagluingwhenandhow}, note that $S_{a,p,s}$ is a gluing of $\tilde{S_{a,p}}$ and $\tilde{S_1}$. Let $\PF(S_1)$ be the set of pseudo-Frobenius elements of the numerical semigroup $S_1$. It is easy to observe that $(a,a^p)\PF(S_1)$ is the set of pseudo-Frobenius elements of $\tilde{S_1}$. Let $\nu$ be the cardinality of $\PF(S_1)$. Therefore by Theorem \ref{PFGluing} and Proposition \ref{unboundedbettitype}, the cardinality of the set of pseudo-Frobenius elements of $S_{a,p,s}$ is greater or equal to $\nu(a^p-1)$. Thus the Betti-type of $S_{a,p,s}$ depends on $a$ and $p$. Hence, the Betti-type has no upper bound in terms of embedding dimension.
\end{proof}

Note that the classes of semigroups we have given above are not $\mathcal{C}$-semigroups. We ask the following question about Betti-type of $\mathcal{C}$-semigroups.

\begin{question}
Let $d >1$, what can one say about the boundedness of the Betti-type of $\mathcal{C}$-semigroups of $\mathbb{N}^d $ in terms of the embedding dimension $?$
\end{question}

\section{Submonoids with Arf property}

In \cite{lipmanArf}, Lipman gave a characterization of Arf rings in terms of their value semigroup. This motivated the study of Arf numerical semigroups in the semigroup theoretic sense(e.g., see \cite{rosalesArf}). A numerical semigroup $S$ is called Arf if it satisfies the following property:
\[
\text{for}~ x,y,z \in S, ~ \text{if} ~ x \geq y \geq z ~ \text{then} ~ x+y-z \in S.
\]
This property is called the Arf property. In this section, we study those submonoids of $\mathbb{N}^d$ which satisfy the Arf property with respect to the usual partial order on $\mathbb{N}^d$.

\begin{definition}
 A submonoid $S$ of $\mathbb{N}^d$ is called Arf if for all $x \preceq_{\mathbb{N}^d} y \preceq_{\mathbb{N}^d} z$, $y + z - x \in S$.
\end{definition}

\begin{example}
{ \rm Let $S = \langle (0,1),(1,2),(2,0),(3,0) \rangle$. Then $S$ is Arf monoid. Let $x,y,z \in S$ such that $x \preceq_{\mathbb{N}^d} y \preceq_{\mathbb{N}^d} z$. If $y-x \in S$, we are done. If not then $y-x$ has to be either $(1,0)$ or $(1,1)$. If $y-x = (1,1)$ then for any $z \in S\setminus \{0\}$, we have $y+z-x \in S$. If $y - x = (1,0)$, then $y = (1,0)+x \preceq_{\mathbb{N}^d} z$. Therefore, $ (1+x_1, x_2) \preceq_{\mathbb{N}^d} z$, implies $(2+x_1,x_2) \preceq_{\mathbb{N}^d} y+z-x$. Hence, $y+z-x \in S$ for all $x,y,z \in S$.}
\end{example}

\begin{proposition}\label{a+SU0arf}
Let $S$ be a submonoid of $\mathbb{N}^d$ and $a \in S^*$. Then $S$ is an Arf monoid if and only if $(a+S) \cup \{0\}$ is an Arf monoid.
\end{proposition}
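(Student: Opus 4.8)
The plan is to write $T=(a+S)\cup\{0\}$ and to transport the Arf condition back and forth across the shift $s\mapsto a+s$, which is a bijection between $S$ and $a+S=T\setminus\{0\}$. First I would check that $T$ is a submonoid of $\mathbb{N}^d$: it contains $0$, adding $0$ is harmless, and for $a+s_1,a+s_2\in a+S$ we have $(a+s_1)+(a+s_2)=a+(a+s_1+s_2)\in a+S$ because $a\in S$. Thus it makes sense to ask whether $T$ is Arf.

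For the forward implication, assume $S$ is Arf and take $x\preceq_{\mathbb{N}^d}y\preceq_{\mathbb{N}^d}z$ in $T$; the goal is $y+z-x\in T$. If $x=0$ then $y+z-x=y+z\in T$ just by monoid closure, so I may assume $x\neq 0$, i.e.\ $x\in a+S$. Then $y,z\succeq_{\mathbb{N}^d}x\succ 0$ forces $y,z\in a+S$ as well, and writing $x=a+u$, $y=a+v$, $z=a+w$ with $u,v,w\in S$ turns the hypothesis $x\preceq y\preceq z$ into $u\preceq v\preceq w$. Since $y+z-x=a+(v+w-u)$ and $v+w-u\in S$ by the Arf property of $S$, we conclude $y+z-x\in a+S\subseteq T$.

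For the converse, assume $T$ is Arf and take $x\preceq_{\mathbb{N}^d}y\preceq_{\mathbb{N}^d}z$ in $S$. Shifting gives $a+x,a+y,a+z\in a+S\subseteq T$ with $a+x\preceq a+y\preceq a+z$, so the Arf property of $T$ yields $(a+y)+(a+z)-(a+x)=a+(y+z-x)\in T$. The one genuine subtlety---the rest being bookkeeping---is to certify that this element is not the adjoined $0$: from $x\preceq y$ we get $y-x\in\mathbb{N}^d$, hence $y+z-x=(y-x)+z\in\mathbb{N}^d$, and since $a\in S^*$ we have $a+(y+z-x)\succeq_{\mathbb{N}^d}a\succ 0$. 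Therefore $a+(y+z-x)\in a+S$, say $a+(y+z-x)=a+s$ with $s\in S$; cancelling $a$ gives $y+z-x=s\in S$, so $S$ is Arf. I expect no real difficulty beyond this $0$-versus-$(a+S)$ case distinction, together with the parallel split in the forward direction.
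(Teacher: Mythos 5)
Your proof is correct and follows essentially the same route as the paper's: transport the Arf condition across the bijection $s\mapsto a+s$ in both directions. You are in fact slightly more careful than the paper, which silently skips the cases where one of $x,y,z$ equals the adjoined $0$ and does not verify that $a+(y+z-x)$ is nonzero in the converse; your handling of both points is the right way to close those small gaps.
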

\begin{proof}
Suppose $S$ is an Arf monoid. Let $x,y,z \in a+S$ be such that $x \preceq_{\mathbb{N}^d} y \preceq_{\mathbb{N}^d} z$. Since $x,y,z \in a+S$, we have $x = a+s_1$, $y = a+s_2$, $z = a+s_3$ for some $s_1, s_2, s_3 \in S$. Therefore, $a+s_1 \preceq_{\mathbb{N}^d} a+s_2 \preceq_{\mathbb{N}^d} a+s_3$ implies $s_1 \preceq_{\mathbb{N}^d} s_2 \preceq_{\mathbb{N}^d} s_3$. Since $S$ is an Arf monoid, we have $s_2+s_3-s_1 \in S$. Therefore, $y + z -x = a+ s_2 + s_3 - s_1 \in a+S$ and hence, $(a+S) \cup \{0\}$ is an Arf monoid.  Conversely, suppose $(a+S) \cup \{0\}$ is an Arf monoid and $s_1, s_2, s_3 \in S$ be such that $s_1 \preceq_{\mathbb{N}^d} s_2 \preceq_{\mathbb{N}^d} s_3$. This implies $a+s_1 \preceq_{\mathbb{N}^d} a+s_2 \preceq_{\mathbb{N}^d} a+s_3$. Therefore, we have $a+ s_2 + s_3 - s_1 \in a+S$. Therefore, $s_2 + s_3 - s_1 \in S$ and hence, $S$ is an Arf monoid.
\end{proof}

It is clear from the definition that the intersection of  Arf monoids is again an arf monoid. We call the intersection of all Arf submonoids containing a submonoid $S$, the Arf closure of $S$, denoted by $\mathrm{Arf}(S)$. If a submonoid is contained in only finitely many Arf monoids, then the Arf closure of $S$ is the smallest (with respect to inclusion) Arf monoid containing $S$.

\begin{proposition}\label{arfclosure}
Let $S$ be a submonoid of $\mathbb{N}^d$. Then
\[
S' = \{y+z-x \mid x,y,z \in S, x \preceq_{\mathbb{N}^d} y \preceq_{\mathbb{N}^d} z\}
\]
is a submonoid of $\mathbb{N}^d$ and $S \subseteq S'$.
\end{proposition}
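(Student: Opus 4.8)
The plan is to verify the three defining properties of a submonoid of $\mathbb{N}^d$ directly from the description of $S'$, namely that $S' \subseteq \mathbb{N}^d$, that $0 \in S'$, and that $S'$ is closed under addition; the inclusion $S \subseteq S'$ will fall out along the way.

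First I would check that every element of $S'$ genuinely lies in $\mathbb{N}^d$. If $x \preceq_{\mathbb{N}^d} y \preceq_{\mathbb{N}^d} z$ with $x,y,z \in S$, then $y - x \in \mathbb{N}^d$ because $x \preceq_{\mathbb{N}^d} y$ means $y$ dominates $x$ coordinatewise, and hence $y + z - x = (y - x) + z \in \mathbb{N}^d$. Taking $x = y = z = 0$ shows $0 \in S'$, and taking $x = y = 0$ and $z = s$ for an arbitrary $s \in S$ gives $y + z - x = s$ with $0 \preceq_{\mathbb{N}^d} 0 \preceq_{\mathbb{N}^d} s$; this simultaneously exhibits each $s \in S$ as an element of $S'$, so $S \subseteq S'$.

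The only substantive step is closure under addition. Given two elements $y_1 + z_1 - x_1$ and $y_2 + z_2 - x_2$ of $S'$, with $x_i \preceq_{\mathbb{N}^d} y_i \preceq_{\mathbb{N}^d} z_i$ and $x_i, y_i, z_i \in S$, I would set $X = x_1 + x_2$, $Y = y_1 + y_2$, and $Z = z_1 + z_2$. Since $S$ is a submonoid it is closed under addition, so $X, Y, Z \in S$, and the sum of the two given elements rearranges to $Y + Z - X$. It then remains to observe that the chain condition persists, i.e. $X \preceq_{\mathbb{N}^d} Y \preceq_{\mathbb{N}^d} Z$; this is precisely the compatibility of $\preceq_{\mathbb{N}^d}$ with coordinatewise addition, which holds because $\preceq_{\mathbb{N}^d}$ compares vectors componentwise and $a \preceq_{\mathbb{N}^d} b$, $c \preceq_{\mathbb{N}^d} d$ force $a + c \preceq_{\mathbb{N}^d} b + d$. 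Hence $Y + Z - X \in S'$, giving closure.

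The argument is essentially formal, so I do not anticipate a genuine obstacle; the one point that warrants care is confirming that the defining chain inequalities survive addition, which is exactly where the compatibility of $\preceq_{\mathbb{N}^d}$ with the monoid operation is used. No structure of $S$ beyond its being a submonoid of $\mathbb{N}^d$ enters the proof.
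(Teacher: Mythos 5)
Your argument is correct and is essentially the same as the paper's: the paper also verifies closure under addition by summing the two chains componentwise, taking $x_1+x_2 \preceq_{\mathbb{N}^d} y_1+y_2 \preceq_{\mathbb{N}^d} z_1+z_2$ and rearranging the sum as $(y_1+y_2)+(z_1+z_2)-(x_1+x_2)$. Your treatment of the inclusions $S' \subseteq \mathbb{N}^d$ and $S \subseteq S'$ (which the paper dismisses as clear) is a fine, slightly more explicit elaboration of the same idea.
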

\begin{proof}
It is clear that $S' \subseteq \mathbb{N}^d$ and $S \subseteq S'$. Let $s_1, s_2 \in S'$. Then there exist $x_1,y_1,z_1,x_2,y_2,z_2 \in S$ such that $x_1 \preceq_{\mathbb{N}^d} y_1 \preceq_{\mathbb{N}^d} z_1$ and $x_2 \preceq_{\mathbb{N}^d} y_2 \preceq_{\mathbb{N}^d} z_2$, and $s_1 = y_1+z_1-x_1$, $s_2 = y_2+z_2-x_2$. Note that $x_1+x_2$, $y_1+y_2$, $z_1+z_2$ belong to $S$ and $x_1+x_2 \preceq_{\mathbb{N}^d} y_1+y_2 \preceq_{\mathbb{N}^d} z_1+z_2$. Therefore, $s_1+s_2 = (y_1+y_2) + (z_1+z_2) - (x_1+x_2) \in S'$. Hence $S'$ is a submonoid of $\mathbb{N}^d$.
\end{proof}

\begin{coro}\label{arfS=S'}
Let $S$ be a submonoid of $\mathbb{N}^d$. Then $S$ is Arf monoid if and only if $S = S'$.
\end{coro}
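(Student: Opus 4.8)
The plan is to prove both implications by directly unwinding the definition of $S'$ against the definition of an Arf monoid, using Proposition \ref{arfclosure} to supply the inclusion $S \subseteq S'$ for free. Since $S'$ is built from precisely the expressions $y+z-x$ that the Arf property is required to keep inside the monoid, the two conditions should match up almost tautologically, and I do not expect any genuine obstacle here; the only care needed is to track the quantifiers correctly.

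For the forward direction, I would assume $S$ is an Arf monoid and aim to show $S = S'$. By Proposition \ref{arfclosure} we already have $S \subseteq S'$, so it remains to check $S' \subseteq S$. I would take an arbitrary $w \in S'$; by the definition of $S'$ there exist $x,y,z \in S$ with $x \preceq_{\mathbb{N}^d} y \preceq_{\mathbb{N}^d} z$ and $w = y + z - x$. Since $S$ is Arf, the Arf property applied to this chain gives $y + z - x \in S$, that is $w \in S$. Hence $S' \subseteq S$, and combined with $S \subseteq S'$ this yields $S = S'$.

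For the converse, I would assume $S = S'$ and verify the Arf property directly. Taking any $x,y,z \in S$ with $x \preceq_{\mathbb{N}^d} y \preceq_{\mathbb{N}^d} z$, the element $y + z - x$ lies in $S'$ simply by the definition of $S'$. Using the hypothesis $S' = S$, we conclude $y + z - x \in S$, which is exactly the Arf property. This completes the equivalence.

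The main point to watch, rather than any real difficulty, is that the definition of $S'$ ranges over \emph{all} triples in $S$ with $x \preceq_{\mathbb{N}^d} y \preceq_{\mathbb{N}^d} z$, so membership of $y+z-x$ in $S'$ is automatic for every such configuration; this is what makes the converse immediate. The whole argument is a short double-inclusion, and no computation beyond substituting the definitions is required.
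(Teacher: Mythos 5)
Your proof is correct and is exactly the argument the paper intends: the corollary is left without proof there because it follows immediately from the definition of $S'$ together with the inclusion $S \subseteq S'$ from Proposition \ref{arfclosure}, which is precisely the double inclusion you carry out.
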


Now, recursively define $S^0 = S$ and $S^{k+1} = (S^k)'$ for any $k \in \mathbb{N}$.

\begin{proposition}\label{Si=ArfS}
Let $S$ be a submonoids of $\mathbb{N}^d$ contained in only finitely many submonoids of $\mathbb{N}^d$. Then there exist $i \in \mathbb{N}$ such that $S^i = \mathrm{Arf}(S)$.
\end{proposition}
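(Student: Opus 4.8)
The plan is to exploit the ascending chain produced by iterating the closure operation $(\cdot)'$. First I would note, using Proposition \ref{arfclosure}, that each $S^k$ is a submonoid of $\mathbb{N}^d$ satisfying $S^k \subseteq (S^k)' = S^{k+1}$, so that
\[
S = S^0 \subseteq S^1 \subseteq S^2 \subseteq \cdots
\]
is an ascending chain of submonoids, every term of which contains $S$. By hypothesis there are only finitely many submonoids of $\mathbb{N}^d$ containing $S$, hence only finitely many distinct sets can occur among the $S^k$. Since the chain is increasing and lives inside a finite collection, it must stabilize: there exists $i \in \mathbb{N}$ with $S^i = S^{i+1} = (S^i)'$.

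Next I would invoke Corollary \ref{arfS=S'}: the equality $S^i = (S^i)'$ says precisely that $S^i$ is an Arf monoid. Since $S \subseteq S^i$, this makes $S^i$ an Arf monoid containing $S$. Because $S$ is contained in only finitely many submonoids, it is in particular contained in only finitely many Arf monoids, so $\mathrm{Arf}(S)$ is the smallest Arf monoid containing $S$ (as recorded in the discussion preceding Proposition \ref{arfclosure}). Consequently $\mathrm{Arf}(S) \subseteq S^i$.

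For the reverse inclusion I would first record the monotonicity of $(\cdot)'$: if $A \subseteq B$ are submonoids, then $A' \subseteq B'$, since any element $y+z-x$ with $x,y,z \in A$ and $x \preceq_{\mathbb{N}^d} y \preceq_{\mathbb{N}^d} z$ is also a valid witness for membership in $B'$. Combining this with $(\mathrm{Arf}(S))' = \mathrm{Arf}(S)$ (again Corollary \ref{arfS=S'}, as $\mathrm{Arf}(S)$ is Arf), a one-line induction gives $S^k \subseteq \mathrm{Arf}(S)$ for all $k$: the base case is $S^0 = S \subseteq \mathrm{Arf}(S)$, and if $S^k \subseteq \mathrm{Arf}(S)$ then $S^{k+1} = (S^k)' \subseteq (\mathrm{Arf}(S))' = \mathrm{Arf}(S)$. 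In particular $S^i \subseteq \mathrm{Arf}(S)$, and together with the previous paragraph we conclude $S^i = \mathrm{Arf}(S)$.

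The routine verifications—monotonicity of $(\cdot)'$ and the short induction—are immediate, so I do not expect a genuine obstacle here. The only step carrying real content is the stabilization of the chain, where the finiteness hypothesis does all the work; the main point to get right is to establish \emph{both} inclusions $\mathrm{Arf}(S) \subseteq S^i$ (via minimality of the Arf closure) and $S^i \subseteq \mathrm{Arf}(S)$ (via monotonicity and induction), rather than settling for stabilization alone.
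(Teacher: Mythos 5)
Your proof is correct and follows essentially the same route as the paper's: obtain stabilization of the ascending chain $S^0 \subseteq S^1 \subseteq \cdots$ from the finiteness hypothesis, identify the stable term as an Arf monoid via Corollary \ref{arfS=S'}, and sandwich it against $\mathrm{Arf}(S)$ using both inclusions. The only difference is that you spell out the monotonicity-plus-induction argument for $S^k \subseteq \mathrm{Arf}(S)$, which the paper dismisses as obvious.
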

\begin{proof}
By Proposition \ref{arfclosure}, it is clear that $S \subseteq S^k$ and $S^k \subseteq S^{k+1}$ for all $k \in \mathbb{N}$. Since, $S$ is contained in only finitely many submonoids of $\mathbb{N}^d$, we have $S^i = S^{i+1} = (S^i)'$ for some $i \in \mathbb{N}$. By Corollary \ref{arfS=S'}, $S^i$ is an Arf submonoid of $\mathbb{N}^d$ containing $S$. Since $\mathrm{Arf}(S)$ is the smallest Arf monoid containing $S$, we have $\mathrm{Arf}(S) \subseteq S^i$. On the other hand, it is obvious that $S^k \subseteq \mathrm{Arf}(S)$ for all $k \in \mathbb{N}$. Hence, we have $\mathrm{Arf}(S) = S^i$. 
\end{proof}

\begin{coro}
Let $S$ be a $\mathcal{C}$-semigroup with full cone. Then there exist $i \in \mathbb{N}$ such that $S^i = \mathrm{Arf}(S)$.
\end{coro}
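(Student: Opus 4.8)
The plan is to deduce this directly from Proposition \ref{Si=ArfS}, whose only hypothesis is that $S$ be contained in only finitely many submonoids of $\mathbb{N}^d$. Since the conclusion we want ($S^i = \mathrm{Arf}(S)$ for some $i$) is exactly the conclusion of that proposition, the entire task reduces to verifying this finiteness condition for a $\mathcal{C}$-semigroup with full cone.

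First I would exploit the full cone hypothesis to simplify the gap set. Because the cone is full, $\mathrm{cone}(S) = \mathbb{Q}_{\geq 0}^d$, so $\mathrm{cone}(S) \cap \mathbb{N}^d = \mathbb{N}^d$ and hence $\mathcal{H}(S) = \mathbb{N}^d \setminus S$. Since $S$ is a $\mathcal{C}$-semigroup, $\mathcal{H}(S)$ is finite by definition; thus $S$ has finite complement in $\mathbb{N}^d$.

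Next I would argue the finiteness of the containing submonoids. Any submonoid $T$ of $\mathbb{N}^d$ with $S \subseteq T$ automatically satisfies $S \subseteq T \subseteq \mathbb{N}^d$, so $\mathbb{N}^d \setminus T \subseteq \mathbb{N}^d \setminus S = \mathcal{H}(S)$. As $T$ is completely determined by its complement $\mathbb{N}^d \setminus T$, and this complement is one of the finitely many subsets of the finite set $\mathcal{H}(S)$, there are at most $2^{\vert \mathcal{H}(S) \vert}$ submonoids of $\mathbb{N}^d$ containing $S$. In particular, $S$ is contained in only finitely many submonoids of $\mathbb{N}^d$.

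With this finiteness in hand, Proposition \ref{Si=ArfS} applies verbatim and yields an $i \in \mathbb{N}$ with $S^i = \mathrm{Arf}(S)$. There is no genuine obstacle here beyond the bookkeeping of the finiteness count; the only point worth stating explicitly is that the ascending chain $S = S^0 \subseteq S^1 \subseteq \cdots$ produced by the construction lives inside this finite collection of submonoids, which is precisely the reason it must stabilize and produce the Arf closure.
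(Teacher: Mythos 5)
Your proof is correct and is exactly the route the paper intends: the corollary is stated as an immediate consequence of Proposition \ref{Si=ArfS}, and the only content to supply is that the full-cone hypothesis makes $\mathcal{H}(S)=\mathbb{N}^d\setminus S$ finite, whence any submonoid $T$ with $S\subseteq T\subseteq\mathbb{N}^d$ is determined by its complement inside the finite set $\mathcal{H}(S)$, giving at most $2^{\vert\mathcal{H}(S)\vert}$ such submonoids. Your write-up fills in this finiteness count cleanly and matches the paper's (unstated) argument.
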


\begin{example}
{\rm Let $S$ be the submonoid of $\mathbb{N}^d$ generated by the set $$\{(0,1),(3,0),(5,0),(1,3),(2,3)\}.$$
Therefore,
\[S = \mathbb{N}^2 \setminus \left\lbrace 
	\begin{array}{c}
	(1,0),(1,1),(1,2),(2,0),(2,1),(2,2) \\[1mm]
	(4,0),(4,1),(4,2),(7,0),(7,1),(7,2)
	\end{array}
	\right\rbrace .\] 
Observe that, 
\[S' = \mathbb{N}^2 \setminus \left\lbrace 
	\begin{array}{c}
	(1,0),(1,1),(1,2),(2,0),(2,1),(2,2) \\[1mm]
	(4,0),(4,1),(4,2)
	\end{array}
	\right\rbrace .\]
Since $S \subsetneq S'$, this implies $S$ is not an Arf monoid. Also, note that $S'$ is an Arf monoid, hence $\mathrm{Arf}(S) = S'$.}
\end{example}

\begin{proposition}\label{a+arf}
Let $S = \langle a,a_1,\ldots,a_n \rangle \subseteq \mathbb{N}^d$ be a submonoid such that $\{a_1,\ldots,a_n\}$ is a part of a chain in the poset $(\mathbb{N}^d, \preceq_{\mathbb{N}^d} )$. If $a_l \preceq_{\mathbb{N}^d} a_i+a_j$ for all $i,j,l \in [1,n]$, then 
\[
a+ \langle a,a_1,\ldots,a_n \rangle^k \subseteq \mathrm{Arf}(\langle a,a+a_1,\ldots,a+a_n  \rangle).
\]
\end{proposition}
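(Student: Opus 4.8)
The plan is to prove the inclusion by induction on $k$, using throughout that $T := \langle a, a+a_1, \ldots, a+a_n \rangle$ sits inside the Arf monoid $\mathrm{Arf}(T)$, so by Corollary \ref{arfS=S'} applied to $\mathrm{Arf}(T)$ this submonoid is closed under the operation $(x,y,z) \mapsto y+z-x$ whenever $x \preceq_{\mathbb{N}^d} y \preceq_{\mathbb{N}^d} z$. Since $\{a_1,\ldots,a_n\}$ lies on a chain, I relabel so that $a_1 \preceq_{\mathbb{N}^d} \cdots \preceq_{\mathbb{N}^d} a_n$. Note that $a$ and each $a+a_i$ are generators of $T$, hence lie in $\mathrm{Arf}(T)$, and that $\mathrm{Arf}(T)$ is itself a monoid.

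For the base case $k=0$ I first reduce to the chain part of the generators: writing an arbitrary $s \in S = \langle a,a_1,\ldots,a_n\rangle$ as $s = \alpha_0 a + w$ with $w \in \langle a_1,\ldots,a_n\rangle$, we get $a+s = \alpha_0 a + (a+w)$, so because $\alpha_0 a \in \mathrm{Arf}(T)$ it suffices to show $a+w \in \mathrm{Arf}(T)$ for every $w \in \langle a_1,\ldots,a_n\rangle$. This I prove by an inner induction on the total multiplicity $N$ of $w$, i.e.\ the sum of the coefficients of the $a_i$. The cases $N=0$ and $N=1$ are immediate, since $a, a+a_i \in T$. For $N \geq 2$, let $a_i$ be the smallest generator occurring in $w$ and write $w = a_i + w''$, where $w''$ has multiplicity $N-1 \geq 1$; since $w''$ still involves some generator $a_\ell$, which satisfies $a_i \preceq_{\mathbb{N}^d} a_\ell$ (as $a_i$ is smallest among the generators occurring in $w$) and $a_\ell \preceq_{\mathbb{N}^d} w''$, we obtain $a \preceq_{\mathbb{N}^d} a+a_i \preceq_{\mathbb{N}^d} a+w''$. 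By the inner induction $a+w'' \in \mathrm{Arf}(T)$, and $a+a_i \in \mathrm{Arf}(T)$, so the Arf property gives $(a+a_i)+(a+w'')-a = a+w \in \mathrm{Arf}(T)$.

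For the inductive step, suppose $a + S^k \subseteq \mathrm{Arf}(T)$ and take $w \in S^{k+1} = (S^k)'$, so that $w = y+z-x$ with $x,y,z \in S^k$ and $x \preceq_{\mathbb{N}^d} y \preceq_{\mathbb{N}^d} z$. The key point is that translating by the fixed element $a$ preserves the order and rewrites the defining relation: we have $a+x \preceq_{\mathbb{N}^d} a+y \preceq_{\mathbb{N}^d} a+z$, all three of $a+x, a+y, a+z$ lie in $\mathrm{Arf}(T)$ by the induction hypothesis, and $a+w = (a+y)+(a+z)-(a+x)$. Applying the Arf property of $\mathrm{Arf}(T)$ once more yields $a+w \in \mathrm{Arf}(T)$, completing the induction.

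I expect the delicate point to be the base case, and specifically the verification that the auxiliary element $a+a_i$ is sandwiched between $a$ and $a+w''$ in $\preceq_{\mathbb{N}^d}$: this is exactly where the chain hypothesis on $\{a_1,\ldots,a_n\}$ is needed, so that the smallest occurring generator is comparable to all the others. The hypothesis $a_\ell \preceq_{\mathbb{N}^d} a_i+a_j$ is what one invokes to run the same argument when instead peeling off the \emph{largest} occurring generator $a_j = w - w''$, since then the remaining block $w''$ of multiplicity at least two dominates some $a_p+a_q \succeq_{\mathbb{N}^d} a_j$ and so still dominates $a_j$. Once the base case is secured, the inductive step is purely formal: it only uses that $\mathrm{Arf}(T)$ is closed under $(x,y,z)\mapsto y+z-x$ and that this closure operation commutes with translation by $a$.
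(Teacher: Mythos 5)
Your proof is correct, and its overall skeleton --- induction on $k$, with the inductive step consisting of translating the relation $w = y+z-x$ by $a$ and applying the Arf property of $\mathrm{Arf}(\langle a, a+a_1,\ldots,a+a_n\rangle)$ --- coincides with the paper's. The base case is handled differently, though. The paper first forms $a+a_i+a_j = (a+a_i)+(a+a_j)-a$ using the chain condition, and then grows the sum by repeatedly inserting a further generator \emph{below} the accumulated block, via $a \preceq_{\mathbb{N}^d} a+a_l \preceq_{\mathbb{N}^d} a+a_i+a_j$; this is precisely where the hypothesis $a_l \preceq_{\mathbb{N}^d} a_i+a_j$ is used. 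You instead peel off the \emph{smallest} generator occurring in $w$ and sandwich it as $a \preceq_{\mathbb{N}^d} a+a_i \preceq_{\mathbb{N}^d} a+w''$, which requires only the chain hypothesis. Consequently your argument never invokes $a_l \preceq_{\mathbb{N}^d} a_i+a_j$ at all, so it actually establishes the inclusion under the chain hypothesis alone --- a mild strengthening of the stated proposition. (Your closing remark about needing that hypothesis when peeling off the largest generator is the right diagnosis of the paper's variant, although in that variant the case of total multiplicity two still has to fall back on the smallest generator.) Both proofs are complete; yours is marginally more economical in its use of the hypotheses.
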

\begin{proof}
Without loss of generality, assume that $a_1 \preceq_{\mathbb{N}^d} a_2 \preceq_{\mathbb{N}^d} \ldots \preceq_{\mathbb{N}^d} a_n$. For $k =0$, we need to prove that $a+ \langle a,a_1,\ldots,a_n \rangle \subseteq \mathrm{Arf}(\langle a,a+a_1,\ldots,a+a_n  \rangle)$. Let $i,j \in [1,n]$, then it is clear that $a+a_i, a+a_j \in \mathrm{Arf}(\langle a,a+a_1,\ldots,a+a_n  \rangle)$. Since, either $i \leq j$ or $j \leq i$, we have either $a_i \preceq_{\mathbb{N}^d} a_j$ or $a_j \preceq_{\mathbb{N}^d} a_i$. Without loss of generality assume that $i \leq j$, therefore we have $a \preceq_{\mathbb{N}^d} a+a_i \preceq_{\mathbb{N}^d} a+a_j$. This implies that $a + a_i + a_j = (a+a_i) + (a+a_j) - a \in \mathrm{Arf}(\langle a,a+a_1,\ldots,a+a_n  \rangle)$. Let $l \in [1,n]$, since $a_l \preceq_{\mathbb{N}^d} a_i+a_j$, we have $ a \preceq_{\mathbb{N}^d} a+a_l \preceq_{\mathbb{N}^d} a+a_i+a_j$. As we have, $a, a+ a_l, a + a_i + a_j \in \mathrm{Arf}(\langle a,a+a_1,\ldots,a+a_n  \rangle)$, this implies $a+a_i+a_j+a_l = (a+a_l) + (a+ a_i + a_j) - a \in \mathrm{Arf}(\langle a,a+a_1,\ldots,a+a_n  \rangle)$. By repeating this arguement we get, $(\alpha+1)a + \alpha_1 a_1 + \ldots + \alpha_n a_n \in \mathrm{Arf}(\langle a,a+a_1,\ldots,a+a_n  \rangle)$ for all $\alpha, \alpha_1, \ldots, \alpha_n \in \mathbb{N}$. Hence, $a+ \langle a,a_1,\ldots,a_n \rangle \subseteq \mathrm{Arf}(\langle a,a+a_1,\ldots,a+a_n  \rangle)$.
Now, suppose that $a+ \langle a,a_1,\ldots,a_n \rangle^{k-1} \subseteq \mathrm{Arf}(\langle a,a+a_1,\ldots,a+a_n  \rangle)$. Let $s \in a+ \langle a,a_1,\ldots,a_n \rangle^k$. Then there exist $s' \in \langle a,a_1,\ldots,a_n \rangle^k$ such that $s = a+s'$. Therefore there exist $x, y , z \in \langle a,a_1,\ldots,a_n \rangle^{k-1}$ such that $x \preceq_{\mathbb{N}^d} y \preceq_{\mathbb{N}^d} z$ and $y + z - x = s'$.   Since $a+x, a+y , a+z \in a + \langle a,a_1,\ldots,a_n \rangle^{k-1}$, by induction hypothesis $ s = a + s' = a + y + z - x = (a+y) + (a+z) - (a + x) \in \mathrm{Arf}(\langle a,a+a_1,\ldots,a+a_n  \rangle)$.
\end{proof}

\begin{proposition}\label{7.10}
Let $S = \langle a,a_1,\ldots,a_n \rangle \subseteq \mathbb{N}^d$ be a submonoid contained in only finitely many submonoids of $\mathbb{N}^d$ such that $\{a_1,\ldots,a_n\}$ is a part of a chain in the poset $(\mathbb{N}^d, \preceq_{\mathbb{N}^d} )$. If $a_l \preceq_{\mathbb{N}^d} a_i+a_j$ for all $i,j,l \in [1,n]$, then 
\[
 \mathrm{Arf}(\langle a,a+a_1,\ldots,a+a_n  \rangle) = (a + \mathrm{Arf}(\langle a,a_1,\ldots,a_n  \rangle)) \cup \{0\}.
\]
\end{proposition}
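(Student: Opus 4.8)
The plan is to prove the asserted equality by a double inclusion, writing $B = \mathrm{Arf}(\langle a,a_1,\ldots,a_n \rangle)$ and $T = \langle a,a+a_1,\ldots,a+a_n \rangle$ for brevity. The two halves are of quite different character: the inclusion $(a+B)\cup\{0\} \subseteq \mathrm{Arf}(T)$ will come from the containment already proven in Proposition \ref{a+arf} together with the stabilization result of Proposition \ref{Si=ArfS}, while the reverse inclusion $\mathrm{Arf}(T) \subseteq (a+B)\cup\{0\}$ will follow from a minimality argument once I check that $(a+B)\cup\{0\}$ is an Arf monoid containing $T$.

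First I would handle $(a+B)\cup\{0\} \subseteq \mathrm{Arf}(T)$. Since $\langle a,a_1,\ldots,a_n \rangle$ is contained in only finitely many submonoids of $\mathbb{N}^d$ by hypothesis, Proposition \ref{Si=ArfS} supplies an index $i \in \mathbb{N}$ with $\langle a,a_1,\ldots,a_n \rangle^i = \mathrm{Arf}(\langle a,a_1,\ldots,a_n \rangle) = B$. The standing hypotheses on the $a_j$, namely that they form part of a chain in $(\mathbb{N}^d,\preceq_{\mathbb{N}^d})$ and that $a_l \preceq_{\mathbb{N}^d} a_i + a_j$ for all indices, are exactly those of Proposition \ref{a+arf}; applying that proposition with $k=i$ therefore yields $a + B = a + \langle a,a_1,\ldots,a_n \rangle^i \subseteq \mathrm{Arf}(T)$. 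Since $\mathrm{Arf}(T)$ is a monoid it contains $0$, whence $(a+B)\cup\{0\} \subseteq \mathrm{Arf}(T)$.

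For the reverse inclusion I would show that $M := (a+B)\cup\{0\}$ is itself an Arf monoid containing $T$. Because $B$ is an Arf monoid, being an Arf closure, and $a \in B\setminus\{0\} = B^*$ (as $a$ is a nonzero generator and $\langle a,a_1,\ldots,a_n\rangle \subseteq B$), Proposition \ref{a+SU0arf} applies verbatim with $B$ in place of $S$ to give that $M = (a+B)\cup\{0\}$ is an Arf monoid. Moreover $M$ contains each generator of $T$: indeed $a = a+0 \in a+B$ since $0 \in B$, and $a+a_j \in a+B$ since each $a_j$ lies in $\langle a,a_1,\ldots,a_n \rangle \subseteq B$. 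Thus $T \subseteq M$, and as $M$ is an Arf monoid containing $T$ while $\mathrm{Arf}(T)$ is by definition the intersection of all Arf monoids containing $T$, we conclude $\mathrm{Arf}(T) \subseteq M = (a+B)\cup\{0\}$. Combining the two inclusions gives the claimed equality.

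The routine points here are cosmetic: confirming $a \neq 0$ so that $a \in B^*$, and that $0 \in B$ so that shifting $0$ recovers the generator $a$. The one place that genuinely uses the structural hypotheses is the first inclusion, where the chain condition and the subadditivity-type condition $a_l \preceq_{\mathbb{N}^d} a_i+a_j$ are needed precisely to invoke Proposition \ref{a+arf}; without them the containment $a + \langle a,a_1,\ldots,a_n \rangle^k \subseteq \mathrm{Arf}(T)$ can fail, so this is the step on which the result rests. The reverse inclusion, by contrast, needs no hypothesis beyond $a \in B^*$, since it is driven entirely by Proposition \ref{a+SU0arf} and the minimality of the Arf closure.
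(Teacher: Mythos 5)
Your proof is correct and follows essentially the same route as the paper's: the forward inclusion via Proposition \ref{Si=ArfS} and Proposition \ref{a+arf}, and the reverse inclusion by noting that $(a+\mathrm{Arf}(\langle a,a_1,\ldots,a_n\rangle))\cup\{0\}$ is an Arf monoid containing the generators of $\langle a,a+a_1,\ldots,a+a_n\rangle$ (via Proposition \ref{a+SU0arf}) and invoking minimality of the Arf closure. You are slightly more explicit than the paper in verifying $a\in B^*$ and $0\in B$, but the argument is the same.
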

\begin{proof}
By Proposition \ref{Si=ArfS}, there exist $i \in \mathbb{N}$ such that $\langle a,a_1,\ldots,a_n \rangle^i = \mathrm{Arf}(\langle a,a_1,\ldots,a_n \rangle)$. By Proposition \ref{a+arf}, $a + \mathrm{Arf}(\langle a,a_1,\ldots,a_n \rangle) \subseteq  \mathrm{Arf}(\langle a,a+a_1,\ldots,a+a_n  \rangle) $. Observe that $a, a+a_1, \ldots, a+a_n \in a + \mathrm{Arf}(\langle a,a_1,\ldots,a_n  \rangle)$. By proposition \ref{a+SU0arf}, we have $(a + \mathrm{Arf}(\langle a,a_1,\ldots,a_n  \rangle)) \cup \{0\}$ is an Arf monoid. Hence,  $\mathrm{Arf}(\langle a,a+a_1,\ldots,a+a_n  \rangle) \subseteq (a + \mathrm{Arf}(\langle a,a_1,\ldots,a_n  \rangle)) \cup \{0\}$.
\end{proof}

\begin{definition}
If S is a submonoid of  ${\mathbb{N}^d}$, we define the multiplicity of $S$ as $m(S)=
inf_{\preceq_{\mathbb{N}^d}}(S\setminus \{0\})$. A submonoid $S$ of $\mathbb{N}^d$ is said to be a $\mathrm{PI}$-monoid if there exist a submonoid $T$
of $\mathbb{N}^d$ and $a \in T \setminus\{0\}$ such that $S = a + T \cup \{0\}$.
\end{definition}

\begin{remark}
Let $S$ be a numerical semigroup minimally generated by $\{ n_1, \ldots ,n_k \}$ such that $k = n_1$. For $a \in \mathbb{N}^d$, Define $S' = \langle n_1a, \ldots , n_k a\rangle$. Then $S'$ is a $\mathrm{PI}$-monoid in $\mathbb{N}^d$. Because since $k = n_1$, $S$ is a maximal embedding dimension numerical semigroup. Therefore by \cite[Corollary 3.13]{numerical}, there exist a numerical semigroup $T$ such that $n_1 \in T$ and $S = (n_1 + T) \cup \{0\}$. Suppose $T$ is minimally generated by $\{n_1', \ldots, n_{k'}'\}$. Thus, we have $S' = n_1a + \langle n_1'a, \ldots, n_{k'}'a \rangle \cup \{0\}$. Hence, $S'$ is a $\mathrm{PI}$-monoid in $\mathbb{N}^d$.
\end{remark}

\begin{example}
{\rm Let $S$ be a submonoid of $\mathbb{N}^2$, generated by the set
\[
\{(6,12),(8,16),(9,18), (10,20), (11,22), (13,26)\}.
\]
Observe that $(6,12) = inf_{\preceq_{\mathbb{N}^d}}(S\setminus \{0\}) = m(S)$. Let $T$ be the submonoid of $\mathbb{N}^2$ generated by $\{(2,4),(3,6)\}$. Then it is clear that $S \subseteq ((6,12) + T) \cup \{0\}$. To see $((6,12) + T) \cup \{0\}  \subseteq S$, it is sufficient to see that $(6,12) + \alpha (2,4) + \beta (3,6) \in S$ for $0 \leq \alpha \leq 2$ and $0 \leq \beta \leq 1$. This is obvious, thus $S = ((6,12) + T) \cup \{0\}$. Hence $S$ is a $\mathrm{PI}$-monoid.}
\end{example}

\begin{proposition}
Let $S \subseteq \mathbb{N}^d$ be a submonoid such that $m(S) \in S^*$. Then $S$ is $\mathrm{PI}$-monoid if and only if for all $x,y \in S^*$, $x + y - m(S) \in S^*$.
\end{proposition}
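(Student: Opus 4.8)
The plan is to prove the two implications separately, identifying in both directions the multiplicity $m(S)$ as the distinguished translating element. For the forward direction, suppose $S$ is a $\mathrm{PI}$-monoid, so $S = (a+T)\cup\{0\}$ for some submonoid $T \subseteq \mathbb{N}^d$ and some $a \in T \setminus \{0\}$. First I would argue that necessarily $a = m(S)$: since $0 \in T$ we have $a = a+0 \in S^*$, and for every $t \in T$ we have $t \succeq_{\mathbb{N}^d} 0$, hence $a+t \succeq_{\mathbb{N}^d} a$. Thus $a$ is a lower bound of $S^* = a+T$ that itself belongs to $S^*$, so $a$ is the minimum of $S^*$ and therefore $a = \inf_{\preceq_{\mathbb{N}^d}} S^* = m(S)$. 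Then for any $x,y \in S^*$ I write $x = m(S)+t_1$ and $y = m(S)+t_2$ with $t_1,t_2 \in T$; since $T$ is a monoid, $t_1+t_2 \in T$, and so $x+y-m(S) = m(S)+(t_1+t_2) \in m(S)+T = S^*$, as required.

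For the backward direction, assume $x+y-m(S) \in S^*$ for all $x,y \in S^*$, and set $T := (S^* - m(S)) \cup \{0\}$, where $S^* - m(S) = \{\, s - m(S) \mid s \in S^* \,\}$. I would check three points in turn. First, $T \subseteq \mathbb{N}^d$: because $m(S) = \inf_{\preceq_{\mathbb{N}^d}} S^*$ is in particular a lower bound of $S^*$, we have $m(S) \preceq_{\mathbb{N}^d} s$ for every $s \in S^*$, so each difference $s - m(S)$ lies in $\mathbb{N}^d$. Second, $T$ is a submonoid: clearly $0 \in T$, and for two nonzero elements $s_1 - m(S), s_2 - m(S) \in T$ the hypothesis gives $s_1 + s_2 - m(S) \in S^*$, whence $(s_1 - m(S)) + (s_2 - m(S)) = (s_1 + s_2 - m(S)) - m(S) \in T$, the mixed and zero cases being immediate. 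Third, $m(S) \in T \setminus \{0\}$: since $m(S) \in S^*$ and $S$ is closed under addition, $2\,m(S) \in S^*$, so $m(S) = 2\,m(S) - m(S) \in T$. Finally I would compute $m(S) + T = S^* \cup \{m(S)\} = S^*$ (using $m(S) \in S^*$), so that $(m(S)+T)\cup\{0\} = S^* \cup \{0\} = S$, exhibiting $S$ as a $\mathrm{PI}$-monoid with translating element $a = m(S) \in T \setminus \{0\}$.

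The statement itself is not deep, so there is no single hard computation; the two delicate conceptual points are both about pinning down the translating element. In the forward direction the work is recognizing that $a$ is \emph{forced} to equal $m(S)$ via the lower-bound argument, and in the backward direction the key is that the given closure condition $x+y-m(S) \in S^*$ is exactly the condition making $S^* - m(S)$ closed under addition. The one subtlety I expect to need care with is verifying that the chosen element $m(S)$ genuinely lies in the constructed $T$: this is where the standing hypothesis $m(S) \in S^*$ is essential, since it yields $2\,m(S) \in S^*$ and hence $m(S) \in T$; without it one would still obtain a submonoid $T$ with $m(S)+T \subseteq S^*$, but could not present $S$ in the required form $(a+T)\cup\{0\}$ with $a \in T \setminus \{0\}$.
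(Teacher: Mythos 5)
Your proof is correct, and the backward direction is essentially the paper's own argument: both set $T=(S^*-m(S))\cup\{0\}$, use the closure condition $x+y-m(S)\in S^*$ to see that $T$ is a submonoid, and recover $S=(m(S)+T)\cup\{0\}$. The forward direction, however, is a genuinely different and more self-contained route. The paper splits into cases according to whether $x,y$ lie in $\mathrm{Ap}(S,m(S))$ and, in the case where both do, invokes an external result (\cite[Corollary 39]{pfelements}) to write $x=m(S)+f$, $y=m(S)+f'$ with $f,f'\in\mathrm{PF}(S)$, concluding via the pseudo-Frobenius property that $f+f'+m(S)\in S^*$. You instead unpack the definition of a $\mathrm{PI}$-monoid directly: from $S=(a+T)\cup\{0\}$ you observe that $a$ is a lower bound of $S^*=a+T$ belonging to $S^*$, hence $a=m(S)$, and then $x+y-m(S)=m(S)+(t_1+t_2)\in m(S)+T$. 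This avoids any appeal to Ap\'{e}ry sets or pseudo-Frobenius elements, and as a bonus shows that the standing hypothesis $m(S)\in S^*$ is automatic in the forward direction (it is genuinely needed only for the converse, exactly where you flag it, to get $m(S)=2m(S)-m(S)\in T\setminus\{0\}$). Both arguments are valid; yours is more elementary, while the paper's fits the proposition into its surrounding framework of Ap\'{e}ry sets and $\mathrm{PF}(S)$.
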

\begin{proof}
Let $S$ be a PI-monoid. Suppose one of the $x$ or $y$ does not belong to the Ap\'{e}ry set of $m(S)$, then either $x-m(S) \in S$ or $y-m(S) \in S$, and hence $x+y-m(S) \in S^*$. Suppose $x$ and $y$ belong to the Ap\'{e}ry set of $m(S)$. By \cite[Corollary 39]{pfelements}, we can write $x = m(S) + f$ and $y = m(S)+f'$ for some $f,f' \in \mathrm{PF}(S)$. Therefore, $x+y-m(S) = m(S) + f + m(S) + f' - m(S) = f+f'+m(S) \in S^*$. Conversely, suppose, $x + y - m(S) \in S^*$, for all $x,y \in S^*$. Now, let $z \in S^*$, we can write $z = m(S)+z-m(S)$, therefore $S = m(S)+ (-m(S)+S^*) \cup \{0\}$. Since, for $-m(S)+x$ and $-m(S)+y$ from $-m(S)+S^*$, we have $-m(S)+x-m(S)+y = -m(S)+s$ for some $s \in S^*$. Therefore, $(-m(S)+S^*)$ is a submonoid of $\mathbb{N}^d$, and hence $S$ is a $\mathrm{PI}$-monoid.
\end{proof}

\begin{coro}\label{arf=pi}
Let $S$ be an Arf monoid such that $m(S) \in S^*$. Then $S$ is a $\mathrm{PI}$-monoid.
\end{coro}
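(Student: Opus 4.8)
The plan is to reduce the statement to the additive condition furnished by the preceding Proposition and then feed in the Arf property. Since $m(S)\in S^{*}$, the preceding Proposition tells us that $S$ is a $\mathrm{PI}$-monoid as soon as we verify that $x+y-m(S)\in S^{*}$ for all $x,y\in S^{*}$. Write $m=m(S)$. Because $m=\inf_{\preceq_{\mathbb{N}^d}}(S^{*})$ actually lies in $S^{*}$, it is the minimum of $S^{*}$, so $m\preceq_{\mathbb{N}^d} s$ for every $s\in S^{*}$; in particular $m\preceq_{\mathbb{N}^d}x$ and $m\preceq_{\mathbb{N}^d}y$. Componentwise this gives $x+y-m\succeq_{\mathbb{N}^d} m\neq 0$, so $x+y-m\in\mathbb{N}^d$ is automatically nonzero. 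Hence it suffices to prove the single membership $x+y-m\in S$, after which $x+y-m\in S^{*}$ is free.

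First I would dispose of the easy cases. If $x-m\in S$ (equivalently $x\notin\Ap(S,m)$), then $x-m\in\mathbb{N}^d$ since $m\preceq_{\mathbb{N}^d}x$, and $x+y-m=(x-m)+y\in S$ as a sum of two elements of $S$; symmetrically if $y-m\in S$. Thus we may assume $x,y\in\Ap(S,m)$. Next, if $x$ and $y$ are \emph{comparable}, say $x\preceq_{\mathbb{N}^d}y$, then $m\preceq_{\mathbb{N}^d}x\preceq_{\mathbb{N}^d}y$ is a chain in $S$, and the Arf property applied to this chain yields $x+y-m\in S$ directly. So the entire content is concentrated in the case where $x,y$ are incomparable elements of $\Ap(S,m)$.

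The hard part is exactly this incomparable case, where the chain-based Arf hypothesis does not apply to $x,y$ themselves. My proposed route is a descent: induct on $\sum_i\lvert x_i-y_i\rvert$ and try to replace the incomparable pair $(x,y)$ by a pair $(x',y')$ with $x'+y'=x+y$, both $x',y'\in S$, and strictly smaller disagreement, eventually reaching a comparable pair to which the previous paragraph applies; note that $x+y-m$ is unchanged under such a replacement. The Arf property is the engine producing new elements of $S$ to build $(x',y')$: for instance $m\preceq_{\mathbb{N}^d}x\preceq_{\mathbb{N}^d}x+y$ gives $2x+y-m\in S$ and $m\preceq_{\mathbb{N}^d}y\preceq_{\mathbb{N}^d}x+y$ gives $x+2y-m\in S$, and the rigidity of Arf monoids with a least element (e.g. the "ray-filling" phenomenon, whereby a single element $m\prec v$ forces an entire arithmetic progression $m,v,2v-m,3v-2m,\dots$ into $S$) should constrain the possible Apéry elements enough to carry out the step. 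I expect constructing this sum-preserving move toward comparability to be the main obstacle; an alternative would be to prove the structural statement that, for an Arf monoid with least element $m$, the "blow-up" $\{0\}\cup(S^{*}-m)$ is closed under addition, which is equivalent to what we want and may be more transparent to establish by induction on total degree. Either way, once the incomparable case is settled, combining it with the two reductions above gives $x+y-m\in S^{*}$ for all $x,y\in S^{*}$, and the preceding Proposition finishes the proof.
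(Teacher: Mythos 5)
Your reduction via the preceding Proposition and your treatment of the easy cases are correct: if one of $x,y$ lies outside $\Ap(S,m(S))$ the claim is immediate, and if $x,y$ are comparable then the chain $m(S)\preceq_{\mathbb{N}^d}x\preceq_{\mathbb{N}^d}y$ and the Arf property give $x+y-m(S)\in S$. This is in fact all that the paper's argument can deliver (the corollary is stated there without proof), so you have located the crux correctly. But the incomparable case is not merely ``the main obstacle'': it is where the statement fails. Your proposed descent has no mechanism forcing the pair to become comparable, and your equivalent reformulation --- that $\{0\}\cup(S^{*}-m(S))$ is closed under addition --- is false for Arf submonoids of $\mathbb{N}^d$ once $d\geq 2$.

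A counterexample: let $S=\{0\}\cup\left(\left((2,2)+\mathbb{N}^2\right)\setminus\{(3,3)\}\right)\subseteq\mathbb{N}^2$. This is a submonoid, since a sum of two nonzero elements has both coordinates at least $4$, and $m(S)=(2,2)\in S^{*}$. It is Arf: if $u\preceq_{\mathbb{N}^2}v\preceq_{\mathbb{N}^2}w$ in $S$ with $w\neq 0$, then $v+w-u\succeq_{\mathbb{N}^2}w\succeq_{\mathbb{N}^2}(2,2)$, so $v+w-u\notin S$ would force $v+w-u=(3,3)$; then $w_i\leq v_i+w_i-u_i=3$ for $i=1,2$, hence $u,v,w\in S\cap\{0,1,2,3\}^2=\{0,(2,2),(2,3),(3,2)\}$, and since $(2,3)$ and $(3,2)$ are incomparable every chain in this set omits one of them, so either all first coordinates or all second coordinates along the chain lie in $\{0,2\}$ and the corresponding coordinate of $v+w-u$ is even --- it cannot equal $3$. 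Nevertheless $x=(2,3)$ and $y=(3,2)$ are incomparable elements of $\Ap(S,(2,2))$ with $x+y-m(S)=(3,3)\notin S$; equivalently $S^{*}-(2,2)=\mathbb{N}^2\setminus\{(1,1)\}$ is not a monoid, so $S$ is not a $\mathrm{PI}$-monoid. (Compare the final example of Section~7, which deletes both $(3,2)$ and $(3,3)$ and therefore stays $\mathrm{PI}$.) So the corollary requires an additional hypothesis --- for instance that $\Ap(S,m(S))$ is totally ordered under $\preceq_{\mathbb{N}^d}$, or an Arf condition that also covers incomparable triples --- and no completion of your descent can get around this.
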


\begin{example}
{\rm 

\begin{figure}
    \centering
     \resizebox{8cm}{8cm}{%
    \begin{tikzpicture}
    \draw [blue!10,thick,fill=blue!10] (1.9,1.9) rectangle (7,7);
 \foreach \x in {0,...,6}
     { \draw[dotted] (0,\x)--(7,\x);
      \node at (-1,\x){$\x$};
      }
    \foreach \x in {0,...,6}
     { \draw[dotted](\x,0)--(\x,7);
       \node at (\x,-1){$\x$};
      }
    \draw[thick,->] (0,0)--(0,7);
     \draw[thick,->] (0,0)--(7,0);
    \draw[fill=green] (0,0) circle (0.1cm);
    \draw[fill=red] (3,3) circle (0.05cm); 
    \draw[fill=red] (3,2) circle (0.05cm); 
\end{tikzpicture}
}
    \caption{}
    \label{fig:my_label2}
\end{figure}
Let $S$ be the submonoid of $\mathbb{N}^2$ containing all the lattice points in the shaded blue region except the red points (see Figure \ref{fig:my_label2}). Clearly, $S$ is not $\mathcal{C}$-semigroup. In fact, it is not a finitely generated submonoid. Observe that $inf_{{\preceq}_{\mathbb{N}^2}} (S\setminus \{0\}) = (2,2)= m(S)$. Hence $m(S) \in S^*$. Let $x,y,z \in S$ such that $x \preceq_{\mathbb{N}^d} y \preceq_{\mathbb{N}^d} z$. If $y-x \in S$, we are done. If not then $y-x$ has to be either $(3,2)$ or $(3,3)$. If $y-x = (3,3)$ then for any $z \in S\setminus \{0\}$, we have $y+z-x \in S$. If $y - x = (3,2)$, then $y = (3,2)+x \preceq_{\mathbb{N}^d} z$. Therefore, $ (3+x_1, 2+x_2) \preceq_{\mathbb{N}^d} z$, implies $(3+x_1,2+x_2) \preceq_{\mathbb{N}^d} y+z-x$. Hence, $y+z-x \in S$ for all $x,y,z \in S$. Thus, $S$ is an Arf monoid. Hence, by Corollary \ref{arf=pi}, $S$ is a $\mathrm{PI}$-monoid.}
\end{example}

\bibliographystyle{plain}

\begin{thebibliography}{10}

\bibitem{barucci-froberg}
Valentina Barucci and Ralf Fr\"{o}berg.
\newblock One-dimensional almost {G}orenstein rings.
\newblock {\em J. Algebra}, 188(2):418--442, 1997.

\bibitem{op}
Om~Prakash Bhardwaj, Kriti Goel, and Indranath Sengupta.
\newblock Affine semigroups of maximal projective dimension.
\newblock {\em Collect. Math.}, 2022.

\bibitem{opfpsac}
Om~Prakash Bhardwaj, Kriti Goel, and Indranath Sengupta.
\newblock Affine semigroups of maximal projective dimension.
\newblock {\em S\'{e}m. Lothar. Combin.}, 86B, Art. 33, 10pp, 2022.

\bibitem{brunsaffinesemigroups}
Winfried Bruns, Joseph Gubeladze, and Ng\^{o}~Vi\^{e}t Trung.
\newblock Problems and algorithms for affine semigroups.
\newblock {\em Semigroup Forum}, 64(2):180--212, 2002.

\bibitem{eriksen}
Eivind Eriksen.
\newblock Differential operators on monomial curves.
\newblock {\em J. Algebra}, 264(1):186--198, 2003.

\bibitem{Wilfconjecture}
J.~I. Garc\'{\i}a-Garc\'{\i}a, D.~Mar\'{\i}n-Arag\'{o}n, and
  A.~Vigneron-Tenorio.
\newblock An extension of {W}ilf's conjecture to affine semigroups.
\newblock {\em Semigroup Forum}, 96(2):396--408, 2018.

\bibitem{pfelements}
J.~I. Garc\'{\i}a-Garc\'{\i}a, I.~Ojeda, J.~C. Rosales, and
  A.~Vigneron-Tenorio.
\newblock On pseudo-{F}robenius elements of submonoids of {$\Bbb N^d$}.
\newblock {\em Collect. Math.}, 71(1):189--204, 2020.

\bibitem{hema2019}
Philippe Gimenez and Hema Srinivasan.
\newblock The structure of the minimal free resolution of semigroup rings
  obtained by gluing.
\newblock {\em J. Pure Appl. Algebra}, 223(4):1411--1426, 2019.

\bibitem{hemagluingwhenandhow}
Philippe Gimenez and Hema Srinivasan.
\newblock Gluing semigroups: when and how.
\newblock {\em Semigroup Forum}, 101(3):603--618, 2020.

\bibitem{M2}
Daniel~R. Grayson and Michael~E. Stillman.
\newblock Macaulay2, a software system for research in algebraic geometry.
\newblock Available at \url{http://www.math.uiuc.edu/Macaulay2/}.

\bibitem{jafari}
R. Jafari and M. Yaghmaei.
\newblock Type and conductor of simplicial affine semigroups.
\newblock {\em J. Pure Appl. Algebra}, 226(3):Paper No. 106844, 19, 2022.

\bibitem{lipmanArf}
Joseph Lipman.
\newblock Stable ideals and {A}rf rings.
\newblock {\em Amer. J. Math.}, 93:649--685, 1971.

\bibitem{millersturmfels}
Ezra Miller and Bernd Sturmfels.
\newblock {\em Combinatorial commutative algebra}, volume 227 of {\em Graduate
  Texts in Mathematics}.
\newblock Springer-Verlag, New York, 2005.

\bibitem{numerical}
J.~C. Rosales and P.~A. Garc\'{\i}a-S\'{a}nchez.
\newblock {\em Numerical semigroups}, volume~20 of {\em Developments in
  Mathematics}.
\newblock Springer, New York, 2009.

\bibitem{rosalesArf}
J.~C. Rosales, P.~A. Garc\'{\i}a-S\'{a}nchez, J.~I. Garc\'{\i}a-Garc\'{\i}a,
  and M.~B. Branco.
\newblock Arf numerical semigroups.
\newblock {\em J. Algebra}, 276(1):3--12, 2004.

\bibitem{trung}
Ng\^{o}~Vi\^{e}t Trung.
\newblock Classification of the double projections of {V}eronese varieties.
\newblock {\em J. Math. Kyoto Univ.}, 22(4):567--581, 1982/83.

\bibitem{wilf}
Herbert~S. Wilf.
\newblock A circle-of-lights algorithm for the ``money-changing problem''.
\newblock {\em Amer. Math. Monthly}, 85(7):562--565, 1978.

\end{thebibliography}

\end{document}